\definecolor{otto}{RGB}{210,49,91} 
\definecolor{kiwi}{RGB}{3,80,133}  
\renewcommand{\ge}{\geqslant}
\renewcommand{\geq}{\geqslant}
\renewcommand{\le}{\leqslant}
\renewcommand{\leq}{\leqslant}
\newtheorem{Theorem}{Theorem}[]
\newtheorem{Lemma}[Theorem]{Lemma}
\newtheorem{Remark}[Theorem]{Remark}
\newtheorem*{Remark*}{Remark}
\newtheorem{Corollary}[Theorem]{Corollary}
\newtheorem{Example}{Example}
\newtheorem*{Abstract}{Abstract}
\newcounter{ex_mean}
\newcounter{ex_linear}
\newcounter{temp}
\newtheorem{Assumption}{Assumption}
\title{Asymptotic considerations in a Bayesian linear model with nonparametrically modelled time series innovations }
\author{  {Claudia Kirch$^{1,2}$}  {Alexander Meier$^{1}$} {Renate Meyer$^{3}$} {Yifu Tang$^{4}$}}
\begin{document}

\newcommand{\conj}[1]{\overline{#1}}
\newcommand{\adj}[1]{{#1}^*} 
\newcommand{\param}[1]{#1_{\operatorname{param}}}
\newcommand{\paramm}[2]{#1_{\operatorname{param};#2}}
\newcommand{\outerProduct}[1]{#1\adj{#1}}
\newcommand{\distr}{\sim}
\newcommand{\asEq}{\overset{\text{a.s.}}{=}}
\newcommand{\dEq}{\overset{d}{=}}
\newcommand{\iid}{\overset{\text{iid}}{\distr}}
\newcommand{\ind}{\overset{\text{ind.}}{\distr}}
\newcommand{\vech}{\operatorname{vec}}
\newcommand{\vecop}[1]{{{#1}}_{\operatorname{vec}}}
\newcommand{\veccop}[1]{{{#1}}_{\underline{\operatorname{vec}}}}
\newcommand{\diag}{\operatorname{diag}}
\newcommand{\Id}{\operatorname{Id}}
\newcommand{\Dir}{\operatorname{Dir}}
\newcommand{\tr}{\operatorname{tr}}
\newcommand{\block}{\mathcal B}
\newcommand{\etr}{\operatorname{etr}}
\newcommand{\lvy}{L\'{e}vy }
\newcommand{\frch}{Fr\'{e}chet }
\newcommand{\Exp}{\operatorname{Exp}}
\newcommand{\indi}{\mathds{1}}
\newcommand{\lleq}{\lesssim}
\newcommand{\ggeq}{\gtrsim}
\newcommand{\vardbtilde}[1]{\tilde{\raisebox{0pt}[0.85\height]{$\tilde{#1}$}}}
\newcommand{\matr}[1]{\bm{#1}} 
\newcommand{\matrt}[1]{\tilde{\bm{#1}}} 
\newcommand{\vect}[1]{\underline{#1}} 
\newcommand{\vectt}[1]{\underline{\tilde{#1}}} 
\newcommand{\vecth}[1]{\underline{\hat{#1}}} 
\newcommand{\ups}[1]{^{(#1)}}
\newcommand{\upss}[1]{^{[#1]}}
\newcommand{\cblue}{\color{blue}}
\newcommand{\cred}{\color{red}}

\newcommand{\modd}{\operatorname{mod}}
\newcommand{\MF}{\mathcal{MF}}

\newcommand{\E}{\mathrm E}
\newcommand{\Var}{\mathrm{Var}}
\newcommand{\Cov}{\mathrm{Cov}}
\newcommand{\DFT}{\mathcal F_n}
\newcommand{\iDFT}{\DFT^{-1}}

\maketitle

\begin{Abstract}
This paper  considers a semiparametric approach within the general Bayesian linear model where the innovations  consist of a stationary, mean zero Gaussian time series. While a parametric prior is specified for the linear model coefficients,
the autocovariance structure of the time series is modeled nonparametrically using
 a Bernstein-Gamma process prior for the spectral density function, the Fourier transform of the autocovariance function.
 When updating this joint prior 
 with Whittle's likelihood, 
a Bernstein-von-Mises result is established for the linear model coefficients  showing the asymptotic equivalence of the corresponding estimators to those obtained from frequentist pseudo-maximum-likelihood estimation under the Whittle likelihood.
Local asymptotic normality of the likelihood is shown, demonstrating that the marginal posterior distribution of the linear model coefficients 
shrinks at parametric rate towards the true value, and that the conditional posterior distribution of the spectral 
density contracts in the sup-norm, even in the case of a partially misspecified
linear model.

\end{Abstract}
\footnotetext[1]{Department of Mathematics, Otto-von-Guericke University,  Magdeburg, Germany. }
\footnotetext[2]{Corresponding author: \url{claudia.kirch@ovgu.de}.} 
\footnotetext[3]{Department of Statistics, University of Auckland, 38 Princes Street, Auckland 1010, New Zealand. \url{renate.meyer@auckland.ac.nz}.}
\footnotetext[4]{Department of Mathematics and Statistics, University of Otago, New Zealand. \url{yifu.tang@otago.ac.nz}.}

\textbf{Keywords:}  semi-parametric Bernstein-von Mises theorem, linear regression with time series error, Whittle likelihood, Bernstein-Gamma process

\textbf{MSc2010 subject classification:}  62M10 (Time series, auto-correlation, regression, etc. in statistics);
62G86 (Nonparametric inference and fuzziness);
62F15 (Bayesian inference)

\section{Introduction}
Bayesian nonparametric approaches to the analysis of stationary time series have received a lot of attention over the last two decades, both from an applied and theoretical perspective. The majority of these approaches aim at estimating the spectral density and are thus formulated in the frequency domain, utilizing the Whittle likelihood approximation and placing a nonparametric prior on the (log-) spectral density function, e.g.\ \cite{CarterKohn97,choudhuri,rousseau2012, Chopin13, cadonna2017,kirch2019, edwards2019, rao2021, russel2021}. For Bayesian parametric methods, the Bernstein-von-Mises (BvM) theorem  guarantees that frequentist maximum-likelihood and Bayesian inference procedures are asymptotically equal to the second order of approximation. But that is no longer true for Bayesian nonparametric models. One even needs to explicitly show posterior consistency, i.e.\ that the posterior distribution contracts around the true parameter value with increasing sample size.
Proofs of the consistency of the respective posterior distribution  of the spectral density have been provided by  \cite{choudhuri}, \cite{rousseau2012}, \cite{kirch2019} for zero-mean Gaussian time series and by \cite{TangYifu2023Pcft} for zero-mean non-Gaussian time series.

Arguably, (linear) regression models are among the most widely used statistical procedures where the standard assumption of independent innovations does not always hold in practical applications. More realistically, a mean-zero Gaussian time series can be  used to model the correlated innovations in such a regression model, as  for instance in \cite{CarterKohn97,DeyTanujit2018Btsr,JunYoonBae2022Beot,merkatas2022identification}. In such a semiparametric model, the aim is to estimate the regression coefficients jointly with the spectral density of the noise process, the Fourier transform of the autocovariance function. In this situation, the spectral density is of secondary interest and could be regarded as a nuisance parameter.  However, a proper and adapt modeling of the autocorrelation structure has the potential  to properly capture not only the first but also second-order properties.  A parametric model for the autocorrelation such as the classical assumption of i.i.d.\ Gaussian noise, even though first-order consistent, would over- or underestimate the variability if the parametric model is misspecified. This is problematic not only in view of understanding the uncertainty associated with the obtained estimator but also for understanding the uncertainty associated with corresponding predicted values. This can be overcome by using a nonparametric modeling of the error structure.

In this paper we consider the semiparametric general linear model with time series errors and estimate both linear regression coefficients and autocorrelation structure in the frequency domain, using a Bernstein-Gamma process prior for the spectral density. The main focus is on establishing a Bernstein-von-Mises result for the marginal posterior distribution of the regression coefficients. In particular, we show that we achieve the same contraction rate as with a parametric model for the autocovariance and derive the corresponding asymptotic variance of the procedure based on the Whittle likelihood approximation. For the special case of a general linear model with design matrix consisting of a vector of ones and time series errors, i.e.\ estimating the mean of a stationary time series, we can show that the asymptotic variance is the same as that of the maximum likelihood estimator based on the true likelihood. However, we also provide a counterexample where the variance of the estimator under the true likelihood differs from that under the Whittle likelihood so that here, too, the variability of the estimator is not properly captured by the Bayesian procedure. This shows, that one needs to be careful with the interpretation of the Bayesian results and, if possible, check validity on a case-by-case basis.

The remainder of the paper is organized as follows: In Section~\ref{sec_2}, we give a definition of the semipametric model including likelihood and prior specifications and state the Bernstein-von Mises theorem. Section~\ref{sec:simulation} provides results of a comprehensive simulation study and a case study. The proof of the Bernstein-von Mises theorem is given in Section~\ref{sec:proof} by first stating a general semiparametric Bernstein-von Mises theorem, which can be applied to obtain the main result of this paper by establishing a LAN property and joint posterior contraction rates. Technical lemmas are given in Appendix~\ref{sec:technical}.

\section{Semiparametric model and Bernstein-von-Mises theorem}\label{sec_2}
In this section, we propose a Bayesian procedure for a linear model with time-dependent errors, where we do not use a parametric time series model. Instead, we use a Whittle-likelihood approximation of the time-dependent errors in combination with a Bernstein-Gamma process prior for the spectral density. We show consistency and parametric contraction rates for the linear regression coefficients and derive their asymptotic variance as well as a Bernstein-von-Mises theorem.

\emph{Notation:}
We denote the Euclidean norm of a vector~$\vect x$ by~$\|\vect x\|$.
Similarly, for a matrix~$\matr A$, we denote by~$\|\matr A\|$ the Euclidean
matrix norm (i.e.~the largest singular value which is given by $\sqrt{\lambda_{max}(AA^T)}$) and by~$\|\matr A\|_F$ the Frobenius norm. Let  $S_n^+$ denote the space of symmetric positive
definite (spd)~$n\times n$ matrices.
For a $\matr B\in S_n^+$, we denote by~$\matr B^{1/2}$ the (unique) spd matrix
square root. Furthermore, we denote $x_n \lleq y_n$, if there exists a constant $C>0$ such that $x_n\le C y_n$. Similarly, $x_n\ggeq y_n$, if there exists a constant $c>0$ such that $x_n\ge c y_n$. We denote the minimal respectively maximal eigenvalue of a matrix $\matr A$ by $\lambda_{\min}(\matr A)$ respectively $\lambda_{\max}(\matr A)$. A lower index $j$ behind a vector refers to its $j$th component.

\subsection{Linear model with nonparametric time series errors}\label{subsec:definition}
Consider the following linear model 
\begin{align*}
	Z_t = \vect x_t^T\vect\theta + e_t, \quad\text{for } t=1,\ldots,n, \end{align*}

Unlike in a classical setting, we do not assume independence of the innovations~$\{e_t\}$, instead, we allow for time-dependency by modelling them as a stationary mean zero Gaussian time series
with covariance matrix~$\matr\Sigma_n\in\mathcal S_n^+$.  Equivalently, this model can be stated in  vectorized notation
\begin{align}\label{eq:linearModelTS}
  \vect Z_n = \matr X_n \vect\theta + \vect e_n, \quad \vect e_n \sim N_n(\vect 0, \matr\Sigma_n),
\end{align}
with design matrix~$\matr X_n \in \mathbb R^{n\times r}$.

Corresponding methodology can also be developed for a multivariate regression model by replacing scalar calculations by corresponding matrix-vector calculations similarly to \cite{meierPaper}. For simplicity of presentation we concentrate on the univariate situation here.

We make the following assumptions on the regression part of the model:

\begin{Assumption}\label{ass:designMatrix}
The design matrix~$\matr X_n\in\mathbb R^{n\times r}$ is of full rank for~$n$ sufficiently large.
Furthermore, we assume that $0<\liminf_{n\to\infty}\frac{1}{n}\lambda_{\min}(\matr X_n^T\matr X_n)\leqslant \limsup_{n\to\infty}\frac{1}{n}\lambda_{\max}(\matr X_n^T\matr X_n)<\infty$. 
\end{Assumption}
Assumption~\ref{ass:designMatrix} implies that $\lambda_{\min}(\matr X_n^T \matr X_n)\to\infty$ and $\|\matr X_n\|=o(\exp( n^{\delta}))$ for every~$\delta>0$ as~$n\to\infty$. These implications are useful in the proof of the main result. 

Often, in regression analysis, the  design matrix 
fulfils
\begin{align}\label{stand_ass_regr}
	\frac{1}{n}\matr X_n^T\matr X_n\to\matr V \quad \text{for some matrix }\matr V\in\mathcal S_r^+.
\end{align}
In this case, Assumption~\ref{ass:designMatrix} holds since by the continuity of eigenvalues,  $\matr X_n^T\matr X_n$ is positive definite and has rank $r$ for  sufficiently large $n$,
\begin{align*}
	\frac{1}{n}\lambda_{\max}(\matr X_n^T\matr X_n)\lleq \lambda_{\max}(\matr V)<\infty
\end{align*}
and similarly
\begin{align*}
	\frac{1}{n}\lambda_{\min}(\matr X_n^T\matr X_n)\ggeq \lambda_{\min}(\matr V)>0.
\end{align*}

Additionally, we make the following assumption on the time series:
\begin{Assumption}\label{ass:fModel}
The true spectral density~$f_0$ is uniformly bounded and uniformly bounded away from~0.
The corresponding autocovariance function~$\gamma_0(h)=\int_0^{2\pi}f_0(\omega)\exp(ih\omega)d\omega$ fulfils
\[
  \sum_{h\geq 0}|\gamma_0(h)|\,h^a<\infty
\]
for some~$1<a\leqslant2$.
\end{Assumption}
This is an assumption on the decay of the autocovariance function of the time series and implies that the spectral density is differentiable with $(a-1)$-H\"older-continuous derivative (see e.g. Remark 8.5 in \cite{Serov}).
Under this assumption the Whittle likelihood  below  and the true likelihood are mutually contiguous. Contiguity is useful to prove posterior consistency of the spectral density. Using different proof techniques as e.g.\ in \cite{TangYifu2023Pcft} the assumption on the time series can be relaxed.

We now give some important examples of such linear models in the context of time series analysis:
\begin{Example}[Time series with unknown mean]\label{ex_mean_model}
The simplest situation covered by this methodology deals with Bayesian inference about the mean of a time series with no parametric assumptions on the structure of the time series. In this case, $Z_t=\mu+e_t$, i.e.\ $\matr X_n=(1,\ldots,1)^T\in\mathbb R^{n\times 1}$
and~$\theta=\mu$. Furthermore, we observe that~$\matr X_n^T\matr X_n=n$ such that \eqref{stand_ass_regr} and thus
Assumption~\ref{ass:designMatrix} are clearly fulfilled.
\end{Example}
\setcounter{ex_mean}{\value{Example}-1}
\begin{Example}[Time series with unknown linear trend]\label{ex_linear}
In the linear trend model~$Z_t=\mu+\frac{bt}{n}+e_t$, $t=1,\ldots,n$, we are interested in Bayesian inference for a linear trend in a time series.
In this case, it holds~$\vect\theta=(\mu,b)^T$ and
\[
  \matr X_n=\begin{pmatrix}
    1 & 1 & \ldots & 1 \\
    \frac{1}{n} & \frac{2}{n} & \ldots & 1
  \end{pmatrix}^T, \quad
  \matr X_n^T\matr X_n=\begin{pmatrix}
    n & \frac{n+1}{2} \\
    \frac{n+1}{2} & \frac{(n+1)(2n+1)}{6n}
  \end{pmatrix}.
\]
Clearly,
\[
  \frac{1}{n} \matr X_n^T\matr X_n \to \begin{pmatrix} 1 & \frac{1}{2} \\ \frac{1}{2} & \frac{1}{3} \end{pmatrix} \in \mathcal S_2^+,
\quad n \to \infty,
\]
such that \eqref{stand_ass_regr} and thus
 Assumption~\ref{ass:designMatrix} are fulfilled.
\end{Example}

\setcounter{ex_linear}{\value{Example}-1}
\begin{Example}[Time series with unknown linear trend and seasonality]\label{example:sesonality}
A time series model with linear trend and seasonality can be written as $Z_t=\mu + b\frac{t}{n}+\sum_{l=1}^{p-1} s_l1_{\{t\in M_l\}} + e_t$, where $p$ is the period, $M_l=\{l+i\cdot p: i=0,1,2,\ldots\}$ and $\vect\theta=(\mu,b,s_1,\cdots,s_{p-1})^T$. The last seasonal component has been omitted due to identifiability with the intercept and for $p=1$ the model coincides with Example~\ref{ex_linear}. Thus, the design matrix is given by $\matr X_n=(\mathbf{y}_0,\mathbf{y}_1,\ldots,\mathbf{y}_p)$, where $\mathbf{y}_0^T=(1,1,\ldots,1)$, $\mathbf{y}_1^T=(1,2,\ldots,n)/n$ and $\mathbf{y}_l$, $l=1,\ldots,p-1$, are vectors with entries of $1$ at the positions $l+i\cdot p$, $i\ge 0$, and $0$ at all other positions. Some calculations yield
\begin{align*}
    & \frac{1}{n} \matr X_n^T\matr X_n \to \begin{pmatrix} 
    1& \frac{1}{2} &\frac{1}{p} &\ldots &\ldots&\frac{1}{p} \\[1mm] 
    \frac{1}{2} & \frac{1}{3}& \frac{1}{2p}&\ldots &\ldots&\frac{1}{2p} \\[1mm]
    \frac{1}{p}&  \frac{1}{2p} &\frac{1}{p}& 0& \ldots&0 \\
     \vdots & \vdots&0&\ddots&\ddots&0 \\
\vdots & \vdots&\vdots&\ddots&\ddots&0 \\
\frac{1}{p} & \frac{1}{2p}& 0&\ldots&\ldots&\frac 1 p
    \end{pmatrix} \in \mathcal S_{p+1}^+,
\quad n \to \infty,
\end{align*}
which can be seen to be positive definite by calculating the determinant using a result for partitioned matrices given e.g.\ in 
\cite{Bernstein20181}, Prop.\ 3.9.4. Consequently, \eqref{stand_ass_regr} and thus
 Assumption~\ref{ass:designMatrix} are fulfilled.

\end{Example}
\subsection{Pseudo-Likelihood and Prior specification}\label{section_prior}
Modelling the distribution of the Gaussian stationary time series errors nonparametrically requires specifying a nonparametric prior on the space of symmetric,  positive semidefinite covariance matrices. Furthermore, the evaluation of the Gaussian likelihood subsequently requires the inversion of a high-dimensional covariance matrix which is computationally expensive. In the literature, this is often avoided by making use of the so-called Whittle likelihood, which is based on the observation that 
a Fourier transform of the original time series observations  leads to (in a certain sense) asymptotically independent complex Gaussians with variances equal to the spectral density evaluated at the Fourier frequencies.  The corresponding approximate Whittle likelihood is misspecified  but asymptotically contiguous to the true likelihood under the Assumption 2 as shown by \cite{choudhuri}.

More explicitly,  Whittle's likelihood~$P_W^n$ is formulated
in terms of the Fourier coefficients~$\vectt e_n \in \mathbb R^n$ of the innovations
as obtained via the transformation~$\vectt e_n = \matr F_n \vect e_n$,
where~$\matr F_n\in\mathbb R^{n\times n}$ is the (orthogonal) discrete
Fourier transformation matrix, defined in \eqref{eq_formular_Fn} below,
see Section~2.1 in~\cite{kirch2019} for more details.
Whittle's likelihood for the observations~$\vect Z_n$ can then be written in terms 
of the following Lebesgue density (see (8.3) in~\cite{meier} for more details)
\begin{equation}\label{eq:whittleLinearModel}
  p_W^n(\vect z_n|\vect\theta,f) =
  \frac{1}{\sqrt{(2\pi)^n|\matr D_n|}}\exp\left( -\frac{1}{2}\left( \vectt z_n-\matrt X_n\vect\theta \right)^T \matr D_n^{-1} \left( \vectt z_n-\matrt X_n\vect\theta \right) \right)
\end{equation}
with diagonal frequency domain covariance matrix~$\matr D_n:=\matr D_n(f)$ defined by 
\begin{equation}\label{eq_def_Dn}
	\matr D_n=\matr D_n(f):=2\pi\,\begin{cases}
	\text{diag}(f(0),f(\lambda_1),f(\lambda_1),\ldots,f(\lambda_N),f(\lambda_N),f(\lambda_{n/2}))& n \text{ even},\\
	\text{diag}(f(0),f(\lambda_1),f(\lambda_1),\ldots,f(\lambda_N),f(\lambda_N))& n \text{ odd}.
\end{cases}
\end{equation}
with  Fourier frequencies $\lambda_j=\frac{2\pi j}{n}$, for $
j=0,\ldots,N=\lfloor(n-1)/2 \rfloor$,
 the frequency domain design matrix~$\matrt X_n:=\matr F_n\matr X_n$
and the frequency domain observations~$\vectt Z_n=\matr F_n\vect Z_n$, where
\begin{align}\label{eq_formular_Fn}
		F_n=\begin{cases}
		(\mathbf{e}_0,\mathbf{c}_1,\mathbf{s}_1,\ldots,\mathbf{c}_N,\mathbf{s}_N,\mathbf{e}_{n/2})^T, & n\text{ even},\\
		(\mathbf{e}_0,\mathbf{c}_1,\mathbf{s}_1,\ldots,\mathbf{c}_N,\mathbf{s}_N)^T, & n\text{ odd},
	\end{cases}
\end{align}
where $\mathbf{c}_j=\sqrt{2} \Re \mathbf{e}_j$ (real part), $\mathbf{s}_j=\sqrt{2}\Im \mathbf{e}_j$ (imaginary part) and 
$\mathbf{e}_j=n^{-1/2}(e_j,e_j^2,\ldots,e_j^n)^T$ with $e_j=\exp(-2\pi i j/n)$.

\par
The Bayesian model specification
is completed by specifying a prior for the linear model coefficient and innovation
covariance structure, which will be modeled in terms of the spectral density function~$f$.
We consider an independent prior of the form~$P(d\vect\theta,df)=P(d\vect\theta)P(df)$.
For the spectral density, we will employ a Bernstein-Gamma prior, a special case of the Bernstein-Hpd-matrix-Gamma prior, developed for multivariate time series  in
\cite{meierPaper} and used here for the univariate case. The main idea is  to represent the spectral density by a positive linear combination of Bernstein polynomials (based on Weierstrass approximation theorem) and putting a prior on the positive weights and the degree of Bernstein polynomials. Note that suitably normalized, the Bernstein polynomials on the interval $[0,1]$ are Beta densities.

For~$0<\tau_0<\tau_1<\infty$, consider the set
\begin{equation}\label{eq:truncationSetForBvm}
  \mathcal C_{\tau_0,\tau_1} := \left\{ f \in \mathcal C^1 \colon \|f^{-1}\|_\infty \leq \tau_0^{-1}, \|f\|_L\leq\tau_1 \right\}
\end{equation}
with the Lipschitz norm~$\|f\|_L:=\|f\|_\infty+\|\Delta f\|_\infty$,
where~$\Delta f(\omega_1,\omega_2):=|f(\omega_1)-f(\omega_2)|/|\omega_1-\omega_2|$
for~$\omega_1\neq\omega_2$.
Recall the univariate Gamma process~$\operatorname{GP}(\alpha,\beta)$ on~$\mathcal X=[0,\pi]$ with~$\alpha,\beta\colon [0,\pi]\to (0,\infty)$ (see e.g.~(1.9) in~\cite{meier}).
Let~$I_{j,k}:=( (j-1)\pi/k,j\pi/k ]$ for~$j=1,\ldots,k$ denote the equidistant partition of~$[0,\pi]$ of size~$k$
and denote by~$b(\cdot|j,l)$ the probability density function on~$[0,1]$
of the~$\operatorname{Beta}(j,l)$ distribution. Let $P(df)$ denote 
the Bernstein-Gamma prior for~$f$ defined  by:
\begin{align*}
  f(\omega)= f(\omega|\Phi, k) :=\sum_{j=1}^k\Phi(I_{j,k})b(\omega/\pi|j,k-j+1), \quad 0 \leq \omega \leq \pi, \\
  \Phi \sim \operatorname{GP}(\alpha,\beta), \quad k \sim p(k).
\end{align*}
The restriction of~$P(df)$ to~$\mathcal C_{\tau_0,\tau_1}$ will be denoted  by~$P_{\tau_0,\tau_1}(df)$:
\begin{equation}\label{eq:truncationForBvMSet}
  P_{\tau_0,\tau_1}(F) := \frac{P(F\cap\mathcal C_{\tau_0,\tau_1})}{P(\mathcal C_{\tau_0,\tau_1})}, \quad F \subset \mathcal C^1 \text{ measurable}.
\end{equation}
The joint prior for~$(\vect\theta,f)$ will be denoted 
by~$P_{\tau_0,\tau_1}(d\vect\theta,df):=P(d\vect\theta)P_{\tau_0,\tau_1}(df)$.
We will denote by~$P_{W;\tau_0,\tau_1}^n(d\vect\theta,df|\vect Z_n)$ the pseudo posterior
for~$(\vect\theta,f)$
obtained when updating~$P_{\tau_0,\tau_1}$ with Whittle's likelihood~$P_W^n$ from~\eqref{eq:whittleLinearModel}.

We assume that the prior for $\vect\theta$ is continuous, has moments up to a certain order, and its support includes the true parameter vector $\vect\theta_0$, i.e., more specifically: 
\begin{Assumption}\label{ass:thetaPrior}
	The prior~$P(d\vect\theta)$ on~$\vect\theta$ is such that $\E \|\vect\theta\|^{\nu} <\infty $ for some $\nu>0$. It possesses a Lebesgue density~$g$ on~$\mathbb R^r$
fulfilling~$g(\vect\theta)\leq C$ (for some $C>0$) and is continuous and strictly positive in an open
neighbourhood of~$\vect\theta_0$ .
\end{Assumption}

Furthermore, we need the following two assumptions about the hyperparameters of the nonparametric prior on the spectral density function $f$:

\begin{Assumption}\label{ass:GPBVM}
  For the Gamma process parameters~$\alpha,\beta\colon [0,\pi]\to (0,\infty)$,
  there exist positive constants~$g_0,g_1$ such that~$g_0 \leq \alpha(x),\beta(x) \leq g_1$
  holds for all~$x \in [0,\pi] \setminus N$, where~$N$ is a Lebesgue null set.
\end{Assumption}

\begin{Assumption}\label{ass:kPrior}
There exist positive constants~$A_1,A_2,\kappa_1,\kappa_2$ such that
the prior probability mass function $p(\cdot)$ of the Bernstein polynomial degree~$k$ fulfils
\[
  A_1\exp(-\kappa_1j\log j)\leq p(j)\leq A_2\exp(-\kappa_2j)
\]
for~$j\in\mathbb N$.\end{Assumption}

Assumption \ref{ass:GPBVM} is needed to ensure that any small neighbourhood of the true spectral density has positive prior probability.  Assumption \ref{ass:kPrior} ensures that the prior on $k$ has full support on $\mathbb N$ and that the decrease of tail probabilities is bounded.
\subsection{Bernstein-von-Mises result}
We are now ready to state a Bernstein-von-Mises result for the coefficient of the linear part of the model.

\begin{Theorem}\label{eq:bvmLinearModel}
	Let \eqref{eq:linearModelTS} hold with fixed design matrix~$\matr X_n$ fulfilling Assumption~\ref{ass:designMatrix}
and stationary mean zero Gaussian innovations $\{e_t\}$ with
spectral density~$f_0$ fulfilling Assumption~\ref{ass:fModel}.
Denote the true distribution of~$\vect Z_n$ by~$P_0^n$ and let $0<\tau_0<\min_{\omega}f_0(\omega)\le \max_{\omega}f_0(\omega)<\tau_1<\infty$. Furthermore, let the prior specification hold as in Section~\ref{section_prior} fulfilling Assumptions~\ref{ass:thetaPrior}--\ref{ass:kPrior}.
Then, it holds, as $n\to\infty$, in $P_0$-probability
\begin{align*}
	&\delta_{\operatorname{TV}}\left[ P_{W;\tau_0,\tau_1}^n(\vect\theta\in\cdot|\vect Z_n), N_r\left(\matr V_{W;n}\, \matrt X_n^T\matr D_{n,0}^{-1} \vectt Z_{n}, \matr V_{W;n}  \right) \right] \to 0,\\
 &\text{where }\matr V_{W;n}=\left( \matrt X_n^T\matr D_{n,0}^{-1}\matrt X_n \right)^{-1},\quad \matr D_{n,0}=\matr D_n(f_0)
\end{align*}
and
$\delta_{\operatorname{TV}}$ denotes the total variation
distance. \end{Theorem}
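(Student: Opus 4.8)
The plan is to deduce Theorem~\ref{eq:bvmLinearModel} from a general semiparametric Bernstein--von Mises theorem whose hypotheses are a local asymptotic normality (LAN) expansion of the Whittle pseudo-likelihood in the $\vect\theta$-direction together with a joint posterior contraction rate for $(\vect\theta,f)$ around $(\vect\theta_0,f_0)$. The mechanism I would make explicit is the disintegration $P_{W;\tau_0,\tau_1}^n(\vect\theta\in\cdot|\vect Z_n)=\int P_{W;\tau_0,\tau_1}^n(\vect\theta\in\cdot|f,\vect Z_n)\,P_{W;\tau_0,\tau_1}^n(df|\vect Z_n)$, which represents the marginal $\vect\theta$-posterior as a mixture, over the $f$-posterior, of the conditional $\vect\theta$-posteriors given $f$. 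Since the total variation distance to a fixed measure is convex, it suffices to control $\delta_{\operatorname{TV}}(P_{W;\tau_0,\tau_1}^n(\vect\theta\in\cdot|f,\vect Z_n),N_r(\widehat{\vect\theta}(f_0),\matr V_{W;n}))$ integrated against the $f$-posterior, where I abbreviate $\matr I_n(f):=\matrt X_n^T\matr D_n(f)^{-1}\matrt X_n$ and $\widehat{\vect\theta}(f):=\matr I_n(f)^{-1}\matrt X_n^T\matr D_n(f)^{-1}\vectt Z_n$, so that $\matr V_{W;n}=\matr I_n(f_0)^{-1}$ and the theorem's centering is exactly $\widehat{\vect\theta}(f_0)$. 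Splitting the integral at the sup-norm ball $B_n=\{f\colon\|f-f_0\|_\infty\le\epsilon_n\}$, the complement contributes $o_P(1)$ because the $f$-posterior of $B_n^c$ vanishes by contraction and total variation is bounded by one, so it remains to bound the conditional distance uniformly over $f\in B_n$.

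For fixed $f\in\mathcal C_{\tau_0,\tau_1}$ the Whittle log-likelihood \eqref{eq:whittleLinearModel} is exactly quadratic in $\vect\theta$, so LAN holds with no remainder: writing $\vectt Z_n=\matrt X_n\vect\theta_0+\vectt e_n$ gives $\widehat{\vect\theta}(f)=\vect\theta_0+\matr I_n(f)^{-1}\matrt X_n^T\matr D_n(f)^{-1}\vectt e_n$, and the conditional posterior density is proportional to $g(\vect\theta)\exp(-\tfrac12(\vect\theta-\widehat{\vect\theta}(f))^T\matr I_n(f)(\vect\theta-\widehat{\vect\theta}(f)))$. Because Assumption~\ref{ass:designMatrix} forces $\lambda_{\min}(\matr I_n(f))\ggeq\lambda_{\min}(\matr X_n^T\matr X_n)/\tau_1\to\infty$, the posterior concentrates on an $O(n^{-1/2})$-neighbourhood of $\widehat{\vect\theta}(f)$; on this shrinking set the continuous, strictly positive prior density of Assumption~\ref{ass:thetaPrior} is asymptotically constant, while the bound $g\le C$ and the moment condition $\E\|\vect\theta\|^{\nu}<\infty$ render the contribution of the tails to the normalising constant negligible. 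A standard Laplace/Bernstein--von Mises argument then yields $\delta_{\operatorname{TV}}(P_{W;\tau_0,\tau_1}^n(\vect\theta\in\cdot|f,\vect Z_n),N_r(\widehat{\vect\theta}(f),\matr I_n(f)^{-1}))\to0$, with bounds depending on $f$ only through $\tau_0,\tau_1$ and hence uniform over $f\in B_n$.

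It then remains to compare the two Gaussians $N_r(\widehat{\vect\theta}(f),\matr I_n(f)^{-1})$ and $N_r(\widehat{\vect\theta}(f_0),\matr I_n(f_0)^{-1})$ for $f\in B_n$. Since $f,f_0\ggeq\tau_0$, the diagonal matrices obey $\|\matr D_n(f)^{-1}-\matr D_{n,0}^{-1}\|\lleq\|f-f_0\|_\infty\le\epsilon_n$; combining this with $\|\matrt X_n\|=\|\matr X_n\|\lleq\sqrt n$ and $\|\matr I_n(f_0)^{-1}\|\lleq n^{-1}$ (both from Assumption~\ref{ass:designMatrix}) gives $\|\matr I_n(f_0)^{-1/2}\matr I_n(f)\matr I_n(f_0)^{-1/2}-\Id\|\lleq\epsilon_n$, so the covariances match, and decomposing $\widehat{\vect\theta}(f)-\widehat{\vect\theta}(f_0)$ reduces the centering discrepancy to the stochastic term $\matr I_n(f_0)^{-1/2}\matrt X_n^T(\matr D_n(f)^{-1}-\matr D_{n,0}^{-1})\vectt e_n$. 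For each fixed $f\in B_n$ this is a linear functional of the Gaussian $\vectt e_n$ whose $P_0$-variance is $\lleq\epsilon_n^2$, using $\tr(\matr X_n^T\matr X_n)\lleq n$ and the operator-norm boundedness of $\Cov(\vectt e_n)=\matr F_n\matr\Sigma_n\matr F_n^T$ (a consequence of $f_0$ being bounded), hence it is $O_P(\epsilon_n)$. Feeding these two $O(\epsilon_n)$ estimates into the closed-form total variation bound between Gaussians completes, together with the previous paragraph, the estimate on $B_n$.

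The main obstacle is twofold. First, one must genuinely establish sup-norm contraction of the $f$-posterior to $f_0$ at some rate $\epsilon_n\to0$; this is the nonparametric core, proved by the usual prior-mass-plus-testing machinery for the Bernstein--Gamma prior, where Assumptions~\ref{ass:GPBVM}--\ref{ass:kPrior} supply prior mass near $f_0$ and the restriction to the Lipschitz ball $\mathcal C_{\tau_0,\tau_1}$ controls the entropy, with all statements transported from the misspecified Whittle model to $P_0$-probability via the contiguity granted by Assumption~\ref{ass:fModel}. Second, and more delicate, the per-$f$ variance bound above must be upgraded to hold uniformly over the random $f$-posterior support: a crude supremum over all diagonal perturbations of size $\epsilon_n$ produces an $\ell^1$ factor and loses a power of $\sqrt n$, which would force the unrealistic requirement $\epsilon_n=o(n^{-1/2})$. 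One must therefore exploit the Lipschitz regularity built into $\mathcal C_{\tau_0,\tau_1}$ --- the entries $\tfrac1{2\pi}(1/f(\lambda_j)-1/f_0(\lambda_j))$ vary smoothly across adjacent Fourier frequencies --- either through a chaining argument for the Gaussian process $f\mapsto\matr I_n(f_0)^{-1/2}\matrt X_n^T(\matr D_n(f)^{-1}-\matr D_{n,0}^{-1})\vectt e_n$ over $B_n$, or through a second-moment bound combined with an evidence lower bound for the $f$-posterior. This nuisance-insensitivity of the centering is precisely the ``no-bias'' condition implicit in the general semiparametric theorem, and it is where the bulk of the technical work lies.
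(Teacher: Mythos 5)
Your plan is correct, and every technical ingredient you identify is one the paper actually proves, but you organize them along a genuinely different route. The paper funnels everything through a general semiparametric BvM theorem (Theorem~\ref{th:bvmMultiGeneral}, a vector-valued variant of Ghosal--van der Vaart's Theorem~12.8): it verifies a LAN expansion of the Whittle likelihood in $\vect\theta$ with remainder $R_n(\vect\theta,f)$ controlled uniformly over $\Theta_n\times\mathcal H_n$ (Theorem~\ref{lemma:LANresult}), plus joint posterior concentration on $\Theta_n\times\mathcal H_n$ (Theorem~\ref{th:jointContr}, assembled from Hellinger contraction, a Hellinger-to-sup-norm lower bound, and marginal parametric-rate contraction for $\vect\theta$). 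You instead disintegrate the marginal posterior over the $f$-posterior and use convexity of total variation, comparing each conditional Gaussian $N_r\bigl(\widehat{\vect\theta}(f),\matr I_n(f)^{-1}\bigr)$ to the common target $N_r\bigl(\widehat{\vect\theta}(f_0),\matr V_{W;n}\bigr)$ --- essentially the Bickel--Kleijn route that the paper explicitly flags as an alternative. The two schemes rest on the same estimates: your centering discrepancy $\matr I_n(f_0)^{-1/2}\matrt X_n^T\bigl(\matr D_n(f)^{-1}-\matr D_{n,0}^{-1}\bigr)\vectt e_n$ is exactly the remainder term $R_{n,1}$ in the paper's LAN proof; your proposed chaining argument is precisely Lemma~\ref{lemma:uniform_convergence} (Dudley's bound for the Gaussian process $f\mapsto \matr W_n(f)\vectt e_n$, with sup-norm entropy of $\mathcal C_{\tau_0,\tau_1}$ controlled by the Lipschitz-ball restriction); your sup-norm contraction of the $f$-posterior is Theorem~\ref{lemma:jointHellinger} combined with Corollary~\ref{lemma:hellingerFromBelowBvm}; and your Laplace step for the non-flat prior $g$ corresponds to the flat-prior conditional Gaussian of Lemma~\ref{lemma:marginalParametricFlat} together with the evidence-ratio argument of Lemma~\ref{lemma:marginalParametric}. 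What your route buys is transparency: the exact quadraticity of the Whittle likelihood in $\vect\theta$ makes the conditional posteriors explicitly Gaussian, so no abstract LAN bookkeeping is needed. What the paper's route buys is modularity: the general theorem, and the misspecification observation in Remark~\ref{rem_BvM_misspecified}, can be reused beyond the Gaussian/Whittle setting. Both routes rely equally on contiguity (via Assumption~\ref{ass:fModel}) to transport the final statement from the Whittle measure to $P_0^n$.
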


Because $\matr V_{W;n}$ typically contracts like $1/n$ by Assumptions \ref{ass:designMatrix} or by \eqref{stand_ass_regr}, the theorem shows that we have contraction at parametric rate (compare also Section~\ref{sec:marginalParametric}).

\subsubsection{Asymptotic second-order correctness}
\label{rem_whittle_ML}
	The quantity $\matr V_{W;n}\, \matrt X_n^T\matr D_{n,0}^{-1} \vectt Z_{n}$ in Theorem~\ref{eq:bvmLinearModel} is the pseudo maximum likelihood estimator under the Whittle likelihood with known spectral density, which is a special case of weighted least squares estimation. As a frequentist estimator this quantity is unbiased for $\vect\theta_0$ and has a normal distribution with covariance $\matr V_{W;n}$ under the Whittle likelihood. This shows that the posterior covariance matrix is asymptotically equivalent  to the one of the pseudo maximum likelihood estimator  under the Whittle likelihood.
 
In contrast, the covariance of the latter estimator under the true likelihood is given by 
\begin{align}\label{eq_cov_true}
	&\matr V_{0;n}=\left( \matrt X_n^T\matr D_{n,0}^{-1}\matrt X_n \right)^{-1}\,\matrt X_n^T\matr D_{n,0}^{-1}\matrt \Sigma_n
	\matr D_{n,0}^{-1}\matrt X_n\,\left( \matrt X_n^T\matr D_{n,0}^{-1}\matrt X_n \right)^{-1},
\end{align}
where $\matrt\Sigma_n=\matr F_n \matr\Sigma_n\matr F_n^T$. 

If $\matr V_{0;n}$ and $\matr V_{W;n}$ are asymptotically equivalent in the sense of 
\begin{align}\label{eq_second_order}
\|\matr V_{W;n}^{-1/2}\matr V_{0;n}\matr V_{W;n}^{-1/2}-\mbox{Id}\|_F=o(1),
\end{align}
then the total variation distance between two Gaussians with the same mean and $\matr V_{0;n}$ respectively $\matr V_{W;n}$ as covariance matrices converges to zero. Indeed, this is not only a sufficient but also a necessary condition because the minimum of $1$ and the left hand side of \eqref{eq_second_order}
 upper and lower bounds the total variation distance  (with appropriate factors) between such two multivariate Gaussians (see e.g.\ \cite{frob}).
 
Consequently,  under \eqref{eq_second_order},
we can replace the statement in Theorem~\ref{eq:bvmLinearModel} by a corresponding statement with $N_r\left(\matr V_{W;n}\, \matrt X_n^T\matr D_{n,0}^{-1} \vectt Z_{n}, \matr V_{0;n}  \right)$, i.e.\ the covariance matrix under the true likelihood instead of under the Whittle likelihood. 
This shows second order correctness of the Bayesian procedure in the sense that the credibility sets are asymptotic confidence sets in this case. Otherwise any Bayesian uncertainty quantification of a Bayesian point estimate such as the posterior standard deviation will systematically over- or underestimate the true uncertainty.

In the one-parameter situation, i.e. when $\matr X_n$ is actually a vector,  under the boundedness assumption of Assumption~\ref{ass:fModel} in combination with \eqref{stand_ass_regr} the condition in \eqref{eq_second_order}  comes down to 
\begin{align}\label{eq_asym_equiv}
  &  \left\|\frac 1 n\matrt X_n^T\matr D_{n,0}^{-1}(\matrt \Sigma_n-\matr D_{n,0})
	\matr D_{n,0}^{-1}\matrt X_n\right\|_F\notag\\
&	=\left\|\frac 1 n\matr X_n^T\left(\matr F_n^T\matr D_{n,0}^{-1}\matr F_n\, \matr \Sigma_n\, \matr F_n^T\matr D_{n,0}^{-1}\matr F_n- \matr F_n^T\matr D_{n,0}^{-1}\matr F_n \right) \matr X_n\right\|_F
	\to 0.
\end{align}
Standard results from time series analysis (or circulant matrices) show that  the entries of $\matrt \Sigma_n-\matr D_{n,0}$ are uniformly bounded by $1/n$  under suitable assumptions (see e.g.\ Theorem 4.9 in~\cite{meier}), which is not strong enough to obtain the above results in general. 

Indeed, Example~\ref{counterexample_1} gives a counter-example, where asymptotic second-order correctness does indeed not hold, while Examples~\ref{ex_mean_model} and \ref{ex_linear} give situations, where asymptotic second-order correctness does hold.

This shows that one has to check asymptotic second-order correctness on a case-by-case basis for the credibility sets to be asymptotic confidence sets.

We will now come back to our initial examples:
\setcounter{temp}{\value{Example}}
\setcounter{Example}{\value{ex_mean}}
\begin{Example}[Time series with unknown mean (cont.)]
 Some simple calculations show that $\matrt X_n=(\sqrt{n},0,\ldots,0)^T$ and the first component of $\vectt Z_{n}$ is given by the sample mean $\bar{Z}_n$ of the observations. Consequently, by Theorem~\ref{eq:bvmLinearModel} the posterior distribution of $\mu$ is centered at the sample mean and multiplied with $\sqrt{n}$ behaves asymptotically like a centered normal distribution with  variance equal to $2\pi f_0(0)$. Thus, if one chooses the expectation of the posterior as estimator, then this estimator is asymptotically equal to the sample mean. It is well known, see e.g.\ Theorems 7.1.1 in \cite{brockwell1991},  that the sample mean of Gaussian time series, centered at the true expectation and multiplied by $\sqrt{n}$, is asymptotically normal with variance $2\pi f_0(0)$, indeed the matrix entry in the first row and first column  of $\matrt \Sigma_n$ is given by
\begin{align*}
	&n\, \left( \gamma(0)+2\sum_{l=1}^{n-1}\frac{n-l}{n}\, \gamma(l) \right)=n (2\pi f_0(0)+o(1)),
\end{align*}
such that $\matr V_{0;n}$ as in \eqref{eq_cov_true} multiplied with $n$ coincides asymptotically with $2\pi f_0(0)$.

Consequently, the corresponding Bayesian inference is second-order correct in the above sense and the corresponding credibility sets  are also asymptotical confidence intervals. 
\end{Example}
\setcounter{Example}{\value{temp}}

\setcounter{temp}{\value{Example}}
\setcounter{Example}{\value{ex_linear}}

\begin{Example}[Time series with unknown linear trend (cont.)]
	In this case, the frequency domain signal is (in general) nonzero in all components, c.f. Example (c) in Section~8.1 in~\cite{meier}. In particular, unlike in Example~\ref{ex_mean_model}, the asymptotic distribution now depends on all frequencies and not only frequency zero.
 In the one-parameter model with a linear trend but no intercept, i.e.\ for $\matr X_n=(1/n,2/n,\ldots,1)^T$, we get as in  Example (c) in Section~8.1 in~\cite{meier} for $n$ odd
\begin{align*}\matrt X_n^T=\frac{1}{\sqrt{2n}}((n+1)/\sqrt{2}, 1,\cot(\lambda_1/2),1,\cot(\lambda_2/2),\ldots, 1,\cot(\lambda_N/2))
\end{align*}
with $\lambda_k=2\pi k/n$ and $N=\lfloor (n-1)/2\rfloor$ as before. For $n$ even an additional term of $1/\sqrt{4n}$ appears at the end of the vector.

If the entries of $\matrt \Sigma_n-\matr D_{n,0}$ are uniformly bounded by $1/n$, which holds  under suitable assumptions, see e.g.\ Theorem 4.9 in~\cite{meier}, and by the boundedness assumptions on the spectral density, then \eqref{eq_asym_equiv} is fulfilled if  $ \|\matrt X_n\|_1=o(n)$.

In the above case, clearly, the sum over all entries of $\matrt X_n$ except for the cotangent-terms is of order $O(\sqrt{n})=o(n)$. Concerning the sum over the absolute values of the remaining cotangent-terms, note, that by the proof of Lemma 4.4.2 in \cite{KirchDiss}
 \begin{align*}
     |\cot(\omega_j/2)|\le \frac{1}{\sin(\pi j/n)}=O\left(\max\left(\frac n j,\frac{n}{n-j}\right)\right)
 \end{align*}
 and the sum over these terms is bounded by $O(n\log n)$. Due to the 
 additional $1/\sqrt{n}$ factor the sum over the cotangent-terms of $\matrt X_n$ is of order $O(\sqrt{n}\log n)=o(n)$. Consequently, the desired condition $ \|\matrt X_n\|_1=o(n)$ holds.
\end{Example}
\setcounter{Example}{\value{temp}}

While in the above two examples, the second-order correctness of the Bayesian procedure with misspecified Whittle-likelihood was shown, this is by no means guaranteed as the following example shows:

\begin{Example}[Time series with design matrix not yielding nominal frequentist coverage]\label{counterexample_1}
	Here, we will give an example, where \eqref{eq_asym_equiv} does not hold. To do so, we  consider a Gaussian AR(1) time series which allows us to calculate the matrices $\matr F_n^T\matr D_{n,0}^{-1}\matr F_n$ and  $\matr F_n^T\matr D_{n,0}^{-1}\matr F_n\, \matr \Sigma_n\, \matr F_n^T\matr D_{n,0}^{-1}\matr F_n$, which makes checking \eqref{eq_asym_equiv} for specific design matrices very easy. In particular, this provides a counter example where second-order correctness does not hold and corresponding credibility sets are not asymptotic confidence sets.

	To elaborate, let the noise $\{e_t\}_{t\in\mathbb{N}}$ be a causal Gaussian AR(1) process satisfying $e_t=\alpha_0 e_{t-1}+\epsilon_t$, $\epsilon_t\sim N(0,\sigma_0^2)$ with $0<|\alpha_0|<1$, $\sigma_0>0$ and spectral density $f_{AR}$, hence $\matr \Sigma_n=\sigma_0^{2}(1-\alpha_0^2)^{-1}\left(\alpha_0^{|i-j|}\right)_{i,j=1,\cdots,n}\in\mathbb{R}^{n\times n}$, see e.g.\ Example~3.2.2 in combination with Theorem 3.2.1 in \cite{brockwell1991}. Furthermore, by e.g.\ Theorem~4.4.2 in \cite{brockwell1991},
	$f_{AR}^{-1}=f_{MA}$ is the spectral density of an MA(1) time series  with moving average parameter $-\alpha_0$ and variance $\sigma_0^{-2}$ of the innovations, hence for $n\ge 4$,
\begin{align*}
\matr F_n^T\matr D_{n,0}^{-1}\matr F_n
=\sigma_0^{-2}
\left(
\begin{matrix}
1+\alpha_0^2&-\alpha_0&0&0&\cdots&-\alpha_0\\
-\alpha_0&1+\alpha_0^2&-\alpha_0&0&\cdots&0\\
0&-\alpha_0&1+\alpha_0^2&-\alpha_0&\cdots&0\\\vdots&\ddots&\ddots&\ddots&\ddots&\vdots\\
0&0&\cdots&-\alpha_0&1+\alpha_0^2&-\alpha_0\\
-\alpha_0&0&\cdots&0&-\alpha_0&1+\alpha_0^2\\
\end{matrix}
\right)
\end{align*}
which is the correspondent circulant matrix, see e.g.\ (4.5.5) in \cite{brockwell1991} noting that $m(h)$ there is the autocovariance function of an MA(1) time series which is zero for $|h|>1$, such that the $\lambda_j$ there coincide with $f_{MA}=f_{AR}^{-1}$ at $\omega_j$ for $n\ge 2$. To facilitate notation we further assume that $\sigma_0=1$ in the following, where the results for general $\sigma_0$ follow easily as this is just a multiplicative constant.

According to equation (15) of \cite{Galbraith_and_Galbraith1974}, noting that $M_n$ there is $\matr \Sigma_n^{-1}$ here, $\matr F_n^T\matr D_{n,0}^{-1}\matr F_n$ and $\matr\Sigma_n^{-1}$ only differ at the four corner entries. To be more specific, we have
\begin{align}\label{circ_inverse}
    \matr F_n^T\matr D_{n,0}^{-1}\matr F_n = \matr\Sigma_n^{-1} + \matr A_n,
\end{align}
where $\matr A_n = \alpha_0^2\matr E_{(1,1);n} - \alpha_0\matr E_{(1,n);n} - \alpha_0\matr E_{(n,1);n} + \alpha_0^2\matr E_{(n,n);n}$ and $\matr E_{(i,j);n}$ is an $n\times n$ matrix whose only non-zero entry is the $(i,j)$-th entry with value 1.

Therefore, 
\begin{align*}
    &\left[\matr F_n^T\matr D_{n,0}^{-1}\matr F_n\, \matr \Sigma_n\right]\, \matr F_n^T\matr D_{n,0}^{-1}\matr F_n- \matr F_n^T\matr D_{n,0}^{-1}\matr F_n 
    =\left[\matr F_n^T\matr D_{n,0}^{-1}\matr F_n\, \matr \Sigma_n - \matr{\mbox{Id}_n}\right]\, \matr F_n^T\matr D_{n,0}^{-1}\matr F_n \\
    &=\left[\matr A_n\matr\Sigma_n\right]\, \matr F_n^T\matr D_{n,0}^{-1}\matr F_n
    =\matr A_n + \matr A_n \matr\Sigma_n\matr A_n \\
    &=2\, \frac{\alpha_0^2-\alpha_0^{n+2}}{1-\alpha_0^2}\left(\matr E_{(1,1);n}+\matr E_{(n,n);n}\right) - \frac{(\alpha_0+\alpha_0^3)(1-\alpha_0^{n})}{1-\alpha_0^{2}}\left(\matr E_{(1,n);n} + \matr E_{(n,1);n}\right).
\end{align*}
This shows that again the only non-zero entries of the matrix \newline$\left[\matr F_n^T\matr D_{n,0}^{-1}\matr F_n\, \matr \Sigma_n\right]\, \matr F_n^T\matr D_{n,0}^{-1}\matr F_n- \matr F_n^T\matr D_{n,0}^{-1}\matr F_n$ are located at the four corner entries.

 Consequently, for a one-parameter regression design $\matr X_n=(x_1,\ldots,x_n)^T$, it holds
 \begin{align*}
	 &\matr X_n^T \left(\matr F_n^T\matr D_{n,0}^{-1}\matr F_n\, \matr \Sigma_n\, \matr F_n^T\matr D_{n,0}^{-1}\matr F_n- \matr F_n^T\matr D_{n,0}^{-1}\matr F_n \right) \matr X_n\\
	 &=\frac{1-\alpha_0^n}{1-\alpha_0^2}\,\left( 2\alpha_0^2\left(x_1^2+x_n^2\right)-2 \alpha_0 (1+\alpha_0^2)\, x_1x_n \right).
 \end{align*}
 Choosing, e.g., $\matr X_n=(1,\cdots,1,1+\sqrt{n})^T$ fulfills $\matr X_n^T\matr X_n=2(n+\sqrt{n})$ such that \eqref{stand_ass_regr} and consequently Assumption~\ref{ass:designMatrix} are satisfied. 
However, 
clearly \eqref{eq_asym_equiv} is not fulfilled as
\begin{align*}
	&\frac 1 n \,\matr X_n^T \left(\matr F_n^T\matr D_{n,0}^{-1}\matr F_n\, \matr \Sigma_n\, \matr F_n^T\matr D_{n,0}^{-1}\matr F_n- \matr F_n^T\matr D_{n,0}^{-1}\matr F_n \right) \matr X_n\to \frac{2\alpha_0^2}{1-\alpha_0^2}\neq 0.
\end{align*}
\end{Example}
The obvious difference between the first two and the last example is the fact that there was a dominating term in $\matr X_n$ in the latter, i.e.\ it does not fulfill a Noether-type condition as known from the asymptotic theory on linear rank statistics, see e.g.\ \cite{hajek}. We conjecture, that this may be key to distinguishing these two situations even though the exact time series structure given in the last example only yields a counter-example if the first or last term are dominating.

These considerations are also consistent with the fact that frequentist results concerning the asymptotic normality of time series regression often require much stronger assumptions on the $\matr X_n$ than Assumption~\ref{ass:designMatrix}, see e.g. \cite{amemiya1973}. Indeed, the design matrix in Example~\ref{counterexample_1} satisfies Assumption~\ref{ass:designMatrix} but fails to fulfill Assumption 2 of \cite{amemiya1973}, which is related to a Noether-condition in a non-triangular setting.

\section{Simulation and Case Study}\label{sec:simulation}

\subsection{Mean plus Autoregressive Error Model}\label{subsec:mean}
To compare the effect of a correctly specified parametric error model, a misspecified parametric error model and the nonparametric Whittle-approach on the estimates of the linear model coefficients, let us consider the simple mean plus AR(1) error  model
\begin{equation}\label{eq:misspecified}
Z_t = \mu + Y_t,\qquad \text{with } Y_t= \rho Y_{t-1} + e_t,  \quad e_{t} \stackrel{iid}{\sim} N(0,\sigma^2)
\end{equation} 
for $t=1,\ldots,n$.
 We simulate $N=500$ time series of  model (\ref{eq:misspecified}) with  $\mu=1$, $\sigma^2=1$,  and $\rho=\pm 0.7$   for increasing time series lengths of  $n=128, 256, 512$ and fit a  nonparametric error model using the Bernstein-Dirichlet process prior (NP), a correctly specified mean plus AR(1) error model (AR) and a misspecified mean plus iid  error model (WN). We use noninformative Jeffreys' priors for 
 $\mu$ and $\sigma^2$ and a Uniform(-1,1) prior on $\rho$.
Table \ref{table:AR_nuisance_mean} compares the posterior mean, the mean squared error, coverage and length of 90\% credible intervals for $\mu$ for the correctly specified and the misspecified error model.
 
Not surprisingly, the coverage of the correctly specified parametric model is  closest to $90\%$ with the discrepancy being due to prior influence. The coverage of the misspecified parametric model, on the other hand, is well off target  with under $50\%$ in the case of positive autocorrelation, $100\%$ for negative autocorrelation  and without any  noteworthy improvement with increasing sample size. The discrepancy  of the misspecified model is explained by an under-estimation (over-estimation) of the spectral density at frequency $0$ for positive (negative) autocorrelation. The nonparametric model also has  a coverage that is  too low but it increases with increasing sample size as predicted by the theoretic results. Here, the discrepancy is due to a less precise estimator for the spectral density at frequency zero  in addition to the influence of the complicated prior  compared to the correctly specified parametric model. The undercoverage not only for positive but also negative autocorrelation is somewhat surprising but a closer look at the connection between the posterior of the spectral density at frequency zero and the posterior of the mean in the simulations reveals the usual connection of a tendency of a higher coverage conditioned on an overestimation of the spectral density at frequency $0$.

 \begin{table}[ht]
 \renewcommand{\arraystretch}{1.1}
 \label{table:AR_nuisance_mean}
\centering
\begin{tabular}{l|rrr|rrr|rrr}
  \hline
   & \multicolumn{3}{c}{$n=128$} &  \multicolumn{3}{c}{$n=256$} & \multicolumn{3}{c}{$n=512$}\\
 $\rho=0.7$  & NP & AR & WN & NP & AR & WN& NP & AR & WN \\
  \hline
   $\hat{\mu}$ & 0.9918 & 0.9919 & 0.9918 & 1.0035 & 1.0033 & 1.0034 & 1.0008 & 1.0007 & 1.0008 \\
   MSE & 0.3185 & 0.3207 & 0.3186 & 0.2176 & 0.2188 & 0.2177 & 0.1528 & 0.1541 & 0.1529 \\
  Coverage & 0.6860 & 0.8700 & 0.4660 & 0.7240 & 0.8940 & 0.4860 & 0.7300 & 0.8680 & 0.4960 \\
   Length & 0.6769 & 1.0283 & 0.4048 & 0.4917 & 0.7021 & 0.2881 & 0.3514 & 0.4851 & 0.2033 \\
   \hline
& & & & & & & & & \\
 $\rho=-0.7$  & & & & & & & & & \\  
  \hline
   $\hat{\mu}$ & 0.9986 & 0.9985 & 0.9988 & 1.0005 & 1.0006 & 1.0006 & 1.0000 & 1.0001 & 1.0001 \\
   MSE & 0.0571 & 0.0566 & 0.0572 & 0.0387 & 0.0387 & 0.0387 & 0.0271 & 0.0272 & 0.0272 \\
   Coverage  & 0.6660 & 0.8780 & 1.0000 & 0.7380 & 0.8820 & 1.0000 & 0.7740 & 0.8780 & 1.0000 \\
   Length & 0.1258 & 0.1726 & 0.4081 & 0.0971 & 0.1210 & 0.2897 & 0.0687 & 0.0849 & 0.2044 \\
\hline
   \hline
\end{tabular}
\caption{This table compares the posterior mean, the mean squared error, coverage and length of a 90\% posterior credible interval for $\mu$ for the correctly specified and misspecified error model.}
\end{table}

 \subsection{Linear Model with Autoregressive Errors}
 The main objective of this simulation study is to confirm empirically that the
 marginal posterior distribution of the linear model coefficients contracts at parametric rate. To this end, we simulate $N=500$ time series from a simple linear regression
plus AR(1) error model:
 \begin{equation}
 Z_t= \beta_0 + \beta_1 x_t + Y_t,\qquad\text{with }Y_t =\rho Y_{t-1} + e_t,  \quad e_{t} \stackrel{iid}{\sim} N(0,\sigma^2)
 \end{equation}
 with parameters $\beta_0=1$, $\beta_1=100$, for the linear model coefficients, $x_t=\frac{t}{n}$, $t=1,\ldots,n$, and $\sigma=1$ for increasing lengths of the time series $n=128,256,512$. The values for the  autoregressive coefficients of the AR(1) errors are chosen as in Section  \ref{subsec:mean}.
 
 Again, we fit a correctly specified linear regression plus AR(1) error model, a misspecified linear regression plus iid  error model and a nonparametric error model using the Bernstein-Dirichlet process prior. We use noninformative Jeffreys' priors for 
 $\beta_0$, $\beta_1$ and $\sigma^2$ and a Uniform(-1,1) prior on $\rho$.
Table \ref{table:ARnuisance-lm} compares the posterior means of the linear regression coefficients, their mean squared error (MSE) and the frequentist coverage and average length of a 90\% posterior credible interval.
\begin{table}[ht]
\label{table:ARnuisance-lm}
\centering
\begin{tabular}{l|rrr|rrr|rrr}
  \hline
   & \multicolumn{3}{c}{$n=128$} &  \multicolumn{3}{c}{$n=256$} & \multicolumn{3}{c}{$n=512$}\\
 $\rho=0.7$   & NP & AR & WN & NP & AR & WN& NP & AR & WN \\
  \hline
 $\hat{\beta}_0$& 0.9951 & 0.9985 & 0.9946 & 1.0010 & 0.9991 & 0.9979 & 0.9830 & 0.9844 & 0.9850 \\
  MSE($\beta_0$) & 0.0049 & 0.0015 & 0.0054 & 0.0010 & 0.0009 & 0.0021 & 0.0170 & 0.0156 & 0.0150 \\
Coverage($\beta_0$) & 0.6420 & 0.7680 & 0.4780 & 0.6740 & 0.7680 & 0.4840 & 0.7440 & 0.8220 & 0.4940 \\
  Length($\beta_0$)  & 1.2118 & 1.4655 & 0.7892 & 0.9357 & 1.0756 & 0.5665 & 0.6994 & 0.7777 & 0.4041 \\
    $\hat{\beta}_1 \times 10^{-2}$ & 1.0036 & 0.9999 & 1.0045 & 0.9998 & 0.9998 & 0.9998 & 1.0203 & 1.0176 & 1.0161 \\
   MSE($\beta_1$)  & 0.0036 & 0.0031 & 0.0045 & 0.0218 & 0.0176 & 0.0158 & 0.0203 & 0.0176 & 0.0161 \\
   Coverage($\beta_1$) & 0.6560 & 0.7860 & 0.4720 & 0.6640 & 0.7820 & 0.4520 & 0.7520 & 0.8120 & 0.5100 \\
   Length($\beta_1$)& 2.1525 & 2.5043 & 1.3589 & 1.6940 & 1.8847 & 0.9784 & 1.2791 & 1.3775 & 0.6989 \\
   \hline
    & & & & & & & & & \\
 $\rho=-0.7$  &  &  &  &  & & &  & &  \\
  \hline
   $\hat{\beta}_0$ & 0.9977 & 0.9996 & 0.9985 & 1.0000 & 0.9998 & 0.9990 & 0.9975 & 0.9975&  0.9968 \\
  MSE($\beta_0$) & 0.0023 & 0.0004 & 0.0015 & 0.0000 & 0.0002 & 0.0010 & 0.0025 & 0.0025 & 0.0032 \\
   Coverage($\beta_0$) & 0.6960 & 0.7960 & 1.0000 & 0.7940 & 0.8000 & 1.0000 & 0.8020 & 0.8320 & 1.0000 \\
   Length($\beta_0$)  & 0.2501 & 0.2893 & 0.8307 & 0.2038 & 0.2018 & 0.5789 & 0.1431 & 0.1417 & 0.4080 \\
    $\hat{\beta}_1 \times 10^{-2}$ & 1.0019 & 0.9999 & 1.0004 & 0.9999 & 0.9999 & 0.9999 & 1.0025 & 1.0027 & 1.0039 \\
   MSE($\beta_1$) & 0.0019 & 0.0011 & 0.0004 & 0.0039 & 0.0033 & 0.0021 & 0.0025 & 0.0027 & 0.0039 \\
    Coverage($\beta_1$) & 0.7180 & 0.8360 & 1.0000 & 0.8200 & 0.8380 & 1.0000 & 0.8260 & 0.8320 & 1.0000 \\
    Length($\beta_1$)& 0.4629 & 0.5159 & 1.4305 & 0.3765 & 0.3608 & 0.9997 & 0.2646 & 0.2536 & 0.7057 \\
\hline
\end{tabular}
\caption{This table compares the posterior mean, the mean squared error, coverage and length of a 90\% posterior credible interval of the regression coefficients for the correctly specified and misspecified error model.}
\end{table}

Similar to the previous case in Section~\ref{subsec:mean}, the coverage of both intercept and slope of the correctly specified parametric model is  closest to $90\%$. The coverage of the misspecified parametric model, on the other hand, is well below $90\%$ target  with under $50\%$ in the case of positive autocorrelation, $100\%$ for negative autocorrelation. The coverage does not improve with increasing sample size. As discussed before, this can be explained by an under-estimation (over-estimation) of the spectral density at frequency $0$ for positive (negative) autocorrelation. While the nonparametric model also has a coverage that is  too low, it increases with increasing sample size as predicted by the theoretical results. In this case, the undercoverage arises from the use of a less precise estimator for the spectral density at frequency zero, along with the impact of the more complex prior, as opposed to the correctly specified parametric model.

 \subsection{Case Study}

\begin{figure}
  \centering
  \includegraphics[width=1\columnwidth, keepaspectratio]{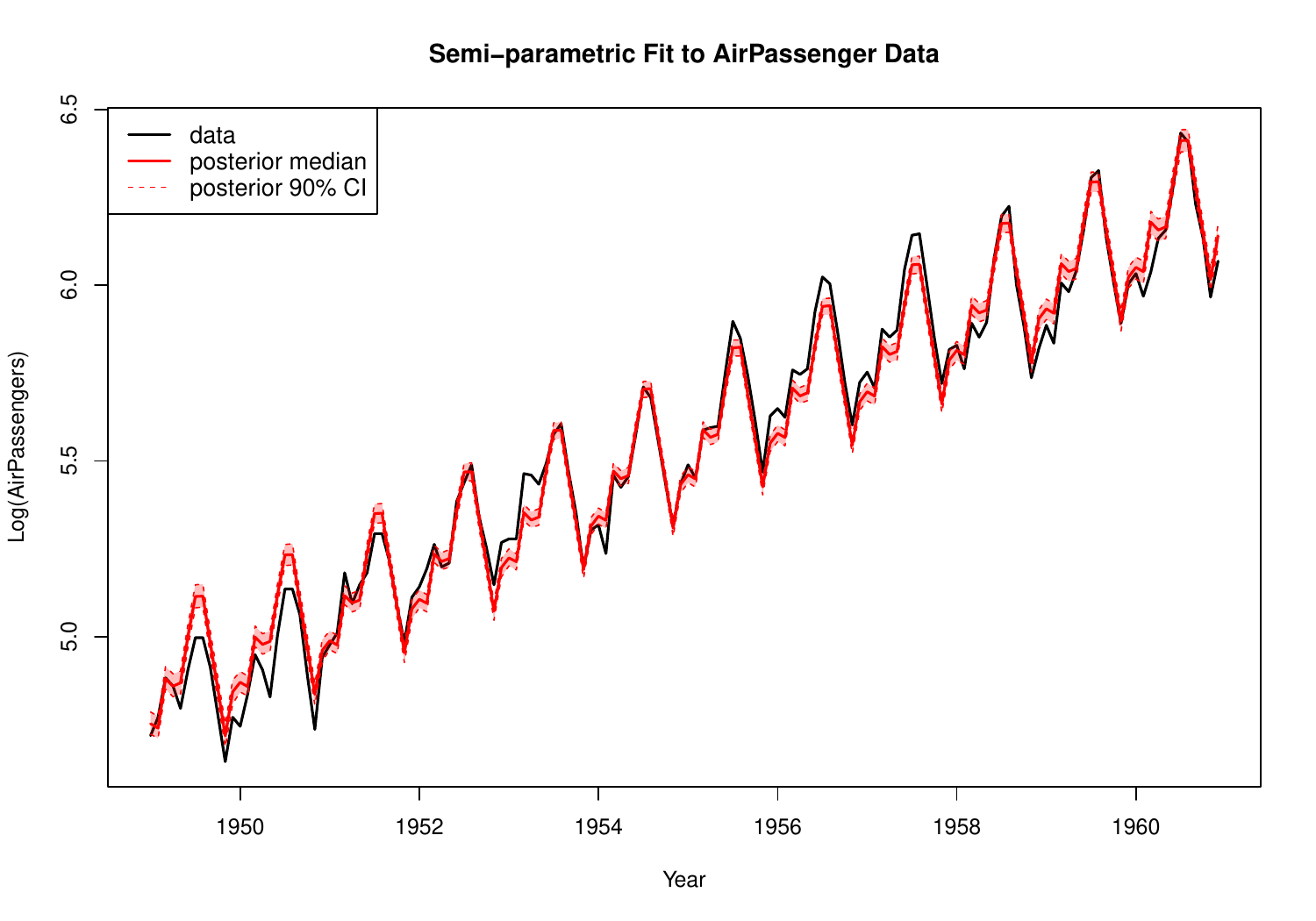}
  \captionof{figure}{
  Number of monthly airline passengers on log scale in black, posterior median of 6000 MCMC samples in red and 90\% posterior credible intervals in transparent red between 5 and 95\% quantiles in dashed red.
}
  \label{fig:AirPassFit}
\end{figure}

Here we analyze the “AirPassengers” dataset in R that contains the monthly total number (in thousands) of international airline passengers from 1949 to 1960. The data shows a clear linear temporal trend as well as seasonality.  Using Whittle's likelihood, we fit a linear model to the logarithmic data, denoted by $Z_t$, using standardised time and dummy 0-1 variables for each month as covariates while omitting the intercept (in contrast to the parametrization in Example~\ref{example:sesonality} of Section~\ref{subsec:definition} that contains the intercept but not the last dummy variable), i.e.,
 \[ 
Z_t = \beta_1 x^{(1)}_t + \sum_{i=2}^{13} \beta_i x_t^{(i)} +\epsilon_t
\]
where $x^{(1)}_t=\frac{t}{n},\; x^{(i)}_t=1$ if $t$ is a multiple of $i$ and 0 otherwise, for $t=1,\ldots, n=144$ and $i=2,\ldots 13$. The error process $\epsilon_t$ is assumed to be a mean zero Gaussian time series with spectral density $f(\omega)$. We use a flat Normal prior with mean 0 and standard deviation of 10,000 for $\beta_1$, flat Normal priors with mean 0 and standard deviation 100 for $\beta_i$, $i=2,\ldots, 13,$ and the Bernstein-Gamma process prior for the error process. In the univariate case, a normalized version of the Bernstein-Hpd matrix-Gamma process prior is the Bernstein-Dirichlet process, i.e.,
\begin{align*}
  f(\omega)= f(\omega|\Phi, k) := \tau \sum_{j=1}^k\Phi'(I_{j,k})b(\omega/\pi|j,k-j+1), \quad 0 \leq \omega \leq \pi, \\
  \Phi \sim \operatorname{DP}(M,G_0), \quad k \sim p(k),\quad \tau\sim \Gamma(\alpha,\beta)
\end{align*}
where $\operatorname{DP}(M,G_0)$ denotes the Dirichlet process with concentration parameter $M$ and base measure $G_0$. Thus, we use the Bernstein-Dirichlet prior for the spectral density of the error process, with hyperparameters set to $G_0=\mbox{Uniform}[0,1]$, $M=1$, $p(k) \propto \exp( -0.01 k^2)$, and $\alpha=\beta=0.001$.

We ran 50000 MCMC iterations with a burnin period of 20,000 and a thinning factor of 5. Figure \ref{fig:AirPassFit} shows the data overlaid by the posterior median and 90\% posterior credible intervals for trend and seasonality.
Both increasing linear trend over time as well as the seasonality components are accurately estimated while taking the serially correlated errors into account when calculating the marginal posterior credible bands.

\section{Proof of Theorem \ref{eq:bvmLinearModel}}\label{sec:proof}

The following theorem constitutes the main tool for the proof of Theorem~\ref{eq:bvmLinearModel}. It is due to~\cite{ghosal2017fundamentals}.
A general semiparametric Bernstein-von-Mises theorem has also been presented in \cite{bickel2012semiparametric}.
However, the authors there restrict their attention to the iid setting.
One possible route for an alternative proof of Theorem~\ref{eq:bvmLinearModel} could be 
a generalization of their results (and the proof technique, which is closely related
to concentration of partially misspecified posterior distributions as discussed 
in~\cite{kleijn2012bernstein}) to the non-iid setting.
\begin{Theorem}[General semiparametric BvM]\label{th:bvmMultiGeneral}
	Let~$Z_1,\ldots,Z_n$ be random variables  with likelihood
function having Lebesgue density~$p^n(\vect Z_n|\vect\theta,\eta)$, where~$\vect\theta\in\Theta\subset\mathbb R^r$
and~$\eta\in\mathcal H$ and~$\mathcal H$ being some measurable space.
Let the prior on~$(\vect\theta,\eta)$ be of the form~$P(d\vect\theta,d\eta)=P(d\vect\theta)P(d\eta)$.
Assume that there exist sequences of subsets~$\mathcal H_n\subset\mathcal H$ and
\begin{align}\label{eq:ThetaNHNDefT}
	\Theta_n=
	\left\{ \vect\theta\in\Theta\colon(\vect\theta-\vect\theta_0)^T\matr S_n(\vect\theta-\vect\theta_0)\leq \sqrt{\lambda_{\min}(\matr S_n)} 
	\right\}
\end{align}
with $\matr S_n$ as in (b). 
Additionally, let the following properties hold:
\begin{enumerate}[label=(\alph*)]
\item Prior Positivity: In an open neighbourhood of~$\vect\theta_0$, the prior~$P(d\vect\theta)$
possesses a continuous and strictly positive Lebesgue density.
\item Local Asymptotic Normality (LAN): There exists a tight sequence of random
variables~$(\vect G_n)$ (with respect to $P_0^n$, where~$P_0^n$ denotes the distribution of~$\vect Z_n$ under~$(\vect\theta_0,\eta_0)$), a sequence of spd matrices~$(\matr S_n)$ with $\lambda_{\min}(\matr S_n)\to \infty$ such that
\[
  \log \frac{p^n(\vect Z_n|\vect\theta,\eta)}{p^n(\vect Z_n|\vect\theta_0,\eta)}
  = \left( \vect\theta-\vect\theta_0 \right)^T\matr S_n^{1/2}\vect G_n
  -\frac{1}{2}\left( \vect\theta-\vect\theta_0 \right)^T\matr S_n\left( \vect\theta-\vect\theta_0 \right)
  + R_n(\vect\theta,\eta)
\]
holds for~$(\vect\theta,\eta)\in\Theta\times\mathcal H$ with  remainder term fulfilling, as $n \to \infty$, in $P_0^n$-probability
\begin{align}\label{eq:LANwhatwastoShow}
  \sup_{(\vect\theta,\eta)\in\Theta_n\times\mathcal H_n}\frac{\left|R_n(\vect\theta,\eta)\right|}{1+(\vect\theta-\vect\theta_0)^T\matr S_n(\vect\theta-\vect\theta_0)}
  \to 0.
\end{align}
The distribution of~$(\vect G_n)$ as well as the quantities $(\matr S_n)$ may depend on~$(\vect\theta_0,\eta_0)$ 
but not on~$(\vect\theta,\eta)$.
\item Joint 
	Conditional Nuisance Posterior Concentration: It holds
\[ 
P^n\left( (\vect\theta,\eta)\in\Theta_n\times\mathcal H_n | \vect Z_n \right) \to 1 \quad \text{in } P_0^n \text{-probability}.
\]
\end{enumerate}
Then, the marginal posterior of~$\vect\theta$ fulfils
\[
\delta_{\operatorname{TV}}\left[ P^n(\vect\theta\in\cdot|\vect Z_n), N_r\left( \vect \theta_0+\matr S_n^{-1/2}\vect G_n,\matr S_n^{-1}  \right) \right] \to 0
\]
in $P_0$-probability.
\begin{proof}
The result is a modification of Theorem~12.8 in~\cite{ghosal2017fundamentals},
from scalar to vector-valued parameters of interest and without the usage of
so-called \emph{least-favourable transformations}. The beginning of the proof is analogous to their proof with the decomposition as in (b). The arguments leading to (12.9) simplify  due to the assumption of independent priors on $\vect \theta$ and $\eta$: Indeed, in their notation $\widetilde{Q}_n(\theta)=\int_{\mathcal{H}_n}p^n(\vect Z_n|\vect\theta_0,\eta)\,P(d\eta)$, which does only depend on $\theta_0$ but not on $\theta$, such that (12.9) follows immediately by~\eqref{eq:LANwhatwastoShow}. 
Next, $\Theta_n$ shrinks to $\vect\theta_0$ since by
$(\vect\theta-\vect\theta_0)^T\matr S_n(\vect\theta-\vect\theta_0)\geq\lambda_{\min}(\matr S_n)\, \|\vect\theta-\vect\theta_0\|^2$ it holds  $ \|\vect\theta-\vect\theta_0\|^2 \le 1/\sqrt{\lambda_{\min}(\matr S_n)}\to 0$ by the definition of $\Theta_n$ and by $\lambda_{\min}(\matr S_n)\to \infty$.
This in combination with Property (a) 
allows us to replace the integral with respect to the prior by the corresponding integral with respect to the Lebesgue measure as in the proof of Theorem~12.8 in~\cite{ghosal2017fundamentals}.
By definition of $\Theta_n$ the set  $\matr S_n^{1/2}\left( \Theta_n-\vect\theta_0 \right)$ is the closed ball around $0$ with radius $(\lambda_{\min}(\matr S_n))^{1/4}\to\infty$ such that $\matr S_n^{1/2}\left( \Theta_n-\vect\theta_0 \right)\to \mathbb{R}^r$. This allows to replace the neighborhood $\Theta_n$ by $\mathbb{R}^k$ in the remaining integrals which shows that the posterior probability of a set $B$ is asymptotically proportional to 
\begin{align*}
	&\int_B\exp\left( \left( \vect\theta-\vect\theta_0 \right)^T\matr S_n^{1/2}\vect G_n
  -\frac{1}{2}\left( \vect\theta-\vect\theta_0 \right)^T\matr S_n\,\left( \vect\theta-\vect\theta_0 \right)
  \right)\,d\vect\theta\\
  &\propto \int_B\exp\left( -\frac 1 2 (\vect\theta-\vect\theta_0- \matr S_n^{-1/2}\vect G_n)^T\matr S_n (\vect\theta-\vect\theta_0-\matr S_n^{-1/2}\vect G_n) \right) \,d\vect\theta,
\end{align*}
which yields the assertion.
\end{proof}
\end{Theorem}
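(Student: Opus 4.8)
The plan is to follow the classical posterior-expansion route of Theorem~12.8 in \cite{ghosal2017fundamentals}, but to exploit the product structure of the prior to bypass the least-favourable transformations used there. First I would write the marginal posterior of~$\vect\theta$ on a measurable set~$B\subset\mathbb R^r$ as the ratio
\[
  P^n(\vect\theta \in B \mid \vect Z_n)
  = \frac{\int_B \int_{\mathcal H} p^n(\vect Z_n \mid \vect\theta, \eta)\, P(d\eta)\, P(d\vect\theta)}
         {\int_\Theta \int_{\mathcal H} p^n(\vect Z_n \mid \vect\theta, \eta)\, P(d\eta)\, P(d\vect\theta)},
\]
and normalize numerator and denominator by the $\vect\theta$-free reference quantity $p^n(\vect Z_n \mid \vect\theta_0, \eta)$ inside the inner integral. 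Property~(c) then lets me restrict all integrals to $\Theta_n \times \mathcal H_n$ at the cost of a factor $1 + o_{P_0}(1)$, since the complement carries vanishing posterior mass in $P_0^n$-probability.

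On $\Theta_n \times \mathcal H_n$ I would substitute the LAN expansion~(b). Because the leading terms $(\vect\theta - \vect\theta_0)^T \matr S_n^{1/2}\vect G_n$ and $-\tfrac12 (\vect\theta - \vect\theta_0)^T \matr S_n (\vect\theta - \vect\theta_0)$ do not depend on~$\eta$, they factor out of the $\eta$-integral, leaving
\[
  \int_{\mathcal H_n} p^n(\vect Z_n \mid \vect\theta_0, \eta)\, e^{R_n(\vect\theta, \eta)}\, P(d\eta).
\]
Here the product prior is decisive: the integral against $p^n(\vect Z_n \mid \vect\theta_0, \eta)$ is exactly the $\vect\theta$-independent quantity $\widetilde Q_n := \int_{\mathcal H_n} p^n(\vect Z_n \mid \vect\theta_0, \eta)\, P(d\eta)$, and the uniform remainder bound~\eqref{eq:LANwhatwastoShow} forces $e^{R_n(\vect\theta,\eta)} = 1 + o(1)$ uniformly over $\mathcal H_n$ on the effective region where $(\vect\theta - \vect\theta_0)^T \matr S_n (\vect\theta - \vect\theta_0)$ stays bounded. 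Thus the $\eta$-integral equals $\widetilde Q_n\,(1 + o(1))$ and cancels between numerator and denominator; this is the analogue of~(12.9) in \cite{ghosal2017fundamentals}, which here is immediate rather than requiring a least-favourable direction.

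After cancellation the posterior density of~$\vect\theta$ restricted to $\Theta_n$ is asymptotically proportional to $g(\vect\theta)\exp\bigl((\vect\theta - \vect\theta_0)^T \matr S_n^{1/2}\vect G_n - \tfrac12 (\vect\theta - \vect\theta_0)^T \matr S_n (\vect\theta - \vect\theta_0)\bigr)$, where $g$ is the prior density. Since $\Theta_n$ shrinks to $\vect\theta_0$ (because $\|\vect\theta - \vect\theta_0\|^2 \le \lambda_{\min}(\matr S_n)^{-1/2} \to 0$ there), Property~(a) and continuity of~$g$ let me replace $g(\vect\theta)$ by the constant $g(\vect\theta_0)$, which cancels as well; this is the step that turns the prior integral into Lebesgue measure. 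I would then rescale $\vect u = \matr S_n^{1/2}(\vect\theta - \vect\theta_0)$, under which $\Theta_n$ maps to the ball of radius $\lambda_{\min}(\matr S_n)^{1/4} \to \infty$, so the domain expands to all of $\mathbb R^r$, and the kernel becomes $\exp(\vect u^T \vect G_n - \tfrac12\|\vect u\|^2)$, the unnormalized density of $N_r(\vect G_n, \Id)$ in $\vect u$, equivalently $N_r(\vect\theta_0 + \matr S_n^{-1/2}\vect G_n, \matr S_n^{-1})$ in $\vect\theta$.

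I expect the main obstacle to be the passage to total-variation (rather than merely weak) convergence, which requires $L^1$ convergence of the rescaled posterior density to the Gaussian density, uniformly in the random center. The delicate ingredient is controlling the tails $\|\vect u\|$ large but still inside $\matr S_n^{1/2}(\Theta_n - \vect\theta_0)$: there the remainder obeys only $|R_n| \le \varepsilon_n(1 + \|\vect u\|^2)$ with $\varepsilon_n \to 0$, so one bounds $\exp(-\tfrac12\|\vect u\|^2 + R_n) \le e^{\varepsilon_n}\exp(-(\tfrac12 - \varepsilon_n)\|\vect u\|^2)$ to obtain an integrable dominating function and invoke dominated convergence. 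Tightness of $(\vect G_n)$ keeps the Gaussian centers from escaping and makes all these bounds hold with $P_0$-probability tending to one, which delivers the claimed total-variation convergence.
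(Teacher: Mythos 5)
Your proposal is correct and follows essentially the same route as the paper's proof: both adapt Theorem~12.8 of \cite{ghosal2017fundamentals} by exploiting the product prior so that the nuisance integral $\widetilde Q_n=\int_{\mathcal H_n}p^n(\vect Z_n|\vect\theta_0,\eta)\,P(d\eta)$ is $\vect\theta$-free and cancels (making their~(12.9) immediate), then replace the prior by Lebesgue measure via shrinkage of $\Theta_n$ and continuity of the prior density, and finally rescale so that $\matr S_n^{1/2}(\Theta_n-\vect\theta_0)$ expands to $\mathbb R^r$, yielding the Gaussian kernel. Your added detail on the tail control $|R_n|\leq\varepsilon_n(1+\|\vect u\|^2)$ and the dominating function $\exp(-(\tfrac12-\varepsilon_n)\|\vect u\|^2)$ simply fills in a step the paper leaves implicit in its sketch.
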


\begin{Remark}\label{rem_BvM_misspecified}
	The proof does not require that the posterior is calculated with the correct likelihood as long as the tightness assumptions and posterior concentrations hold under the true likelihood. Thus, it allows to derive Bernstein-von-Mises theorems for misspecified situations.

	We do not make use of this observation in the  proof of Theorem~\ref{eq:bvmLinearModel} below but instead use the contiguity of Whittle's and the true likelihood. While the LAN condition can also be proved under the true likelihood along the lines of the proof of Theorem~\ref{lemma:LANresult} with only little more effort, we do require the contiguity in the proof of the contractions rates in Theorem~\ref{th:jointContr}. Nevertheless, in the future, the fact that Theorem~\ref{th:bvmMultiGeneral} allows for misspecification could be helpful to take the result of Theorem~\ref{eq:bvmLinearModel} beyond the Gaussian case. For the latter consistency of the spectral density estimator has been shown in \cite{TangYifu2023Pcft}  but the technique there does not give contraction rates as required here.
\end{Remark}

\subsubsection*{Proof of Theorem~\ref{eq:bvmLinearModel}}
We will show that the assumptions of the above theorem hold under Whittle's likelihood at $(\vect \theta_0,f_0)$ with  $\Theta:=\mathbb R^r$, $\mathcal H:=\mathcal C_{\tau_0,\tau_1}$ as in \eqref{eq:truncationSetForBvm} and~$\matr S_n=\matrt X_n^T\matr D_{n,0}^{-1}\matrt X_n$. 
By Lemma~\ref{lemma:cfLike} in the appendix, the definition of $\mathcal C_{\tau_0,\tau_1}$ and Assumption~\ref{ass:designMatrix}  it holds 
\begin{align}\label{eq_lambda_mintoinfty}
	&\lambda_{\min}(\matr S_n)\ge \lambda_{\min}(\matr D_{n,0}^{-1})\,\lambda_{\min}(\matr X_n^T\matr X_n)\ge \tau_1^{-1}\,\lambda_{\min}(\matr X_n^T\matr X_n)\to \infty.
\end{align}

The independence assumption of the prior holds by the prior specification in Section~\ref{section_prior}, while the assumption of prior positivity (as in (a) of Theorem~\ref{th:bvmMultiGeneral}) holds by Assumption~\ref{ass:thetaPrior}.

Furthermore, let $\Theta_n$ as in \eqref{eq:ThetaNHNDefT} and define
\begin{align}
 &\mathcal H_n :=\left\{ f\in\mathcal H\colon \|f-f_0\|_\infty \leq 1/\log(n) \right\}.\label{eq:ThetaNHNDefH}
\end{align}

It remains to show the LAN property as in (b) of Theorem~\ref{th:bvmMultiGeneral} as well as the posterior concentrations as in (c) for the above choices (under $P_W(\cdot|\vect \theta_0,f_0)$). Both will be shown  in the following two subsections:
The LAN property  with $\vect G_n=\matr S_n^{-1/2}\matrt X_n^TD_{n,0}^{-1}(\vectt Z_n-\matrt X_n\vect\theta_0)$ is given in Theorem~\ref{lemma:LANresult} and the joint posterior concentration in
Theorem~\ref{th:jointContr}.

Finally, it holds \begin{align*}
	&\vect \theta_0+\matr S_n^{-1/2} \vect G_n=\matr S_n^{-1}\matrt X_n^TD_{n,0}^{-1}\vectt Z_n,
\end{align*}
such that the total variation distance as given in Theorem~\ref{eq:bvmLinearModel} converges to 0 in $P_{W}^n(\cdot|\vect \theta_0,f_0)$. By contiguity of Whittle's likelihood with the true likelihood (see e.g.\ \cite{choudhuriContiguity}) it then also converges in $P^n_0$.

\subsection{LAN property}
In this section, we will establish the LAN property as required in Theorem~\ref{th:bvmMultiGeneral} (b).
\begin{Theorem}\label{lemma:LANresult}
Under the assumption of Theorem~\ref{eq:bvmLinearModel}, it holds
\[
  \log\frac{p_W^n(\vect z_n|\vect\theta,f)}{p_W^n(\vect z_n|\vect\theta_0,f)}
  = (\vect\theta-\vect\theta_0)^T\matr S_n^{1/2}\vect G_n
    -\frac{1}{2}(\vect\theta-\vect\theta_0)^T\matr S_n(\vect\theta-\vect\theta_0)
    + R_n(\vect\theta,f),
\]
where~$\matr S_n=\matrt X_n^T\matr D_{n,0}^{-1}\matrt X_n$ and $\vect G_n=\matr S_n^{-1/2}\matrt X_n^T\matr D_{n,0}^{-1}(\vectt Z_n-\matrt X_n\vect\theta_0)$, which is a tight sequence of random vectors (under $P_W^n(\cdot|\vect\theta_0,f_0)$), and a remainder term~$R_n(\vect\theta,f)$ that fulfills~\eqref{eq:LANwhatwastoShow}
with~$\Theta_n$ from~\eqref{eq:ThetaNHNDefT} and $\mathcal H_n$ from  \eqref{eq:ThetaNHNDefH}. 
\end{Theorem}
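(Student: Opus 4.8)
The plan is to obtain the required decomposition from an exact computation of the log-likelihood ratio and then to bound the two resulting remainder terms separately; the uniform control of the linear term over $\mathcal H_n$ will be the crux. Since numerator and denominator in $p_W^n(\vect z_n|\vect\theta,f)/p_W^n(\vect z_n|\vect\theta_0,f)$ share the same $f$, the normalising constant in \eqref{eq:whittleLinearModel} cancels. Writing $\vect u:=\vect\theta-\vect\theta_0$ and expanding the two quadratic forms, the ratio equals exactly
\[
  \vect u^T\matrt X_n^T\matr D_n^{-1}(\vectt z_n-\matrt X_n\vect\theta_0)-\tfrac12\vect u^T\matrt X_n^T\matr D_n^{-1}\matrt X_n\vect u,\qquad\matr D_n=\matr D_n(f).
\]
Evaluated at the data, $\vectt Z_n-\matrt X_n\vect\theta_0=\matr F_n\vect e_n=\vectt e_n$. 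Inserting $\matr D_n^{-1}=\matr D_{n,0}^{-1}+\matr M_n$ with the diagonal matrix $\matr M_n:=\matr D_n(f)^{-1}-\matr D_{n,0}^{-1}$, the first parts of the two terms reproduce $\vect u^T\matr S_n^{1/2}\vect G_n$ and $-\tfrac12\vect u^T\matr S_n\vect u$ exactly (using $\matr S_n^{1/2}\vect G_n=\matrt X_n^T\matr D_{n,0}^{-1}\vectt e_n$), leaving the remainder
\[
  R_n(\vect\theta,f)=\vect u^T\matrt X_n^T\matr M_n\vectt e_n-\tfrac12\vect u^T\matrt X_n^T\matr M_n\matrt X_n\vect u .
\]
Tightness of $\vect G_n$ is then immediate: under $P_W^n(\cdot|\vect\theta_0,f_0)$ one has $\vectt e_n\sim N_n(\vect0,\matr D_{n,0})$, so $\vect G_n\sim N_r(\vect0,\Id)$ exactly.

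The deterministic input I would use is that $\matr M_n$ is uniformly small: being diagonal with entries $(2\pi)^{-1}(f(\lambda_j)^{-1}-f_0(\lambda_j)^{-1})$ and $f,f_0\ge\tau_0$ on $\mathcal C_{\tau_0,\tau_1}$, it satisfies $\|\matr M_n\|\le(2\pi\tau_0^2)^{-1}\|f-f_0\|_\infty\le(2\pi\tau_0^2\log n)^{-1}$ for every $f\in\mathcal H_n$. With $\matrt X_n^T\matrt X_n=\matr X_n^T\matr X_n\preceq2\pi\tau_1\matr S_n$ (from $\matr D_{n,0}\preceq2\pi\tau_1\Id$) and $a^2:=\vect u^T\matr S_n\vect u$, the quadratic piece is bounded deterministically by $\pi\tau_1\|\matr M_n\|\,a^2$, so that $\tfrac12|\vect u^T\matrt X_n^T\matr M_n\matrt X_n\vect u|/(1+a^2)\le\pi\tau_1\|\matr M_n\|\lleq(\log n)^{-1}$, uniformly over all $\vect\theta$ and all $f\in\mathcal H_n$. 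The factor $a^2/(1+a^2)\le1$ makes the supremum over $\vect\theta$ harmless, so the real work concerns only $f$.

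For the linear piece set $\matr U_n:=\matrt X_n\matr S_n^{-1/2}$, whose Gram matrix $\matr U_n^T\matr U_n$ has eigenvalues in $[2\pi\tau_0,2\pi\tau_1]$, and $\vect w(f):=\matr U_n^T\matr M_n\vectt e_n$. Cauchy--Schwarz in the $\matr S_n$-metric gives $|\vect u^T\matrt X_n^T\matr M_n\vectt e_n|\le a\|\vect w(f)\|$, hence $|\vect u^T\matrt X_n^T\matr M_n\vectt e_n|/(1+a^2)\le\tfrac12\|\vect w(f)\|$, and the whole problem reduces to $\sup_{f\in\mathcal H_n}\|\vect w(f)\|\to0$ in probability. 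This is the main obstacle: the naive estimate $\|\vect w(f)\|\le\|\matr U_n\|\,\|\matr M_n\|\,\|\vectt e_n\|=O_P(\sqrt n/\log n)$ diverges, because it discards both the fixed rank $r$ of $\matr U_n$ and the cancellation in the noise. Instead I would exploit the Gaussian structure under $P_W^n$: for fixed $f$ the $k$-th coordinate $w_k(f)=\sum_j(\matr U_n)_{jk}(\matr M_n)_{jj}(\vectt e_n)_j$ is centred Gaussian with $\Var(w_k(f))\le\|\matr M_n\|^2\,2\pi\tau_1(\matr U_n^T\matr U_n)_{kk}\lleq(\log n)^{-2}$, which already gives the pointwise claim for each of the finitely many $k=1,\dots,r$.

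To upgrade this to uniformity over $f$ I would invoke chaining. The process $f\mapsto w_k(f)$ is Gaussian with increments obeying $\Var(w_k(f)-w_k(\tilde f))\lleq\|f-\tilde f\|_\infty^2$, the constant being fixed in $n$ (again because $f,\tilde f\ge\tau_0$ and $\sum_j(\matr U_n)_{jk}^2$ is bounded); crucially this modulus does not grow with $n$. Every $f\in\mathcal H_n$ lies within $1/\log n$ of $f_0$ in sup-norm, and $\mathcal H_n\subset\{\|f\|_L\le\tau_1\}$ is a uniformly Lipschitz class, so $\log N(\epsilon,\mathcal H_n,\|\cdot\|_\infty)\lleq1/\epsilon$. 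Dudley's entropy bound then yields, with $\E_W$ denoting expectation under $P_W^n(\cdot|\vect\theta_0,f_0)$,
\[
  \E_W\sup_{f\in\mathcal H_n}|w_k(f)-w_k(f^{\mathrm{ref}})|\lleq\int_0^{C/\log n}\epsilon^{-1/2}\,d\epsilon\lleq(\log n)^{-1/2},
\]
which together with the pointwise bound at a reference $f^{\mathrm{ref}}\in\mathcal H_n$ gives $\E_W\sup_{f\in\mathcal H_n}|w_k(f)|\to0$ and, summing over the $r$ coordinates, $\sup_{f\in\mathcal H_n}\|\vect w(f)\|\to0$ in probability. Combining this with the quadratic bound establishes \eqref{eq:LANwhatwastoShow} under $P_W^n(\cdot|\vect\theta_0,f_0)$; the statement under $P_0^n$ follows from the contiguity of Whittle's and the true likelihood invoked in the proof of Theorem~\ref{eq:bvmLinearModel}.
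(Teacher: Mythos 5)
Your proposal is correct and follows essentially the same route as the paper: the same exact decomposition of the log-likelihood ratio, the same Cauchy--Schwarz reduction of the linear remainder to $\sup_{f\in\mathcal H_n}\|\matr S_n^{-1/2}\matrt X_n^T(\matr D_n^{-1}-\matr D_{n,0}^{-1})\vectt e_n\|\to0$, and the same Gaussian-process chaining via Dudley's entropy bound over the Lipschitz class $\mathcal C_{\tau_0,\tau_1}$ (which the paper packages as Lemma~\ref{lemma:uniform_convergence}), plus the same deterministic operator-norm bound for the quadratic remainder.
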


\begin{proof}
	Denote~$\vectt Z_{n,0}:=\vectt Z_n-\matrt X_n\vect\theta_0$.
With a few calculations we obtain that the log likelihood ratio for Whittle's likelihood~\eqref{eq:whittleLinearModel}
is given as
\begin{align}
  &\log\frac{p_W^n(\vect z_n|\vect\theta,f)}{p_W^n(\vect z_n|\vect\theta_0,f)}= (\vect\theta-\vect\theta_0)^T\matrt X_n^T\matr D_n^{-1}\vectt Z_{n,0}
    -\frac{1}{2}(\vect\theta-\vect\theta_0)^T\matrt X_n^T\matr D_n^{-1}\matrt X_n(\vect\theta-\vect\theta_0)\label{eq:logLikRat0} \\
    &= (\vect\theta-\vect\theta_0)^T\matr S_n^{1/2} \vect G_n    -\frac{1}{2}(\vect\theta-\vect\theta_0)^T\matr S_n(\vect\theta-\vect\theta_0)
    + R_n(\vect\theta,f)\nonumber 
\end{align}
with remainder term
\begin{align*}
  &R_n(\vect\theta,f)\\
  &= (\vect\theta-\vect\theta_0)^T\matrt X_n^T\left(\matr D_{n}^{-1}-\matr D_{n,0}^{-1}\right)\vectt Z_{n,0}
    -\frac{1}{2}(\vect\theta-\vect\theta_0)^T\matrt X_n^T\left(\matr D_{n}^{-1}-\matr D_{n,0}^{-1}\right)\matrt X_n(\vect\theta-\vect\theta_0)\\
  &= R_{n,1}(\vect\theta,f)-\frac 1 2 \, R_{n,2}(\vect\theta,f).
\end{align*}
Under $P_{W;0}^n(\cdot)=P_W^n(\cdot|\vect\theta_0,\eta_0)$
it holds~$\vectt Z_{n,0}\sim N_n(\vect 0,\matr D_{n,0})$
with diagonal frequency domain covariance matrix~$\matr D_{n,0}=\matr D_n(f_0)$.
Consequently, $\vect G_n=\matr S_n^{-1/2}\matrt X_n^T\matr D_{n,0}^{-1}\vectt Z_{n,0}$ is also normally distributed with mean zero and the ($r$-dimensional) identity matrix as covariance, hence tight.
The Cauchy-Schwarz  inequality yields
\begin{align*}
|R_{n,1}(\vect\theta,f)|\leq \sqrt{ (\vect\theta-\vect\theta_0)^T \matr S_n (\vect\theta-\vect\theta_0)}\left\|\matr S_n^{-1/2}\matrt X_n^T\left(\matr D_{n}^{-1}-\matr D_{n,0}^{-1}\right)\vectt Z_{n,0}\right\|.
\end{align*}
Together with the fact that $x\mapsto \sqrt{x}/(1+x)$, $x\ge 0$, is a bounded function, we can show that $R_{n,1}$ satisfies~\eqref{eq:LANwhatwastoShow} by proving $\sup_{f\in\mathcal{H}_{n}}\|\matr S_n^{-1/2}\matrt X_n^T\left(\matr D_{n}^{-1}-\matr D_{n,0}^{-1}\right)\vectt Z_{n,0}\|\to0$ in $P_{W;0}^n$-probability. To this end, we employ Lemma~\ref{lemma:uniform_convergence} which requires two conditions. Let $\matr W_n(f)=\matr S_n^{-1/2}\matrt X_n^T\matr D_{n}^{-1}(f)$. Then for any $f,g\in\mathcal{H}_n$,
\begin{align*}
&\|W_n(f)-W_n(g)\|_{F}=\sqrt{\tr\left(\matr S_n^{-1/2}\matrt X_n^T\left(\matr D_{n}^{-1}(f)-\matr D_{n}^{-1}(g)\right)^2\matrt X_n\matr S_n^{-1/2}\right)} \\
&= O(1) \|f-g\|_{\infty}\sqrt{\tr\left(\matr S_n^{-1/2}\matrt X_n^T\matr D_{n,0}^{-1}\matrt X_n\matr S_n^{-1/2}\right)}
=O(1) \|f-g\|_{\infty},
\end{align*}
due to an application of Lemma~\ref{lemma:cfLike} (c) and noting that $\matr S_n^{-1/2}\matrt X_n^T\matr D_{n,0}^{-1}\matrt X_n\matr S_n^{-1/2}=\mbox{Id}_r$, where the $O(1)$ terms do not depend on $f,g$. The above derivation shows both conditions (i) and (ii) of Lemma~\ref{lemma:uniform_convergence}, establishing \eqref{eq:LANwhatwastoShow} for $R_{n,1}(\vect\theta,f)$. 
Similarly,
\begin{align*}
 & R_{n,2}(\vect\theta,f)
  = (\vect\theta-\vect\theta_0)^T\matrt X_n^T(\matr D_n^{-1}-\matr D_{n,0}^{-1})\matrt X_n(\vect\theta-\vect\theta_0)
  \leq \left\|\matr D_n^{-1}-\matr D_{n,0}^{-1}\right\| \left\| \matrt X_n(\vect\theta-\vect\theta_0)  \right\|^2\\
  &=o(1)\, (\vect\theta-\vect\theta_0)^T \matr S_n (\vect\theta-\vect\theta_0).
\end{align*}
This establishes  \eqref{eq:LANwhatwastoShow} for $R_{n,2}(\vect\theta,f)$ because $x\mapsto x/(1+x)$, $x\ge 0$, is also a bounded function.
\end{proof}
\subsection{Joint posterior contraction}

In this section, we will show that~$\Theta_n\times\mathcal H_n$ from~\eqref{eq:ThetaNHNDefT} and \eqref{eq:ThetaNHNDefH}
are concentration sets for the joint posterior~$P_{W;\tau_0,\tau_1}^n$
from Theorem~\ref{eq:bvmLinearModel},
which is formulated as the main result in Theorem~\ref{th:jointContr}.
The proof consists of showing that~$\varepsilon_n$ from~\eqref{eq:varepsilonn} is a joint contraction rate
in the Hellinger topology (Section~\ref{sec:hellingerContr})
and that the marginal posterior of~$\vect\theta$ contracts at parametric rate 
(Section~\ref{sec:marginalParametric}).

\subsubsection{Hellinger contraction}\label{sec:hellingerContr}
In this section we prove the joint contraction rates in terms of the root average squared Hellinger distance~$d_{n,H}$, which
is defined as
\begin{equation}\label{eq:hellingerDefSemi}
  d_{n,H}^2\big((\vect\theta_0,f_0),(\vect\theta,f)\big)
  :=\frac 1 n\sum_{j=1}^nd_H^2\big( p_j(\cdot|\vect\theta_0,f_0),p_j(\cdot|\vect\theta,f) \big),
\end{equation}
where~$2\,d_H^2(p,q)=\int\left( \sqrt{p(z)}-\sqrt{q(z)} \right)^2\,dz$, i.e.\ $d_H(p,q)$ denotes the Hellinger distance between two probability densities~$p,q$, and~$p_j(\cdot|\vect\theta,f)$ denoting the probability density of
the~$j$-th Fourier coefficient~$\tilde Z_j=(\matr F_n\vect Z_n)_j$ under~$P_W^n(\cdot|\vect\theta,f)$.
Before we state the main theorem of this section we first derive a lower bound for the prior probability of Kullback-Leibler-type neighborhoods (under the Whittle likelihood) of the true parameters.

Denote by~$\E_{W;0}$ and~$\Var_{W;0}$ the mean and variance respectively
under the Whittle-likelihood ~$P_{W;0}^n$, consider the KL divergence
and associated variance term
\begin{align*}
  K_n\big( (\vect\theta_0,f_0),(\vect\theta,f) \big)
  &:= \frac{1}{n}\E_{W;0}\left[\log\frac{p_W^n(\vectt Z_n|\vect\theta_0,f_0)}{p_W^n(\vectt Z_n|\vect\theta,f)}\right]\\
  V_n\big( (\vect\theta_0,f_0),(\vect\theta,f) \big)
  &:= \frac{1}{n}\Var_{W;0}\left[\log\frac{p_W^n(\vectt Z_n|\vect\theta_0,f_0)}{p_W^n(\vectt Z_n|\vect\theta,f)}\right]
\end{align*}
and, for~$\varepsilon>0$, the KL-type neighborhood
\begin{equation}\label{eq:klTypeNeighborhood}
  B_n(\vect\theta_0,f_0,\varepsilon)
  := \left\{ (\vect\theta,f)\in\mathcal A_n\colon K_n\big( (\vect\theta_0,f_0),(\vect\theta,f) \big)<\varepsilon^2,
  V_n\big( (\vect\theta_0,f_0),(\vect\theta,f) \big)<\varepsilon^2 \right\}.
\end{equation}

\begin{Lemma}\label{lemma:prior:neighb}
	Under the assumptions of Theorem~\ref{eq:bvmLinearModel}, it holds for any sequence $\varepsilon_n=\rho n^{-s}$ with $0<s<\frac{a-1}{2a}$, $a$ as in Assumption~\ref{ass:fModel} and $\rho>0$
	\begin{equation}\label{eq:priorMassThetaF}
  P_{\tau_0,\tau_1}\big( B_n(\vect\theta_0,f_0,\varepsilon_n) \big)
  \geq \exp(-cn\varepsilon_n^2)
\end{equation}
 for a positive constant~$c$ for~$n$ large enough.
\end{Lemma}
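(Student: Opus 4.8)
The plan is to establish the prior mass lower bound by exploiting the product structure of the prior $P_{\tau_0,\tau_1}(d\vect\theta,df)=P(d\vect\theta)P_{\tau_0,\tau_1}(df)$ and the near-product structure of the Kullback--Leibler quantities $K_n$ and $V_n$. First I would compute $K_n$ and $V_n$ explicitly. Since the Whittle likelihood \eqref{eq:whittleLinearModel} is Gaussian with diagonal covariance $\matr D_n(f)$ and mean $\matrt X_n\vect\theta$, the log-likelihood ratio splits into a mean-discrepancy part (quadratic in $\vect\theta-\vect\theta_0$) and a variance-discrepancy part (depending on $f$ versus $f_0$ through the diagonal entries). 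Taking expectations under $P_{W;0}^n$, I expect $K_n$ to decompose as a sum of a term controlled by $\|\matrt X_n(\vect\theta-\vect\theta_0)\|^2/n$ weighted by $\matr D_{n,0}^{-1}$, which by Assumption~\ref{ass:designMatrix} and boundedness of $f_0$ is $O(\|\vect\theta-\vect\theta_0\|^2)$, plus a purely spectral term of the form $\frac{1}{n}\sum_j \psi(f_0(\lambda_j)/f(\lambda_j))$ where $\psi(x)=x-1-\log x$ is the standard Gaussian KL contribution. The latter is bounded above by a constant times $\|f/f_0-1\|_\infty^2 \lleq \|f-f_0\|_\infty^2$ using $\tau_0\le f_0\le\tau_1$ and $f\in\mathcal C_{\tau_0,\tau_1}$. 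A parallel computation bounds $V_n$ by the same type of quantities.

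Having reduced both $K_n$ and $V_n$ to sums of a $\vect\theta$-part and an $f$-part, the plan is to intersect two simpler events: an event $\{\|\vect\theta-\vect\theta_0\|\lleq\varepsilon_n\}$ and an event $\{\|f-f_0\|_\infty\lleq\varepsilon_n\}$, each forcing the corresponding half of $K_n$ and $V_n$ below a constant multiple of $\varepsilon_n^2$. Since the prior is a product, the prior mass of the intersection factorizes, so it suffices to lower bound each marginal prior mass separately. For the $\vect\theta$-marginal, Assumption~\ref{ass:thetaPrior} guarantees a continuous, strictly positive Lebesgue density near $\vect\theta_0$, so a ball of radius proportional to $\varepsilon_n$ has prior mass $\ggeq \varepsilon_n^r \geq \exp(-c_1 n\varepsilon_n^2)$ for $n$ large, since $r\log(1/\varepsilon_n)=O(\log n)=o(n\varepsilon_n^2)$ under $\varepsilon_n=\rho n^{-s}$ with $s<\tfrac{a-1}{2a}<\tfrac12$.

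The main obstacle is the spectral prior mass, i.e.\ showing $P_{\tau_0,\tau_1}\big(\|f-f_0\|_\infty\le C\varepsilon_n\big)\ge\exp(-c_2 n\varepsilon_n^2)$ for the (truncated) Bernstein--Gamma prior. This is the genuinely nonparametric estimate and is where the approximation-theoretic content lives. The strategy is standard for Bernstein-type priors: Assumption~\ref{ass:fModel} makes $f_0$ differentiable with $(a-1)$-H\"older derivative, so the Bernstein polynomial approximant of degree $k$ approximates $f_0$ in sup-norm at rate $O(k^{-(a-1)/?})$ — more precisely the classical Bernstein approximation rate for $C^1$ functions with H\"older derivative — so choosing $k\asymp n^{2s/\,}$ (tuned so the bias is $\lleq\varepsilon_n$) reduces matters to placing the Gamma-process weights $\Phi(I_{j,k})$ close to the target coefficients. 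One then lower bounds the probability that all $k$ increments of the Gamma process land in small intervals around their targets, using the independent-increment structure and the boundedness of $\alpha,\beta$ from Assumption~\ref{ass:GPBVM}; this yields a bound like $\exp(-c\,k\log(1/\varepsilon_n))$. Combining with the prior on $k$ from Assumption~\ref{ass:kPrior}, whose lower tail $p(k)\ge A_1\exp(-\kappa_1 k\log k)$ contributes $\exp(-\kappa_1 k\log k)$, the dominant cost is of order $k\log k \lleq k\log n$. The constraint $s<\tfrac{a-1}{2a}$ is precisely what guarantees $k\log n = o(n\varepsilon_n^2)$ so that this cost is absorbed into $\exp(-c n\varepsilon_n^2)$. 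Finally I would account for the truncation factor $P(\mathcal C_{\tau_0,\tau_1})^{-1}$ in \eqref{eq:truncationForBvMSet}: since this is a fixed positive constant not depending on $n$ (and $\{\|f-f_0\|_\infty\le C\varepsilon_n\}$ eventually lies inside $\mathcal C_{\tau_0,\tau_1}$ because $\tau_0<\min f_0\le\max f_0<\tau_1$), it only improves the bound. Much of this Bernstein-prior small-ball computation can be imported essentially verbatim from the analogous estimates in \cite{meier} and \cite{kirch2019}, so the novelty here is mainly the clean separation of the $\vect\theta$- and $f$-contributions afforded by the product prior and the Gaussian structure of the Whittle likelihood.
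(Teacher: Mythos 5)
Your overall architecture — bounding $K_n$ and $V_n$ by a $\vect\theta$-term plus an $f$-term, factorizing over the product prior, handling the $\vect\theta$-ball via Assumption~\ref{ass:thetaPrior}, and reducing the spectral small-ball to a Bernstein-approximation step plus a Gamma-process increment bound plus the $k$-prior — is exactly the paper's route. But there is one genuine gap, and it sits precisely where you wave it away: the truncation. You assert that $\{\|f-f_0\|_\infty\le C\varepsilon_n\}$ eventually lies inside $\mathcal C_{\tau_0,\tau_1}$ because $\tau_0<\min f_0\le\max f_0<\tau_1$. That is false: by \eqref{eq:truncationSetForBvm}, membership in $\mathcal C_{\tau_0,\tau_1}$ requires $\|f\|_L=\|f\|_\infty+\|\Delta f\|_\infty\le\tau_1$, i.e.\ a bound on the Lipschitz seminorm, and a sup-norm ball around $f_0$ contains functions with arbitrarily large Lipschitz constants (in particular, Bernstein mixtures of growing degree $k_n$ can oscillate steeply while staying sup-norm close to $f_0$). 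Consequently $P_{\tau_0,\tau_1}\bigl(B^\infty_{C\varepsilon_n}(f_0)\bigr)=P\bigl(B^\infty_{C\varepsilon_n}(f_0)\cap\mathcal C_{\tau_0,\tau_1}\bigr)/P(\mathcal C_{\tau_0,\tau_1})$, and the numerator is \emph{not} lower-bounded by $P\bigl(B^\infty_{C\varepsilon_n}(f_0)\bigr)$; the truncation does not ``only improve the bound.'' You must exhibit enough prior mass on functions that are simultaneously close to $f_0$ \emph{and} satisfy the Lipschitz and lower-bound constraints.

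The paper closes this gap by working with Lipschitz-norm balls throughout: a small ball $B^L_{c\varepsilon_n}(f_0)$ \emph{is} contained in $\mathcal C_{\tau_0,\tau_1}$ for $n$ large, so $P(F\cap\mathcal C_{\tau_0,\tau_1})\ge P(B^L_{c\varepsilon_n}(f_0))$, and one then lower-bounds the untruncated prior mass of the Lipschitz ball via $\|f-f_0\|_L\le 3k_n^2\sum_j|\Phi(I_{j,k_n})-F_0(I_{j,k_n})|+\|f_{k_n}-f_0\|_L$. This substitution is not cosmetic: the Bernstein approximation rate in the Lipschitz norm is $k_n^{-(a-1)/2}$, strictly slower than the sup-norm rate $k_n^{-a/2}$ you invoke, and with the choice $k_n=n\varepsilon_n^2/\log n$ the requirement $k_n^{-(a-1)/2}\lleq\varepsilon_n$ is exactly equivalent to $s\le\frac{a-1}{2a}$. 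So the exponent in the statement of the lemma is dictated by the Lipschitz-norm approximation step that your sup-norm-only argument skips; a correct version of your proof must either reproduce this Lipschitz-ball argument or otherwise control $\|\Delta f\|_\infty$ and $\inf f$ on the event whose prior mass you are bounding.
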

\begin{proof}
For $\|f_1-f_0\|_{\infty}\le \tau_0$ it holds that $f_0(\omega)/f_1(\omega)\ge 1/2$, such that by Lemma~\ref{lemma:KLhelpRealValued}
\begin{align*}
 & nK_n\big( (\vect\theta_0,f_0),(\vect\theta,f) \big)
  \lleq \| \matr D_n-\matr D_{n,0} \|_F^2 + \|\matrt X_n(\vect\theta-\vect\theta_0)\|^2\\
 & \lleq  n\|f-f_0\|_\infty^2 + \|\matr X_n\|^2\, \|\vect\theta-\vect\theta_0\|^2
\end{align*}
with~$\matr D_{n,0}=\matr D_n[f_0]$ as before and an analogous assertion for $V_n\big( (\vect\theta_0,f_0),(\vect\theta,f) \big)$,
where the proportionality constants do not depend on~$n$.
Consequently, for some suitable constant $c>0$ (and $B_{\alpha}(\vect\theta_0)=\{\vect\theta: \|\vect\theta-\vect\theta_0\|\leq \alpha\}$, $B^{\infty}_{\alpha}(f_0)=\{f: \|f-f_0\|_{\infty}\leq\alpha\}$) \begin{align*}
	&B_{c\varepsilon_n\, \frac{\sqrt{n}}{\|\matr X_n\|}}(\vect\theta_0)\times B^{\infty}_{c\varepsilon_n}(f_0)\subset B_n(\vect\theta_0,f_0,\varepsilon_n).
\end{align*}
By the independence assumption lower bounds on each of the two balls also give a lower bound of
the prior mass in \eqref{eq:priorMassThetaF}. 
By Assumption~\ref{ass:thetaPrior} it holds
\begin{align*}
	&P\left( B_{c\varepsilon_n\, \frac{\sqrt{n}}{\|\matr X_n\|}}(\vect\theta_0) \right)\geq
	P\left( \left\{\vect\theta: \|\vect\theta - \vect\theta_0\|_{\infty}\leq c \varepsilon_n\, \frac{1}{\|\matr X_n\|} \right\} \right)\ggeq	\frac{\varepsilon_n^r}{\|\matr X_n\|^r}\\
	&\ggeq \exp\left( r \log(\varepsilon_n)-r \log(\|\matr X_n\|) \right).
\end{align*}
Because $\varepsilon_n\to 0$ with polynomial (in $n$) rate and $n\varepsilon_n^2\to \infty$ also with polynomial (in $n$) rate, it holds $\log(\varepsilon_n)\ggeq - n\varepsilon_n^2$. Furthermore, by Assumption~\ref{ass:designMatrix}, it holds $\log\left( \|\matr X_n\| \right)=o(n\varepsilon_n^2)$ showing that  for some suitable constant $c_1>0$
\begin{align*}
	P\left( B^{\infty}_{c\varepsilon_n\, \frac{\sqrt{n}}{\|\matr X_n\|}}(\vect\theta_0)\right)\geq\exp(-c_1n\varepsilon_n^2).
\end{align*}
Next, we show that 
\begin{align}\label{priormass}
	P_{\tau_0,\tau_1}(B^{\infty}_{c\varepsilon_n}(f_0))\geq\exp(-c_2n\varepsilon_n^2)
\end{align}
for some $c_2>0$, thus completing the proof of \eqref{eq:priorMassThetaF}.
Because the sup-norm is bounded by the Lipschitz-norm and because $f\in C_{\tau_0,\tau_1}$ for any $f$ close enough (in the Lipschitz-norm) to $f_0$ as in Theorem~\ref{eq:bvmLinearModel}, it holds (with $B^{L}_{\alpha}(f_0)=\{f: \|f-f_0\|_{L}\leq\alpha\}$) for $n$ large enough
\begin{align*}
	&P_{\tau_0,\tau_1}(B^{\infty}_{c\varepsilon_n}(f_0))\geq P_{\tau_0,\tau_1}(B^{L}_{c\varepsilon_n}(f_0))\geq P(B^{L}_{c\varepsilon_n}(f_0)),
\end{align*}
and we continue to bound the right hand side.
As in the proof of Lemma~7.10 in \cite{meierPaper} we get for any $k_n$
\begin{align*}
	&\|f-f_0\|_L\le 3k_n^2\sum_{j=1}^{k_n}|\Phi(I_{j,k_n})-F_0(I_{j,k_n})|+\|f_{k_n}-f_0\|_L,
\end{align*}
where 
$f_{k}(\omega)=\sum_{j=1}^{k}F_0(I_{j,k}) b(\omega/\pi|j,k-j+1).$ While the convergence rate of the Bernstein polynomials $f_{k_n}(\omega)$ to the true function in the sup-norm is of order $k_n^{-a/2}$ (see Theorem 1.6.2 of \cite{lorentz2012bernstein}), the rate in the Lipschitz norm is of order $k_n^{-(a-1)/2}$ -- which is the rate of convergence of the derivative of the Bernstein polynomials to the derivative of the true function under the given assumptions. This follows from Theorem 1.6.1  of \cite{lorentz2012bernstein} in combination with a slight refinement of the arguments in the beginning of Section 1.8. Choosing $k_n=n\varepsilon_n^2/\log(n)$ yields
\begin{align*}
	&	&\|f_{k_n}-f_0\|_L\lleq k_n^{-(a-1)/2}=\rho^{-a}\,\varepsilon_n \log(n)^{(a-1)/2} n^{-(a-1-2as)/2},
\end{align*}
such that $\|f_{k_n}-f_0\|_L\le c\varepsilon_n/2$ for $n$ large enough by $a-1-2as>0$. Thus,
\begin{align*}
	&P(B^{L}_{c\varepsilon_n}(f_0))\ge P(k=k_n)\,P\left( \left\{\Phi: \sum_{j=1}^{k_n}|\Phi(I_{j,k_n})-F_0(I_{j,k_n})|\le \frac{c\,\varepsilon_n}{6\, k_n^2} \right\}\right).
\end{align*}
By Assumption~\ref{ass:kPrior} it holds $P(k=k_n)\geq A_1\exp(- \kappa_1\, k_n\log(k_n))\geq A_1\exp(-\tilde c_1 k_n\log n)$ for some suitable $\tilde c_1$, and by analogous arguments to Lemma~7.12 in \cite{meierPaper} we get for suitable constants $\tilde{C}_2,\tilde c_2>0$ (noting that $\varepsilon_n/k_n^3$ converges to $0$ polynomially in $n$)
\begin{align*}
	&P\left( \left\{\Phi: \sum_{j=1}^{k_n}|\Phi(I_{j,k_n})-F_0(I_{j,k_n})|\le \frac{c\,\varepsilon_n}{6\, k_n^2} \right\}\right)\geq \tilde{C}_2\exp(-\tilde{c}_2 k_n\log n).
\end{align*}
Consequently, for suitable constants $\tilde C_3,\tilde c_3>0$ 
\begin{align*}
	&P(B^{L}_{c\varepsilon_n}(f_0))\ge \tilde C_3 \exp(-\tilde c_3 k_n \log n),
\end{align*}
which yields \eqref{priormass} because by definition $k_n\log n=n\varepsilon_n^2$.
\end{proof}

We state the result of the theorem below with respect to  the Whittle-likelihood $P_{W;0}^n=P_W^n(\cdot|\vect\theta_0,f_0)$ as in  the proof of Theorem~\ref{eq:bvmLinearModel}. However,  the assertion also holds with respect to $P_0^n$-probability by contiguity. 

\begin{Theorem}\label{lemma:jointHellinger}
	Under the assumptions of Theorem~\ref{eq:bvmLinearModel}, it holds
\[
P_{W;\tau_0,\tau_1}^n\Big( \left\{ (\vect\theta,f) \colon d_{n,H}\big( (\vect\theta,f),(\vect\theta_0,f_0) \big) \geq n^{-s} \right\} | \vect Z_n \Big) \to 0
\]
in~$P_{W;0}^n$-probability as~$n\to\infty$ for any $0<s<\frac{a-1}{2a}$ with $a$ as in Assumption~\ref{ass:fModel}.
\end{Theorem}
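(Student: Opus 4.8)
The plan is to obtain the statement from a general posterior contraction theorem for independent, non-identically distributed observations, formulated for the root-average-squared Hellinger distance $d_{n,H}$, such as Theorem~8.23 in \cite{ghosal2017fundamentals}. Under the Whittle likelihood the Fourier coefficients are independent Gaussians that are not identically distributed, so this is exactly the regime such a theorem covers, and the averaged functionals $K_n,V_n$ entering the definition of $B_n(\vect\theta_0,f_0,\varepsilon)$ are precisely the quantities appearing there. That theorem yields contraction at a rate $\varepsilon_n$ once three conditions hold: a prior-mass (Kullback--Leibler) condition, a sieve whose complement carries negligible prior mass, and a metric-entropy bound on the sieve. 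Since the claim is required for \emph{every} $0<s<\frac{a-1}{2a}$, I would fix an intermediate exponent $s'$ with $s<s'<\frac{a-1}{2a}$, establish contraction at the faster rate $\varepsilon_n:=n^{-s'}$, and then absorb the slower target into a diverging factor by writing $\{d_{n,H}\ge n^{-s}\}=\{d_{n,H}\ge M_n\varepsilon_n\}$ with $M_n:=n^{s'-s}\to\infty$; the general theorem then forces the posterior mass of such sets to vanish.

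The prior-mass condition is exactly Lemma~\ref{lemma:prior:neighb}, which gives $P_{\tau_0,\tau_1}(B_n(\vect\theta_0,f_0,\varepsilon_n))\ge\exp(-c\,n\varepsilon_n^2)$ for $\varepsilon_n=n^{-s'}$ with $s'<\frac{a-1}{2a}$. For the sieve I would take $\mathcal P_n:=\{(\vect\theta,f)\colon\|\vect\theta\|\le M_n^\ast,\ f\in\mathcal C_{\tau_0,\tau_1}\}$, where the $f$-component is already confined to $\mathcal C_{\tau_0,\tau_1}$ by the truncated prior $P_{\tau_0,\tau_1}$, and the radius $M_n^\ast$ is chosen so that $\log M_n^\ast\asymp n\varepsilon_n^2$. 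Because Assumption~\ref{ass:thetaPrior} guarantees only $\E\|\vect\theta\|^\nu<\infty$, Markov's inequality yields the polynomial tail $P(\|\vect\theta\|>M_n^\ast)\lleq (M_n^\ast)^{-\nu}=\exp(-\nu\log M_n^\ast)$, which is exponentially small in $n\varepsilon_n^2$ and hence negligible against $\exp(-c\,n\varepsilon_n^2)$ once the implied constant in $\log M_n^\ast\asymp n\varepsilon_n^2$ is taken large enough relative to $c$.

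For the entropy I would verify the \emph{local} entropy condition, comparing $d_{n,H}$ with the Euclidean metric on $\vect\theta$ and the sup-norm on $f$. This rests on a Gaussian Hellinger bound: each summand $d_H^2\big(p_j(\cdot|\vect\theta_0,f_0),p_j(\cdot|\vect\theta,f)\big)$ is the Hellinger distance between two univariate Gaussians and is therefore controlled by the squared mean difference $(\matrt X_n(\vect\theta-\vect\theta_0))_j^2$ and the squared variance difference $|f(\omega_j)-f_0(\omega_j)|^2$, uniformly over $\mathcal C_{\tau_0,\tau_1}$ by Assumption~\ref{ass:fModel} (cf.\ the estimates behind Lemma~\ref{lemma:KLhelpRealValued} and Lemma~\ref{lemma:cfLike}). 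The crucial point is that the parametric direction contributes only bounded local entropy: the shell $\{\varepsilon\le d_{n,H}\le 2\varepsilon\}$ confines the $\vect\theta$-component to an annulus whose covering number at resolution proportional to $\varepsilon$ is $O(1)$ in $\mathbb R^r$, \emph{independently} of the sieve radius $M_n^\ast$. The nonparametric direction reduces to the sup-norm covering of the Lipschitz ball $\mathcal C_{\tau_0,\tau_1}$, whose log-covering number at resolution $\varepsilon\ge\varepsilon_n$ is of order $1/\varepsilon_n=n^{s'}$, which is $o(n\varepsilon_n^2)=o(n^{1-2s'})$ precisely because $s'<\frac{a-1}{2a}\le\frac14<\frac13$. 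Thus the local entropy condition holds.

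The main obstacle I anticipate is the uniform Gaussian Hellinger comparison underpinning the entropy step: one must show that $d_{n,H}$ is two-sidedly equivalent, uniformly in $n$ and over $f\in\mathcal C_{\tau_0,\tau_1}$, to a metric built from the frequency-domain design $\matrt X_n$ (for $\vect\theta$) and the sup-norm (for $f$), which requires uniform control of $\matr D_n(f)^{-1}$ and of $\matrt X_n$ via Lemma~\ref{lemma:cfLike} and Assumption~\ref{ass:designMatrix}. The delicate part is confirming that the three conditions are simultaneously satisfiable with only a \emph{moment} tail on $\vect\theta$: the growing sieve radius $M_n^\ast$ must be sub-exponential in $n\varepsilon_n^2$ (so that $\log M_n^\ast=O(n\varepsilon_n^2)$) yet large enough that $(M_n^\ast)^{-\nu}$ is exponentially small. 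This is reconciled by the observation above that $M_n^\ast$ enters only the sieve-complement condition and \emph{not} the entropy bound, since the parametric local entropy is bounded; the auxiliary estimate $\log\|\matr X_n\|=o(n\varepsilon_n^2)$ from Assumption~\ref{ass:designMatrix} ensures the remaining $\vect\theta$-covering logarithms stay within budget. Once the general theorem applies, the transfer from $P_{W;0}^n$- to $P_0^n$-probability is immediate by contiguity, as already noted before the statement.
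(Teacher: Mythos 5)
Your overall architecture coincides with the paper's: a Ghosal--van der Vaart contraction theorem for independent non-i.i.d.\ observations in the metric $d_{n,H}$, prior mass from Lemma~\ref{lemma:prior:neighb}, a sieve for $\vect\theta$ of exponentially large radius whose complement is handled by Markov's inequality and the $\nu$-th moment from Assumption~\ref{ass:thetaPrior}, and an entropy bound split into a parametric and a nonparametric part. Your treatment of the nonparametric part is in fact a legitimate simplification: since the truncated prior is already supported on $\mathcal C_{\tau_0,\tau_1}$, no Bernstein-degree sieve is needed, and the Kolmogorov--Tikhomirov bound $\log N(\epsilon,\mathcal C_{\tau_0,\tau_1},\|\cdot\|_\infty)\lleq \epsilon^{-1}$ at resolution $\epsilon\asymp\varepsilon_n^2$ gives $O(n^{2s'})=o(n\varepsilon_n^2)$ because $s'<(a-1)/(2a)\le 1/4$; the paper instead restricts to Bernstein expansions of degree $\le k_n$ and invokes Lemma~B.4 of Choudhuri et al., at the cost of an extra sieve-complement estimate via Assumption~\ref{ass:kPrior}.

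The genuine gap is in the parametric entropy step and the way you use it to resolve the tension with the sieve radius. Your claim that the shell $\{\varepsilon\le d_{n,H}\le 2\varepsilon\}$ confines $\vect\theta$ to a Euclidean annulus of radius $\propto\varepsilon$, so that the local entropy is $O(1)$ \emph{independently of} $M_n^\ast$, requires a two-sided equivalence between $d_{n,H}$ and $\|\matrt X_n(\vect\theta-\vect\theta_0)\|/\sqrt n$. That equivalence fails: the correct lower bound (Corollary~\ref{lemma:hellingerFromBelowBvm}) is $n^{-1}\sum_j\bigl(1-\exp(-c|(\matrt X_n(\vect\theta-\vect\theta_0))_j|^2)\bigr)$, which saturates at $1$, so for $\varepsilon$ of constant order the set $\{d_{n,H}\le 2\varepsilon\}$ does not constrain $\vect\theta$ at all and the local entropy is of the same order as the global one, namely $\asymp r\log M_n^\ast$. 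Hence $M_n^\ast$ \emph{does} enter the entropy budget, and your reconciliation collapses: with only $\E\|\vect\theta\|^\nu<\infty$ for some possibly tiny $\nu>0$, the complement condition forces $\log M_n^\ast\ge\frac{2+c}{\nu}\,n\varepsilon_n^2$ while the entropy condition forces $\log M_n^\ast\lleq\frac{1}{2r}\,n\varepsilon_n^2$, and these are incompatible unless $\nu$ is large. The paper escapes this by verifying the sieve-complement condition (Theorem~8.20 of Ghosal--van der Vaart) at a strictly faster auxiliary rate $\bar\varepsilon_n=n^{-\bar s}$ with $s<\bar s<(a-1)/(2a)$: since $n\bar\varepsilon_n^2=o(n\varepsilon_n^2)$, \emph{any} decay of the form $\exp(-\nu\rho_1 n\varepsilon_n^2)$ beats $\exp(-(2+c)n\bar\varepsilon_n^2)$, so $\rho_1$ can be taken small enough for the entropy bound. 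You would need to incorporate this (or an equivalent) device; your intermediate exponent $s'$ is used in the opposite direction and does not address the issue. A minor additional omission is the existence of tests, which for the root-average-squared Hellinger metric with independent coordinates follows from the entropy bound (the paper cites Lemma~2 of Ghosal--van der Vaart 2007).
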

\begin{proof}
The proof is based on a general contraction rate theorem for independent, non-identically
distributed random variables (Theorem 8.19 and 8.20 in \cite{ghosal2017fundamentals}). First, define the contraction rates by 
\begin{equation}\label{eq:varepsilonn}
	\varepsilon_n= n^{-s} \quad\text{with }0<s<\frac{a-1}{2a}.
\end{equation}
Consider the following sieve structure~$\mathcal A_n$ on the parameter space~$\mathcal A:=\Theta\times\mathcal H$:\begin{equation}\begin{split}\label{eq:sieveDefThetaF}
  \mathcal A_n &:= \mathcal A_n^{(1)} \times \mathcal A_n^{(2)} \\
  \mathcal A_n^{(1)} &:= \left\{ \vect\theta\in\mathbb R^r \colon \|\vect\theta\|<\exp(\rho_1n\varepsilon_n^2) \right\}, \\
  \mathcal A_n^{(2)} &:= \bigcup_{k=1}^{k_n}\left\{ \mathfrak B(k,\vect w) \colon \vect w \in \mathbb R_{\geq 0}^k \right\} \cap \mathcal C_{\tau_0,\tau_1}, \quad k_n=\rho_2 n^{1-2s}/\log(n) \end{split}\end{equation}
with the Bernstein polynomial expansion operator~$\mathfrak B(k,\vect w):=\sum_{j=1}^kw_jb(\cdot|j,k-j+1)$ 
and positive constants~$\rho_1,\rho_2$ to be specified later. Clearly, Assumption (i) of Theorem 8.19 in \cite{ghosal2017fundamentals} is fulfilled for these choices by Lemma~\ref{lemma:prior:neighb}.
Next, we will establish the entropy bound  for the sieve~$\mathcal A_n$
in the Hellinger topology:
\begin{equation}\label{eq:metricEntropyThetaF}
   \log N\left(\xi\varepsilon_n, \mathcal A_n ,d_{n,H}\right)\leq n\varepsilon_n^2
\end{equation}
for every~$\xi>0$,
where $N$ denotes the covering number.
This implies Assumption~(ii) of  Theorem 8.19 in \cite{ghosal2017fundamentals}.

First, by Lemma~\ref{lemma:hellingerBoundRealValuedSemiparametric}
\[
  d_H^2\big( p_j(\cdot|\vect\theta_0,f_0), p_j(\cdot|\vect\theta,f) \big)
  \lleq |f(\omega_j)-f_0(\omega_j)| + |(\matrt X_n(\vect\theta-\vect\theta_0))_j|
\]
for~$j=1,\ldots,n$, yielding (where $\|\cdot\|_1$ is the vector $1$-norm)
\[
  d_{n,H}^2\big((\vect\theta_0,f_0),(\vect\theta,f)\big)
  \lleq \|f-f_0\|_\infty + \frac{1}{n}\left\| \matrt X_n(\vect\theta-\vect\theta_0) \right\|_1
  \lleq \|f-f_0\|_\infty + \frac{1}{\sqrt{n}} \left\| \matrt X_n\right\|\,\left\|\vect\theta-\vect\theta_0 \right\|,
\]
because by the Cauchy-Schwarz inequality it holds $\|\vect x\|_1\leq \sqrt{n}\, \|\vect x\|$ for any vector $\vect x$ of dimension $n$.
This implies (see e.g.\ Lemma~B.32 in the appendix of~\cite{meier}) for some positive constant~$c$ 
\begin{equation}\label{eq:metricEntropyZwischenschritt}
  \log N(\xi\varepsilon_n,\mathcal A_n,d_{n,H})
  \leq \log N(c\xi^2n\varepsilon_n^2\| \matr X_n \|^{-1},\mathcal A_n^{(1)},\|\cdot\|)
  + \log N(c\xi^2\varepsilon_n^2,\mathcal A_n^{(2)},\|\cdot\|_\infty).
\end{equation}
An application of Proposition C.2 in \cite{ghosal2017fundamentals}  reveals -- recalling~$\mathcal A_n^{(1)}$ from~\eqref{eq:sieveDefThetaF} -- that
the first summand on the right hand side of~\eqref{eq:metricEntropyZwischenschritt}
is bounded from above by \[
  r\log 3+r\rho_1n\varepsilon_n^2-r\log(c\xi^2n\varepsilon_n^2)+r\log(\|\matr X_n\|)
\leq 2r\rho_1n\varepsilon_n^2
\]
for~$n$ large enough by Assumption~\ref{ass:designMatrix} and the fact that $n\varepsilon_n^2$ grows polynomially in $n$.
By choosing~$\rho_1$ small enough, this can be bounded from above by~$\frac{1}{2}n\varepsilon_n^2$.

By Lemma~B.4 in \cite{choudhuri} and because $\varepsilon_n$ falls polynomially in $n$, the second summand on the right hand side of~\eqref{eq:metricEntropyZwischenschritt}
is  $\lleq k_n\log(n)=\rho_2\,n\varepsilon_n^2$. Thus, the second summand is also bounded by ~$\frac{1}{2}n\varepsilon_n^2$ for~$\rho_2$ small enough.
This concludes~\eqref{eq:metricEntropyThetaF}.
\par

The existence of tests as required by Theorem~8.19 in \cite{ghosal2017fundamentals} follows e.g. from Lemma~2 in \cite{ghosal2007convergence}, such that all assumptions of that theorem are fulfilled showing that $\varepsilon_n$ is a contraction rate on the sieve $\mathcal{A}_n$. 
The sequences $M_n$ are not needed here because the assertion holds for any $s<(a-1)/2a$ (instead of an $-(a-1)/2a$-rate with appropriate Log-terms).

In order to complete the proof we will now show that the posterior on the complement of $\mathcal{A}_n$ is an asymptotic zero-set by using Theorem~8.20 in \cite{ghosal2017fundamentals}.
First, note that, 
\begin{align*}
	&\mathcal{A}_n^c=\left(\mathcal{A}_n^{(1)c}\times \mathcal{A}_n^{(2)}\right)\cup \left(\mathcal{A}_n^{(1)}\times \mathcal{A}_n^{(2)c}\right),
\end{align*}
and thus by prior independence
\begin{align}\label{eq_p_sets}
  P_{\tau_0,\tau_1}(\mathcal{A}_n^{c})
  \le P(\mathcal{A}_n^{(1)c}) + P_{\tau_0,\tau_1}(\mathcal{A}_n^{(2)c}).
\end{align}
By Assumption~\ref{ass:thetaPrior} and the Markov inequality
\[
P(\mathcal{A}_n^{(1)c})
= P(\|\vect \theta\|\ge \exp{\rho_1 n\epsilon_n^2})
\leq \exp(-\rho_1 \nu n \epsilon_n^2)\, \E\|\vect\theta\|^{\nu}.\]
Concerning the second probability in \eqref{eq_p_sets} by Assumption~\ref{ass:kPrior} it holds for some $\tilde{c}>0$ 
\[
P_{\tau_0,\tau_1}(\mathcal{A}_n^{(2)c}) \lleq \sum_{k>k_n}p(k)\lleq 
 \exp(-\tilde ck_n).
\]
Thus, for some  $\bar{c}>0$, $P_{\tau_0,\tau_1}\left(\mathcal{A}_n^c  \right)\lleq \exp(-\bar{c}n^{1-2s}/\log(n))$, such that for $\bar{\varepsilon}_n=n^{\bar{s}}$ with $s<\bar{s}<(a-1)/2a$ it holds by Lemma~\ref{lemma:prior:neighb}
\begin{align*}
	&\frac{P_{\tau_0,\tau_1}\left(\mathcal{A}_n^c  \right)}{ P_{\tau_0,\tau_1}\big( B_n(\vect\theta_0,f_0,\bar{\varepsilon}_n) \big) }\lleq \exp\left(-\bar{c}n^{1-2s}/\log(n)+cn^{1-2\bar{s}}  \right)=o\left( \exp(-2n\bar{\varepsilon}_n^2) \right).
\end{align*}
An application of Theorem~8.20 in \cite{ghosal2017fundamentals} completes the proof.\end{proof}

\subsubsection{Marginal contraction at parametric rate}\label{sec:marginalParametric}

In this section, we will establish concentration of the marginal posterior of~$\vect\theta$
at parametric rate.
We start with showing the result for the case of a flat prior of the form~$P(d\vect\theta)\propto 1$
in Lemma~\ref{lemma:marginalParametricFlat}
and extend this result to the setting of Theorem~\ref{eq:bvmLinearModel} in Lemma~\ref{lemma:marginalParametric}.

We state the result of the lemma below w.r.t.\ the Whittle-likelihood,  in line with the proof of Theorem~\ref{eq:bvmLinearModel} but note the assertion also holds in $P_0^n$-probability using the contiguity argument.
\begin{Lemma}\label{lemma:marginalParametricFlat}
Let~$\vect Z_n=\matr X_n\vect\theta_0+\vect e_n$ for~$\vect\theta_0\in\mathbb R^r$,
where the design matrix~$\matr X_n$ is known and fulfils Assumption~\ref{ass:designMatrix}
and~$\{e_t\}$ is a stationary mean zero Gaussian time series with
spectral density~$f_0$ fulfilling Assumption~\ref{ass:fModel}.
Denote the true distribution of~$\vect Z_n$ by~$P_0^n$.
Let~$\tau_0\in(0,\min_{\omega}f_0(\omega))$ and~$\tau_1\in(\max_{\omega}f_0(\omega),\infty)$.
Let the prior on~$(\vect\theta,f)$ be given as~$P_{\tau_0,\tau_1}(d\vect\theta,df):=P_{\tau_0,\tau_1}(df)$
with~$P_{\tau_0,\tau_1}$ as in~\eqref{eq:truncationForBvMSet}
with Bernstein-Gamma prior~$P(df)$, where the prior on~$\Phi$ fulfils 
Assumption~\ref{ass:GPBVM} and the prior on~$k$ fulfils Assumption~\ref{ass:kPrior}.
Then it holds
\[
\widehat{P}_{W;\tau_0,\tau_1}^n\left( \{\vect\theta\in\mathbb R^r\colon\|\matr X_n(\vect\theta-\vect\theta_0)\|\geq M_n\}\times\mathcal{H}_n|\vect Z_n \right) \to 0
\]
in~$P_{W;0}^n$-probability as~$n\to\infty$ for every sequence~$(M_n)$ of positive numbers with~$M_n\to\infty$, where $\widehat{P}_{W;\tau_0,\tau_1}^n$ indicates the posterior under the flat prior for $\vect\theta$ and $\mathcal{H}_n$ is defined in \eqref{eq:ThetaNHNDefH}.
\begin{proof}
Following the arguments from Theorem~6.2 in~\cite{bickel2012semiparametric},
it suffices to show that the conditional posterior of~$\vect\theta$
contracts uniformly (with respect to~$f \in  \mathcal{H}_n$) at parametric rate, i.e.~that
it holds
\begin{equation}\label{eq:marginalParametricFlat:toShow}
\sup_{f\in\mathcal H_n}\widehat{P}_{W;\tau_0,\tau_1}^n\left( \vect\theta\colon \|\matr X_n(\vect\theta-\vect\theta_0)\|\geq M_n|f,\vect Z_n \right) \to 0
\end{equation}
in $P_{W;0}^n$-probability for every sequence~$(M_n)$ with~$M_n\to\infty$.
First recall that under the flat prior on~$\vect\theta$, the conditional
posterior is equal to the conditional likelihood.
It then follows by analogous arguments as in Section 9.2.2 of \cite{christensen2011bayesian} that~$\widehat{P}_{W;\tau_0,\tau_1}^n(\cdot|f,\vect Z_n)$
has an~$r$-variate normal distribution with mean $\vecth\theta_n$ and covariance matrix $\matr \Sigma_n$ where
\begin{equation}\label{eq:hatthetan}
  \vecth\theta_n:=\matr \Sigma_n\matrt X_n^T\matr D_n^{-1}\vectt Z_n \in \mathbb R^r, \quad
  \matr \Sigma_n   :=(\matrt X_n^T\matr D_n^{-1}\matrt X_n)^{-1}\in\mathcal S_r^+.
\end{equation}
Thus by Markov's inequality, we have
\begin{align}\label{eq:hat_prob_upperbound}
&\widehat{P}_{W;\tau_0,\tau_1}^n(\vect\theta\in\mathbb{R}^r\colon\|\matr X_n(\vect\theta-\vect\theta_0)\|\geq M_n|f,\vect Z_n)\notag\\
&\leqslant \frac{1}{M_n^2}\E_{\widehat{P}_{W;\tau_0,\tau_1}}^n (\vect\theta-\vect\theta_0)^T\matrt X_n^T\matrt X_n (\vect\theta-\vect\theta_0)\notag\\
&= \frac{1}{M_n^2}\tr(\matrt X_n \matr \Sigma_n\matrt X_n^T)
+\frac{1}{M_n^2}(\vecth\theta_n-\vect\theta_0)^T\matrt X_n^T\matrt X_n (\vecth\theta_n-\vect\theta_0).
\end{align}
In view of $f\in\mathcal{H}_n\subset\mathcal{C}_{\tau_0,\tau_1}$, we have by definition of $\matr\Sigma_n$, the cyclic property of the trace and Lemma~\ref{lemma:cfLike}(c), that
\begin{align*}
\tr(\matrt X_n \matr \Sigma_n\matrt X_n^T)\leqslant \tau_1\tr(\matr D_n^{-1/2}\matrt X_n \matr \Sigma_n\matrt X_n^T \matr D_n^{-1/2})=r\tau_1,
\end{align*}
which implies $\sup_{f\in\mathcal H_n}M_n^{-2}\tr(\matrt X_n \matr \Sigma_n\matrt X_n^T)\to 0$. 
As for the second term of \eqref{eq:hat_prob_upperbound}, first note  that $\vecth\theta_n-\vect\theta_0 = \matr \Sigma_n\matrt X_n^T\matr D_n^{-1}\vectt e_n$, where $\vectt e_n=\matr F_n \vect e_n\sim N(\vect 0, \matr D_{n}(f_0))$ under $P_{W;0}^n$-probability. Therefore,
\begin{align}
&(\vecth\theta_n-\vect\theta_0)^T\matrt X_n^T\matrt X_n (\vecth\theta_n-\vect\theta_0)
=\vectt e_n^T \matr D_n^{-1} (\matrt X_n\matr \Sigma_n\matrt X_n^T)^2 \matr D_n^{-1} \vectt e_n .\label{eq_le7_ck}
\end{align}
Let $\preceq$ denote the Loewner partial order defined on the collection of symmetric matrices, i.e.\ for symmetric matrices $\matr A$, $\matr B$, $\matr A\preceq\matr B$ if and only if $\matr B - \matr A$ are positive semidefinite. Starting at $\tau_0I_{n}\preceq\matr D_n\preceq\tau_1I_{n}$, simple calculation, which involves (7.4) and Theorem 7.8 (3) of \cite{zhang11_matrix_book}, yields
\begin{align}\label{eq:loewner_bound_on_sigma_n}
\tau_0(\matrt X_n^T\matrt X_n)^{-1}\preceq\matr\Sigma_n\preceq\tau_1(\matrt X_n^T\matrt X_n)^{-1}
\end{align}
for any $f\in\mathcal{H}_n$. Moreover,  $\matrt X_n\matr \Sigma_n\matrt X_n^T$ and $\matrt Q_n :=\matrt X_n(\matrt X_n^T\matrt X_n)^{-1}\matrt X_n^T$ commute, because
\begin{align*}
	&\matrt X_n\matr \Sigma_n\matrt X_n^T\matrt Q_n=
\matrt X_n\matr \Sigma_n\matrt X_n^T
	= \matrt Q_n\matrt X_n\matr \Sigma_n\matrt X_n^T.
\end{align*}
Combined with \eqref{eq:loewner_bound_on_sigma_n}, (7.4) of \cite{zhang11_matrix_book} and Exercise 12.19 (c) of \cite{abadir_magnus05}, this yields $(\matrt X_n\matr \Sigma_n\matrt X_n^T)^2\preceq(\tau_1\matrt Q_n)^2=\tau_1^2\matrt Q_n$ and subsequently together with \eqref{eq_le7_ck}
\begin{align*}
&M_n^{-2}(\matrt X_n^T\matrt X_n)(\vecth\theta_n-\vect\theta_0)^T\matrt X_n^T\matrt X_n (\vecth\theta_n-\vect\theta_0)
\leqslant \tau_1^2 M_n^{-2}\vectt e_n^T \matr D_n^{-1} \matrt Q_n \matr D_n^{-1} \vectt e_n \notag\\
&=\tau_1^2 M_n^{-2}\|(\matr X_n^T\matr X_n)^{-\frac{1}{2}}\matrt X_n^T \matr D_n^{-1} \vectt e_n\|^2 
\leqslant \tau_1^2 M_n^{-2} \|(\matr X_n^T\matr X_n)^{-\frac{1}{2}}\|^2 \|\matrt X_n^T \matr D_n^{-1} \vectt e_n\|^2 \notag\\
&=\tau_1^2 M_n^{-2}\lambda_{\min}^{-1}(\matr X_n^T\matr X_n)\|\matrt X_n^T \matr D_n^{-1} \vectt e_n\|^2.
\end{align*}
Denoting  $\matr W_n(f):=n^{-1/2}M_n^{-1}\matrt X_n^T \matr D_n^{-1}(f)$, where unlike above we emphasize the dependence of $\matr D_n=\matr D_n(f)$ on $f$, by  Assumption~\ref{ass:designMatrix} we continue to upper bound
\begin{align}\label{eq:quad_form_upperbound}
&M_n^{-2}\lambda_{\min}^{-1}(\matr X_n^T\matr X_n)\|\matrt X_n^T \matr D_n^{-1} \vectt e_n\|^2 
= \frac{n}{\lambda_{\min}(\matr X_n^T\matr X_n)}\,\|\matr W_n(f) \vectt e_n\|^2 \notag\\
&=O(1)(\|\matr W_n(f_0) \vectt e_n\|^2 + \|\matr W_n(f) \vectt e_n - \matr W_n(f_0) \vectt e_n\|^2 ).
\end{align}

Regarding the first term of \eqref{eq:quad_form_upperbound}, we get by another application of Lemma~\ref{lemma:cfLike}(c)
\begin{align*}
&\E_{W;0}^n \|\matr W_n(f_0) \vectt e_n\|^2
= n^{-1}M_n^{-2}\E_{W;0}^n \vectt e_n^T \matr D_n^{-1}(f_0)\matrt X_n \matrt X_n^T \matr D_n^{-1}(f_0) \vectt e_n \\
&=n^{-1}M_n^{-2}\tr\left(\matr D_n^{-1}(f_0)\matrt X_n \matrt X_n^T\right)
=O(n^{-1}M_n^{-2})\tr(\matrt X_n^T\matrt X_n) \\
&=O(n^{-1}M_n^{-2})\lambda_{\max}(\matrt X_n^T\matrt X_n)=O(M_n^{-2}),
\end{align*}
where the last equality is due to Assumption~\ref{ass:designMatrix}.

It remains to prove $\sup_{f\in\mathcal H_n}\|\matr W_n(f) \vectt e_n - \matr W_n(f_0) \vectt e_n\|^2\to 0$ in $P_{W;0}^n$-probability. To this end, we will employ Lemma~\ref{lemma:uniform_convergence}.  Indeed, for any $f,g\in\mathcal{H}_n$, we have  that
\begin{align*}
&\|W_n(f) - W_n(g)\|_{F}
=n^{-1/2}M_n^{-1}\|\matrt X_n^T(\matr D_n^{-1}(f) - \matr D_n^{-1}(g))\|_{F} \\
&=n^{-1/2}M_n^{-1}\sqrt{\tr(\matrt X_n^T(\matr D_n^{-1}(f) - \matr D_n^{-1}(g))^2\matrt X_n)}
= O(n^{-1/2}M_n^{-1})\sqrt{\tr(\matrt X_n^T\matrt X_n)}\|f-g\|_{\infty} \\
&=O(n^{-1/2}M_n^{-1})\sqrt{\lambda_{\max}(\matr X_n^T\matr X_n)}\|f-g\|_{\infty}
=O(M_n^{-1})\|f-g\|_{\infty},
\end{align*}
where the last equality is  due to Assumption~\ref{ass:designMatrix}. This completes the proof by Lemma~\ref{lemma:uniform_convergence}.
\end{proof}
\end{Lemma}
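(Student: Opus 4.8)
The plan is to strip the joint statement down to a purely conditional one. Since the posterior mass carried by $\mathcal H_n$ is at most one, integrating the conditional posterior of $\vect\theta$ given $f$ against the marginal posterior of $f$ shows it suffices to establish, uniformly over $f\in\mathcal H_n$,
\[
\sup_{f\in\mathcal H_n}\widehat{P}_{W;\tau_0,\tau_1}^n\big(\|\matr X_n(\vect\theta-\vect\theta_0)\|\ge M_n\,\big|\,f,\vect Z_n\big)\to 0
\]
in $P_{W;0}^n$-probability, which is exactly the device of Theorem~6.2 in \cite{bickel2012semiparametric}. The simplification I would exploit is that under the flat prior $P(d\vect\theta)\propto 1$ the conditional posterior is the renormalised Gaussian conditional likelihood, so $\widehat{P}_{W;\tau_0,\tau_1}^n(\cdot\,|\,f,\vect Z_n)=N_r(\vecth\theta_n,\matr\Sigma_n)$ with the weighted-least-squares quantities $\vecth\theta_n=\matr\Sigma_n\matrt X_n^T\matr D_n^{-1}\vectt Z_n$, $\matr\Sigma_n=(\matrt X_n^T\matr D_n^{-1}\matrt X_n)^{-1}$ (cf.\ Section~9.2.2 in \cite{christensen2011bayesian}). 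Using $\|\matr X_n\cdot\|=\|\matrt X_n\cdot\|$ by orthogonality of $\matr F_n$, a Markov/second-moment bound then splits the target into a posterior-variance (trace) term $\tr(\matrt X_n\matr\Sigma_n\matrt X_n^T)$ and a squared-bias term centred at $\vecth\theta_n$, each to be handled separately.

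For the trace term I would invoke $f\le\tau_1$ on $\mathcal C_{\tau_0,\tau_1}$, i.e.\ $\matr D_n\preceq\tau_1 I_n$, and combine the cyclic property of the trace with Lemma~\ref{lemma:cfLike}(c): this rewrites $\tr(\matrt X_n\matr\Sigma_n\matrt X_n^T)$ as $\tau_1$ times the trace of a rank-$r$ orthogonal projection, hence at most $r\tau_1$, which vanishes after dividing by $M_n^2$ uniformly in $f$.

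For the bias term I would substitute $\vecth\theta_n-\vect\theta_0=\matr\Sigma_n\matrt X_n^T\matr D_n^{-1}\vectt e_n$, valid under $P_{W;0}^n$ where $\vectt e_n\sim N(\vect 0,\matr D_{n,0})$, to express the quadratic form as $\vectt e_n^T\matr D_n^{-1}(\matrt X_n\matr\Sigma_n\matrt X_n^T)^2\matr D_n^{-1}\vectt e_n$. The spectral sandwich $\tau_0 I_n\preceq\matr D_n\preceq\tau_1 I_n$ yields the Loewner bounds $\tau_0(\matrt X_n^T\matrt X_n)^{-1}\preceq\matr\Sigma_n\preceq\tau_1(\matrt X_n^T\matrt X_n)^{-1}$, and since $\matrt X_n\matr\Sigma_n\matrt X_n^T$ commutes with the projection $\matrt Q_n:=\matrt X_n(\matrt X_n^T\matrt X_n)^{-1}\matrt X_n^T$, squaring preserves the order and gives $(\matrt X_n\matr\Sigma_n\matrt X_n^T)^2\preceq\tau_1^2\matrt Q_n$. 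This reduces the bias term, after dividing by $M_n^2$, to controlling $M_n^{-2}\lambda_{\min}^{-1}(\matr X_n^T\matr X_n)\,\|\matrt X_n^T\matr D_n^{-1}\vectt e_n\|^2$.

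Writing $\matr W_n(f):=n^{-1/2}M_n^{-1}\matrt X_n^T\matr D_n^{-1}(f)$ and using $n/\lambda_{\min}(\matr X_n^T\matr X_n)=O(1)$ from Assumption~\ref{ass:designMatrix}, this is $O(1)\|\matr W_n(f)\vectt e_n\|^2$, which I would split around $f_0$. The static part is trivial: $\E_{W;0}\|\matr W_n(f_0)\vectt e_n\|^2=O(n^{-1}M_n^{-2})\,\lambda_{\max}(\matr X_n^T\matr X_n)=O(M_n^{-2})$ by Lemma~\ref{lemma:cfLike}(c) and Assumption~\ref{ass:designMatrix}, hence vanishes via Markov. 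The genuine obstacle is the fluctuation term $\sup_{f\in\mathcal H_n}\|\matr W_n(f)\vectt e_n-\matr W_n(f_0)\vectt e_n\|^2$, which must be shown to vanish uniformly over the entire infinite-dimensional sup-norm ball $\mathcal H_n$; I would dispatch it with Lemma~\ref{lemma:uniform_convergence}, verifying its hypotheses through the Lipschitz-in-$f$ estimate $\|\matr W_n(f)-\matr W_n(g)\|_F=O(M_n^{-1})\|f-g\|_\infty$ obtained once more from Lemma~\ref{lemma:cfLike}(c) together with $\sqrt{\lambda_{\max}(\matr X_n^T\matr X_n)/n}=O(1)$. Thus the only nontrivial step is this uniform control over the nuisance; the remainder is algebra on Gaussian quadratic forms and Loewner inequalities.
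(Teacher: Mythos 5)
Your proposal is correct and follows essentially the same route as the paper's proof: the reduction to uniform conditional contraction \`a la Bickel--Kleijn, the exact Gaussian form of the conditional posterior under the flat prior, the Markov bound split into trace and bias terms, the Loewner/projection argument, and the final split around $f_0$ handled by Lemma~\ref{lemma:uniform_convergence} with the same Lipschitz estimate. No substantive differences to report.
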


We state the result of the lemma below w.r.t.\ the Whittle measure, in line with the proof of Theorem~\ref{eq:bvmLinearModel}. By contiguity, the assertion also holds w.r.t.\ $P_0^n$-probability.

\begin{Lemma}\label{lemma:marginalParametric}
Under the assumptions of Theorem~\ref{eq:bvmLinearModel}, it holds
\[
  P_{W;\tau_0,\tau_1}^n\left( \{\vect\theta\in\mathbb R^r\colon\|\matr X_n(\vect\theta-\vect\theta_0)\|\geq M_n\}\times\mathcal{H}_n|\vect Z_n \right) \to 0
\]
in~$P_{W;0}^n$-probability as~$n\to\infty$, where $\mathcal H_n$ is defined in \eqref{eq:ThetaNHNDefH} and $M_n$ as in Lemma~\ref{lemma:marginalParametricFlat}.
\end{Lemma}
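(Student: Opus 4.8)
The plan is to reduce everything to the flat-prior statement already established in Lemma~\ref{lemma:marginalParametricFlat}, exploiting that for fixed $f$ the conditional posterior of $\vect\theta$ under the proper prior $P(d\vect\theta)$ is just the Gaussian conditional posterior of the flat-prior case reweighted by the prior density $g$. Writing $A_n:=\{\vect\theta\in\mathbb R^r\colon\|\matr X_n(\vect\theta-\vect\theta_0)\|\geq M_n\}$ and disintegrating the joint posterior over $f$, one has
\[
P_{W;\tau_0,\tau_1}^n(A_n\times\mathcal H_n|\vect Z_n)
=\int_{\mathcal H_n}P_{W;\tau_0,\tau_1}^n(A_n|f,\vect Z_n)\,P_{W;\tau_0,\tau_1}^n(df|\vect Z_n)
\leq\sup_{f\in\mathcal H_n}P_{W;\tau_0,\tau_1}^n(A_n|f,\vect Z_n),
\]
so it suffices, exactly as in the reduction via \cite{bickel2012semiparametric} used in Lemma~\ref{lemma:marginalParametricFlat}, to show that the conditional probability on the right tends to $0$ in $P_{W;0}^n$-probability.

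For fixed $f$ the conditional posterior has Lebesgue density proportional to $\phi_{\vecth\theta_n,\matr\Sigma_n}(\vect\theta)\,g(\vect\theta)$, with $\vecth\theta_n$ and $\matr\Sigma_n$ as in \eqref{eq:hatthetan} and $\phi_{\vecth\theta_n,\matr\Sigma_n}$ the $N_r(\vecth\theta_n,\matr\Sigma_n)$ density from the flat-prior case. I would write $P_{W;\tau_0,\tau_1}^n(A_n|f,\vect Z_n)$ as the ratio of $\int_{A_n}\phi_{\vecth\theta_n,\matr\Sigma_n}\,g$ over $\int_{\mathbb R^r}\phi_{\vecth\theta_n,\matr\Sigma_n}\,g$. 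For the numerator, the bound $g\leq C$ from Assumption~\ref{ass:thetaPrior} gives $\int_{A_n}\phi_{\vecth\theta_n,\matr\Sigma_n}\,g\leq C\,\widehat{P}_{W;\tau_0,\tau_1}^n(A_n|f,\vect Z_n)$, and the right-hand side tends to $0$ uniformly over $f\in\mathcal H_n$ by Lemma~\ref{lemma:marginalParametricFlat}.

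The crux is a uniform lower bound on the denominator. The plan is to pick $\delta>0$ so small that $g\geq c_0>0$ on $B:=\{\|\vect\theta-\vect\theta_0\|\leq\delta\}$ (possible by the continuity and strict positivity of $g$ near $\vect\theta_0$ in Assumption~\ref{ass:thetaPrior}), to bound the denominator below by $c_0\,N_r(\vecth\theta_n,\matr\Sigma_n)(B)$, and then to show that this Gaussian mass tends to $1$ uniformly over $f\in\mathcal H_n$. This in turn needs $\matr\Sigma_n\to0$ and $\vecth\theta_n\to\vect\theta_0$, uniformly in $f\in\mathcal H_n$: the former is immediate from \eqref{eq:loewner_bound_on_sigma_n}, since $\lambda_{\max}(\matr\Sigma_n)\leq\tau_1\lambda_{\min}^{-1}(\matr X_n^T\matr X_n)\to0$ by Assumption~\ref{ass:designMatrix}; for the latter I would use $\|\vecth\theta_n-\vect\theta_0\|^2\leq\lambda_{\min}^{-1}(\matr X_n^T\matr X_n)\|\matr X_n(\vecth\theta_n-\vect\theta_0)\|^2$ together with the fact that repeating the estimates of Lemma~\ref{lemma:marginalParametricFlat} without the factor $M_n^{-1}$ yields $\sup_{f\in\mathcal H_n}\|\matr X_n(\vecth\theta_n-\vect\theta_0)\|^2$ bounded in $P_{W;0}^n$-probability (via the expectation bound for the $f_0$-term and the uniform-convergence Lemma~\ref{lemma:uniform_convergence} for the increment over $\mathcal H_n$, on which $\|f-f_0\|_\infty\leq1/\log n\to0$). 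Since $\lambda_{\min}^{-1}(\matr X_n^T\matr X_n)\to0$, this gives $\sup_{f\in\mathcal H_n}\|\vecth\theta_n-\vect\theta_0\|\to0$ in probability, hence $\inf_{f\in\mathcal H_n}N_r(\vecth\theta_n,\matr\Sigma_n)(B)\to1$ and the denominator is at least $c_0/2$ on an event of probability tending to $1$.

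The main obstacle is precisely this uniform-in-$f$ control of the random centre $\vecth\theta_n(f)$ and the resulting lower bound on the normalising constant; everything else is a routine numerator/denominator estimate. Combining the $o(1)$ numerator with the $\geq c_0/2$ denominator gives $\sup_{f\in\mathcal H_n}P_{W;\tau_0,\tau_1}^n(A_n|f,\vect Z_n)\to0$ in $P_{W;0}^n$-probability, and the displayed disintegration bound then yields the claim; the corresponding statement under $P_0^n$ follows by contiguity of Whittle's and the true likelihood.
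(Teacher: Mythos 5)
Your proposal is correct and follows essentially the same route as the paper: the paper also bounds the numerator by $C$ times the flat-prior posterior probability (invoking Lemma~\ref{lemma:marginalParametricFlat}) and lower-bounds the normalising integral by $c_0$ times the mass that $N_r(\vecth\theta_n,\matr\Sigma_n)$ assigns to a small ball around $\vect\theta_0$, using $\|\matr\Sigma_n^{1/2}\|^2\leq\tau_1/\lambda_{\min}(\matr X_n^T\matr X_n)\to0$ and the concentration of $\vecth\theta_n$ at $\vect\theta_0$. The only cosmetic difference is that the paper phrases this as boundedness of the joint evidence ratio $\hat\rho_n/\rho_n$ rather than conditioning on $f$ and taking a supremum over $\mathcal H_n$; your uniform-in-$f$ treatment of the random centre $\vecth\theta_n(f)$ via Lemma~\ref{lemma:uniform_convergence} is, if anything, slightly more explicit on that point.
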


\begin{proof}
For the sake of enhanced proof readability, we drop the subscripts~$\tau_0,\tau_1$ in the notation
and write~$P_W^n$ for the posterior and~$P(df)$ for the marginal prior.
Let $\Gamma_n = \{\vect\theta\in\mathbb R^r\colon\|\matr X_n(\vect\theta-\vect\theta_0)\|\geq M_n\}$, $\rho_n:=\int_{\Gamma_n\times\mathcal{H}_n} p_W^n(\vect Z_n|\vect\theta,f)P(d\vect\theta)P(df)$ and $\hat\rho_n=\int_{\Gamma_n\times\mathcal{H}_n}p_W^n(\vect Z_n|\vect\theta,f)d\vect\theta P(df)$.
By Assumption~\ref{ass:thetaPrior}
it holds 
\begin{align*}
  &P_W^n((\vect\theta ,f)\in\Gamma_n\times\mathcal H_n|\vect Z_n) 
  = \frac{1}{\rho_n}\int_{\Gamma_n\times\mathcal{H}_n}p_W^n(\vect Z_n|\vect\theta,f)P(d\vect\theta)P(df)\\
  &
  \leq \frac{C\hat\rho_n}{\rho_n}\,\widehat{P}_{W;\tau_0,\tau_1}^n((\vect\theta ,f)\in\Gamma_n\times\mathcal H_n|\vect Z_n)
\end{align*}
By Lemma~\ref{lemma:marginalParametricFlat} it holds ~$\widehat{P}_{W;\tau_0,\tau_1}^n((\vect\theta ,f)\in\Gamma_n\times\mathcal H_n|\vect Z_n)\to 0$
in~$P_{W;0}^n$-probability as~$n\to\infty$, such that it is sufficient to show that the evidence
ratio~$\hat\rho_n/\rho_n$ is bounded in~$P_{W;0}^n$-probability.

By some straightforward calculations (see e.g.\ the steps of~(5) in Section~9.2.1 in~\cite{christensen2011bayesian})  we obtain the following representation for the quadratic expression in 
the exponent of Whittle's likelihood~$p_W^n$ from~\eqref{eq:whittleLinearModel}:
\begin{align*}
  &\left( \vectt Z_n-\matrt X_n\vect\theta \right)^T\matr D_n^{-1}\left( \vectt Z_n-\matrt X_n\vect\theta \right)\\
  &= \left( \vectt Z_n-\matrt X_n\vecth\theta_n \right)^T\matr D_n^{-1}\left( \vectt Z_n-\matrt X_n\vecth\theta_n \right) + \left( \vect\theta-\vecth\theta_n \right)^T \matr\Sigma_n^{-1} \left( \vect\theta-\vecth\theta_n \right)
\end{align*}
with~$\vecth\theta_n$ and~$\matr\Sigma_n$ from~\eqref{eq:hatthetan}.
Observing
\[
  \int\exp\left( -\frac{1}{2}\left( \vect\theta-\vecth\theta_n \right)^T \matr\Sigma_n^{-1} \left( \vect\theta-\vecth\theta_n \right) \right)d\vect\theta = \sqrt{(2\pi)^r|\matr\Sigma_n|},
\]
this yields, by Tonelli's theorem,
\begin{align*}  \hat\rho_n 
  &=\int\int p_W^n(\vect Z_n|\vect\theta,f)d\vect\theta P(df)\notag\\
  &=  \left( \frac{|\matr\Sigma_n|}{|\matr D_n|} \right)^{1/2}\int\exp\left( -\frac{1}{2}\left( \vectt Z_n-\matrt X_n\vecth\theta_n \right)^T\matr D_n^{-1}\left( \vectt Z_n-\matrt X_n\vecth\theta_n \right) \right)P(df)\notag
\end{align*}
Similarly, 
for~$\rho_n$, with $P( d\vect\theta)=g(\vect\theta) d\vect\theta$ by Assumption~\ref{ass:thetaPrior},
\begin{align*}
	\rho_n&=\hat\rho_n\, \frac{1}{\sqrt{2\pi |\matr\Sigma_n|}}\, \int\exp\left( -\frac{1}{2}\left( \vect\theta-\vecth\theta_n \right)^T \matr\Sigma_n^{-1} \left( \vect\theta-\vecth\theta_n \right) \right)g(\vect\theta) d\vect\theta\\
	&=\hat\rho_n\,\E_{\vect\xi}\left( g(\vecth\theta_n+\matr\Sigma_n^{1/2}\vect\xi )\right),
\end{align*}
where $\vect\xi\sim N_r(\vect 0,\matr I_r)$ and $\E_{\vect\xi}$ denotes the expectation with respect to $P_{\vect\xi}$.
By Assumption~\ref{ass:thetaPrior}, there exists~$\varepsilon>0$ such that
it holds $g(\vect\theta)\geq c\, \indi_{\{ \|\vect\theta-\vect\theta_0\|<\varepsilon\}}$ for some $c>0$. Consequently,
\begin{align*}
	&\E_{\vect\xi}\left( g(\vecth\theta_n+\matr\Sigma_n^{1/2}\vect\xi )\right)
	\geq c\,  P_{\vect\xi}\left(\| \matr\Sigma_n^{1/2}\vect\xi+ \vecth\theta_n-\vect\theta_0\|<\varepsilon \right).\end{align*}
Furthermore, with $ \matr\Sigma_{n,0}= (\matrt X_n^T\matr D_{n,0}^{-1}\matrt X_n)^{-1}$, by applications of Lemma~\ref{lemma:cfLike}
\begin{align*}
	&\| \matr\Sigma_n^{1/2}\|^2=\lambda_{\max}\left( (\matrt X_n^T\matr D_{n}^{-1}\matrt X_n)^{-1} \right)=\left(\lambda_{\min}\left( \matrt X_n^T\matr D_{n}^{-1}\matrt X_n \right)  \right)^{-1}\leq \frac{\tau_1}{\lambda_{\min}(\matr X_n^T\matr X_n)},
\end{align*}
such that
\begin{align*}
	&\| \matr\Sigma_n^{1/2}\vect\xi+ \vecth\theta_n-\vect\theta_0\|\leq \| \matr\Sigma_n^{1/2}\|\,\| \vect\xi\|+ \| \matr\Sigma_{n,0}^{1/2}\| \,\| \matr\Sigma_{n,0}^{-1/2}(\vecth\theta_n-\vect\theta_0)\|\\
	&\le \sqrt{\frac{\tau_1}{\lambda_{\min}(\matr X_n^T\matr X_n)}}\,\| \vect\xi\|
+	\sqrt{\frac{\tau_1}{\lambda_{\min}(\matr X_n^T\matr X_n)}}\,\| \matr\Sigma_{n,0}^{-1/2}(\vecth\theta_n-\vect\theta_0)\|.
\end{align*}
Under $P_{W;0}^n$ it holds
$\matr \Sigma_{n,0}^{-1/2}(\vecth\theta_n-\vect\theta_0)\sim N(\vect 0,\matr I_r)$ and $\lambda_{\min}(\matr X_n^T\matr X_n)\to\infty$ by Assumptions~\ref{ass:designMatrix}, such that
\begin{align*}
	&	\sqrt{\frac{\tau_1}{\lambda_{\min}(\matr X_n^T\matr X_n)}}\,\| \matr\Sigma_{n,0}^{-1/2}(\vecth\theta_n-\vect\theta_0)\|=o_{P_{W;0}^n}(1).
\end{align*}
Consequently,
\begin{align*}
	&P_{\vect\xi}\left(\| \matr\Sigma_n^{1/2}\vect\xi+ \vecth\theta_n-\vect\theta_0\|<\varepsilon \right)\ge  P_{\vect\xi}\left( \|\xi\|<	\sqrt{\frac{\lambda_{\min}(\matr X_n^T\matr X_n)}{\tau_1}} \left( \varepsilon+ o_{P_{W;0}^n}(1)\right) \right)\\
	&=1+o_{P_{W;0}^n}(1),
\end{align*}
where the last statement follows e.g.\ from an application of the Markov inequality and $\lambda_{\min}(\matr X_n^T\matr X_n)\to\infty$.
Putting everything together, we get for the evidence ratio
\begin{align*}
	&\frac{\hat\rho_n}{\rho_n}\le \frac{1}{c\, (1+o_{P_{W;0}^n}(1))}=O_{P_{W;0}^n}(1),
\end{align*}
completing the proof.
\end{proof}
\subsection{Joint conditional nuisance posterior concentration}

We can now put the previous results together to prove that the joint conditional nuisance posterior concentration as in Theorem~\ref{th:bvmMultiGeneral} (c) holds in this setting, where the statement also holds in $P_0^n$-probability due to contiguity.

\begin{Theorem}\label{th:jointContr}
Under the assumptions of Theorem~\ref{eq:bvmLinearModel}, it holds
\[
  P_{W;\tau_0,\tau_1}^n((\vect\theta ,f)\in\Theta_n\times\mathcal H_n|\vect Z_n)\to 1
\]
in~$P_{W;0}^n$-probability as~$n\to \infty$ for~$\Theta_n=
	\left\{ \vect\theta\in\Theta\colon(\vect\theta-\vect\theta_0)^T\matr S_n(\vect\theta-\vect\theta_0)\leq \sqrt{\lambda_{\min}(\matr S_n)}	\right\}$, $\matr S_n=\matrt{X}_n^T\matr D_{n,0}^{-1}\matrt{X}_n$ and $\mathcal H_n$ 
as in~\eqref{eq:ThetaNHNDefH} .
\begin{proof}
From Theorem~\ref{lemma:jointHellinger}, we know that~the joint posterior with respect to the root average squared
Hellinger distance~$d_{n,H}$ (as defined in~\eqref{eq:hellingerDefSemi}) contracts polynomially.
Thus, it follows from Corollary~\ref{lemma:hellingerFromBelowBvm} in the Appendix that the
Hellinger ball around~$(\vect\theta_0,f_0)$ of radius~$n^{-s}$
is a subset of~$\mathbb R^r\times\mathcal H_n$ for~$n$ large enough, yielding
\[
  P_{W;\tau_0,\tau_1}^n((\vect\theta ,f)\in\mathbb R^r\times\mathcal H_n|\vect Z_n)\to 1
\]
in~$P_{W;0}^n$-probability.
First, it holds
\begin{align*}
	&(\vect\theta-\vect\theta_0)^T\matr S_n(\vect\theta-\vect\theta_0)\leq \frac{1}{\tau_0}\, (\vect\theta-\vect\theta_0)^T\matr X_n^T\matr X_n(\vect\theta-\vect\theta_0),
\end{align*}
such that 
\begin{align*}
	&P_{W;\tau_0,\tau_1}^n(\Theta_n\times\mathcal{H}_n|\vect Z_n)
	\geq
	P_{W;\tau_0,\tau_1}^n(\{\|\matr X_n  (\vect\theta-\vect\theta_0)\|^2< \tau_0 \lambda_{\min}^{1/2}(\matr S_n)\}\times\mathcal{H}_n|\vect Z_n) \\
	&=P_{W;\tau_0,\tau_1}^n(\mathbb R^r\times\mathcal H_n|\vect Z_n) - P_{W;\tau_0,\tau_1}^n(\{\|\matr X_n  (\vect\theta-\vect\theta_0)\|\geq \sqrt{\tau_0}\lambda_{\min}^{1/4}(\matr S_n)\}\times\mathcal{H}_n|\vect Z_n)\\
	&\to 1
\end{align*}
in~$P_{W;0}^n$-probability by \eqref{eq_lambda_mintoinfty} and Lemma~\ref{lemma:marginalParametric}. 
\end{proof}
\end{Theorem}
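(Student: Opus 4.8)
The plan is to prove concentration on the product set $\Theta_n\times\mathcal H_n$ by establishing concentration on each factor separately and combining them through a complement (union-bound) argument. The two factors are controlled by the two principal results proved earlier: the joint Hellinger contraction of Theorem~\ref{lemma:jointHellinger} will handle the nuisance factor $\mathcal H_n$, while the parametric marginal contraction of Lemma~\ref{lemma:marginalParametric} will handle the $\vect\theta$-factor $\Theta_n$.

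First I would dispose of the nuisance part, i.e.\ show $P_{W;\tau_0,\tau_1}^n(\mathbb R^r\times\mathcal H_n|\vect Z_n)\to1$. Theorem~\ref{lemma:jointHellinger} states that the joint posterior puts asymptotically full mass on a $d_{n,H}$-ball of radius $n^{-s}$ around $(\vect\theta_0,f_0)$. Since $\mathcal H_n$ from~\eqref{eq:ThetaNHNDefH} is a sup-norm neighbourhood of $f_0$ of radius $1/\log(n)$, the crucial step is to convert the small average-Hellinger distance into a small sup-norm distance for $f$. This is precisely what Corollary~\ref{lemma:hellingerFromBelowBvm} in the appendix provides, lower-bounding $d_{n,H}$ by a multiple of $\|f-f_0\|_\infty$ up to lower-order terms. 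Because the Hellinger radius $n^{-s}$ decays polynomially whereas the sup-norm radius $1/\log(n)$ decays only logarithmically, the polynomial Hellinger ball is eventually contained in $\mathbb R^r\times\mathcal H_n$, so the posterior mass on $\mathbb R^r\times\mathcal H_n$ tends to one.

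Next I would handle the $\vect\theta$-factor. The obstacle here is that $\Theta_n$ is defined through the quadratic form $\matr S_n$, whereas Lemma~\ref{lemma:marginalParametric} controls the Euclidean norm $\|\matr X_n(\vect\theta-\vect\theta_0)\|$. I would bridge this using the boundedness of the spectral density from below: since $f_0\ge\tau_0$ and $\matr F_n$ is orthogonal, so that $\matrt X_n^T\matrt X_n=\matr X_n^T\matr X_n$, one has the Loewner bound $\matr S_n\preceq\tau_0^{-1}\matr X_n^T\matr X_n$, giving $(\vect\theta-\vect\theta_0)^T\matr S_n(\vect\theta-\vect\theta_0)\le\tau_0^{-1}\|\matr X_n(\vect\theta-\vect\theta_0)\|^2$. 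Consequently the ball $\{\|\matr X_n(\vect\theta-\vect\theta_0)\|^2<\tau_0\,\lambda_{\min}^{1/2}(\matr S_n)\}$ is contained in $\Theta_n$. Setting $M_n:=\sqrt{\tau_0}\,\lambda_{\min}^{1/4}(\matr S_n)$, which diverges by~\eqref{eq_lambda_mintoinfty}, the complement of this ball has vanishing posterior mass on $\mathcal H_n$ by Lemma~\ref{lemma:marginalParametric}.

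Finally I would assemble the two pieces via
\begin{align*}
	P_{W;\tau_0,\tau_1}^n(\Theta_n\times\mathcal H_n|\vect Z_n)
	&\ge P_{W;\tau_0,\tau_1}^n(\mathbb R^r\times\mathcal H_n|\vect Z_n)\\
	&\quad- P_{W;\tau_0,\tau_1}^n(\{\|\matr X_n(\vect\theta-\vect\theta_0)\|\ge M_n\}\times\mathcal H_n|\vect Z_n),
\end{align*}
where the first term tends to one by the nuisance step and the second to zero by the parametric step, yielding the claim in $P_{W;0}^n$-probability; contiguity of Whittle's and the true likelihood then transfers it to $P_0^n$-probability. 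I expect the main technical obstacle to be the first step, namely translating a Hellinger neighbourhood into a sup-norm neighbourhood of the spectral density (through Corollary~\ref{lemma:hellingerFromBelowBvm}) and confirming that the polynomial contraction rate $n^{-s}$ comfortably dominates the logarithmic radius $1/\log(n)$ defining $\mathcal H_n$; the $\vect\theta$-step is then a routine eigenvalue comparison feeding into the already-established parametric contraction.
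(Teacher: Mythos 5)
Your proposal is correct and follows essentially the same route as the paper's proof: Theorem~\ref{lemma:jointHellinger} combined with the lower bound of Corollary~\ref{lemma:hellingerFromBelowBvm} to place the posterior mass on $\mathbb R^r\times\mathcal H_n$, the Loewner comparison $\matr S_n\preceq\tau_0^{-1}\matr X_n^T\matr X_n$ to embed a Euclidean ball into $\Theta_n$, and Lemma~\ref{lemma:marginalParametric} with $M_n=\sqrt{\tau_0}\,\lambda_{\min}^{1/4}(\matr S_n)$ feeding into the same complement bound. The only cosmetic imprecision is that the corollary lower-bounds $d_{n,H}^2$ by $\|f-f_0\|_\infty^{3}$ (plus an $O(1/n)$ term) rather than by $\|f-f_0\|_\infty$ itself, but the polynomial-versus-logarithmic comparison you invoke still goes through unchanged.
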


\section{Discussion}
In this paper, 
a semiparametric Bernstein-von-Mises theorem for the parameter vector of a linear model with nonparametrically modelled time series errors has been derived where the pseudo-posterior distribution is based on the Whittle likelihood approximation.

The theorem shows that the pseudo-posterior is asymptotically Gaussian with mean  equal to the pseudo-maximum likelihood estimator calculated under the Whittle likelihood and covariance matrix equal to the asymptotic covariance matrix of the pseudo-maximum likelihood estimator also calculated under the Whittle likelihood. 

The theorem is crucial as Bayesian uncertainty quantification e.g.\ by means of credibility sets or posterior standard deviations will only report the true frequentist uncertainty  if  this  covariance matrix asymptotically coincides in a suitable sense with the corresponding quantity under the exact Gaussian likelihood. 

We discuss conditions under which this asymptotic equivalence holds and illustrate this in  examples. However, we also provide a counter-example where these conditions are not met  and where posterior inference based on the asymptotic Gaussian approximation under the Whittle likelihood  results in a systematic over- or underestimation of the statistical uncertainty.

An important avenue for future research will be to obtain a better understanding of what assumptions are required to have not only first (in the sense of posterior consistency) but also second-order correctness (in the sense that the asymptotic covariance associated with a Bayesian point estimator coincides with the one present in the posterior).

\section*{Acknowledgements}
The work was supported by DFG grants KI 1443/3-1 and KI 1443/3-2.
R.M.\ also acknowledges support by the Marsden grant MFP-UOA2131 from New Zealand Government funding, administered by the Royal Society Te Ap\=arangi. We thank the New Zealand eScience Infrastructure
(NeSI) \url{https://www.nesi.org.nz} for the use of their high performance computing facilities and the Centre for eResearch at the University of Auckland for their technical
support.

\setlength{\bibsep}{0.0pt}
\bibliographystyle{abbrvnat}

\bibliography{draft}

\begin{thebibliography}{34}
\providecommand{\natexlab}[1]{#1}
\providecommand{\url}[1]{\texttt{#1}}
\expandafter\ifx\csname urlstyle\endcsname\relax
  \providecommand{\doi}[1]{doi: #1}\else
  \providecommand{\doi}{doi: \begingroup \urlstyle{rm}\Url}\fi

\bibitem[Abadir and Magnus(2005)]{abadir_magnus05}
K.~M. Abadir and J.~R. Magnus.
\newblock \emph{Matrix Algebra}.
\newblock Cambridge University Press, 2005.

\bibitem[Amemiya(1973)]{amemiya1973}
T.~Amemiya.
\newblock Generalized least squares with an estimated autocovariance matrix.
\newblock \emph{Econometrica}, 41\penalty0 (4):\penalty0 723--732, 1973.

\bibitem[Bernstein(2018)]{Bernstein20181}
D.~S. Bernstein.
\newblock \emph{Scalar, Vector, and Matrix Mathematics: Theory, Facts, and
  Formulas - Revised and Expanded Edition}.
\newblock Princeton University Press, 2018.

\bibitem[Bickel and Kleijn(2012)]{bickel2012semiparametric}
P.~Bickel and B.~Kleijn.
\newblock The semiparametric {B}ernstein--von {M}ises theorem.
\newblock \emph{The Annals of Statistics}, 40\penalty0 (1):\penalty0 206--237,
  2012.

\bibitem[Brockwell and Davis(1991)]{brockwell1991}
P.~J. Brockwell and R.~A. Davis.
\newblock \emph{Time series: Theory and methods}.
\newblock Springer, New York, 2nd edition, 1991.

\bibitem[Cadonna et~al.(2017)Cadonna, Kottas, and Prado]{cadonna2017}
A.~Cadonna, A.~Kottas, and R.~Prado.
\newblock Bayesian mixture modeling for spectral density estimation.
\newblock \emph{Stat. Probab. Lett.}, 125:\penalty0 189--195, 2017.
\newblock DOI: \url{10.1016/j.spl.2017.02.008}.

\bibitem[Carter and Kohn(1997)]{CarterKohn97}
C.~Carter and R.~Kohn.
\newblock Semiparametric {B}ayesian inference for time series with mixed
  spectra.
\newblock \emph{Journal of the Royal Statistical Society: Series B (Statistical
  Methodology)}, 59\penalty0 (1):\penalty0 255--268, 1997.

\bibitem[Chopin et~al.(2013)Chopin, Rousseau, and Liseo]{Chopin13}
N.~Chopin, J.~Rousseau, and B.~Liseo.
\newblock Computational aspects of {B}ayesian spectral density estimation.
\newblock \emph{Journal of Computational and Graphical Statistics}, 22\penalty0
  (3):\penalty0 533--557, 2013.

\bibitem[Choudhuri et~al.(2004{\natexlab{a}})Choudhuri, Ghosal, and
  Roy]{choudhuri}
N.~Choudhuri, S.~Ghosal, and A.~Roy.
\newblock Bayesian estimation of the spectral density of a time series.
\newblock \emph{Journal of the American Statistical Association}, 99\penalty0
  (468):\penalty0 1050--1059, 2004{\natexlab{a}}.

\bibitem[Choudhuri et~al.(2004{\natexlab{b}})Choudhuri, Ghosal, and
  Roy]{choudhuriContiguity}
N.~Choudhuri, S.~Ghosal, and A.~Roy.
\newblock Contiguity of the {W}hittle measure for a {G}aussian time series.
\newblock \emph{Biometrika}, 91\penalty0 (1):\penalty0 211--218,
  2004{\natexlab{b}}.

\bibitem[Christensen et~al.(2011)Christensen, Johnson, Branscum, and
  Hanson]{christensen2011bayesian}
R.~Christensen, W.~Johnson, A.~Branscum, and T.~E. Hanson.
\newblock \emph{Bayesian ideas and data analysis: an introduction for
  scientists and statisticians}.
\newblock CRC Press, 2011.

\bibitem[Devroye et~al.(2022)Devroye, Mehrabian, and Reddad]{frob}
L.~Devroye, A.~Mehrabian, and T.~Reddad.
\newblock The total variation distance between high-dimensional {G}aussians
  with the same mean.
\newblock \emph{arXiv preprint}, 2022.
\newblock arXiv: \url{1810.08693} [math.ST].

\bibitem[Dey et~al.(2018)Dey, Kim, and Lim]{DeyTanujit2018Btsr}
T.~Dey, K.~H. Kim, and C.~Y. Lim.
\newblock Bayesian time series regression with nonparametric modeling of
  autocorrelation.
\newblock \emph{Computational statistics}, 33\penalty0 (4):\penalty0
  1715--1731, 2018.
\newblock ISSN 0943-4062.

\bibitem[Edwards et~al.(2019)Edwards, Meyer, and Christensen]{edwards2019}
M.~C. Edwards, R.~Meyer, and N.~Christensen.
\newblock Bayesian nonparametric spectral density estimation using {B}-spline
  priors.
\newblock \emph{Stat. Comput.}, 29(1):\penalty0 67--78, 2019.
\newblock DOI: \url{10.1007/s11222-017-9796-9}.

\bibitem[Galbraith and Galbraith(1974)]{Galbraith_and_Galbraith1974}
R.~F. Galbraith and J.~I. Galbraith.
\newblock On the inverses of some patterned matrices arising in the theory of
  stationary time series.
\newblock \emph{Journal of Applied Probability}, 11\penalty0 (1):\penalty0
  63--71, 1974.

\bibitem[Ghosal and Van Der~Vaart(2007)]{ghosal2007convergence}
S.~Ghosal and A.~Van Der~Vaart.
\newblock Convergence rates of posterior distributions for noniid observations.
\newblock \emph{The Annals of Statistics}, 35\penalty0 (1):\penalty0 192--223,
  2007.

\bibitem[Ghosal and van~der Vaart(2017)]{ghosal2017fundamentals}
S.~Ghosal and A.~van~der Vaart.
\newblock \emph{Fundamentals of nonparametric Bayesian inference}, volume~44.
\newblock Cambridge University Press, 2017.

\bibitem[Hájek(1968)]{hajek}
J.~Hájek.
\newblock Asymptotic normality of simple linear rank statistics under
  alternatives.
\newblock \emph{The Annals of Mathematical Statistics}, 39\penalty0
  (2):\penalty0 325--346, 1968.

\bibitem[Jun et~al.(2022)Jun, Lim, and Kim]{JunYoonBae2022Beot}
Y.~B. Jun, C.~Y. Lim, and K.~H. Kim.
\newblock Bayesian estimation of the autocovariance of a model error in time
  series.
\newblock \emph{arXiv preprint}, 2022.
\newblock arXiv: \url{2210.07457} [stat.ME].

\bibitem[Kirch(2006)]{KirchDiss}
C.~Kirch.
\newblock \emph{Resampling methods for the change analysis of dependent data}.
\newblock PhD thesis, Universit{\"a}t zu K{\"o}ln, 2006.

\bibitem[Kirch et~al.(2019)Kirch, Edwards, Meier, and Meyer]{kirch2019}
C.~Kirch, M.~C. Edwards, A.~Meier, and R.~Meyer.
\newblock Beyond {W}hittle: nonparametric correction of a parametric likelihood
  with a focus on {B}ayesian time series analysis.
\newblock \emph{Bayesian Anal.}, 14\penalty0 (4):\penalty0 1037--1073, 2019.
\newblock DOI: \url{10.1214/18-BA1126}.

\bibitem[Kleijn and Van~der Vaart(2012)]{kleijn2012bernstein}
B.~Kleijn and A.~Van~der Vaart.
\newblock The {B}ernstein-von-{M}ises theorem under misspecification.
\newblock \emph{Electronic Journal of Statistics}, 6:\penalty0 354--381, 2012.

\bibitem[Lorentz(2012)]{lorentz2012bernstein}
G.~G. Lorentz.
\newblock \emph{Bernstein polynomials}.
\newblock AMS Chelsea Publishing, 2012.

\bibitem[Maturana-Russel and Meyer(2021)]{russel2021}
P.~Maturana-Russel and R.~Meyer.
\newblock Bayesian spectral density estimation using {P}-splines with
  quantile-based knot placement.
\newblock \emph{Comput. Stat.}, 36(3):\penalty0 2055--2077, 2021.
\newblock DOI: \url{10.1007/s00180-021-01066-7}.

\bibitem[Meier(2018)]{meier}
A.~Meier.
\newblock \emph{A Matrix Gamma Process and Applications to Bayesian Analysis of
  Multivariate Time Series}.
\newblock PhD thesis, Otto-von-Guericke University Magdeburg, 2018.

\bibitem[Meier et~al.(2020)Meier, Kirch, and Meyer]{meierPaper}
A.~Meier, C.~Kirch, and R.~Meyer.
\newblock Bayesian nonparametric analysis of multivariate time series: a matrix
  gamma process approach.
\newblock \emph{Journal of Multivariate Analysis}, 175:\penalty0 104560, 2020.

\bibitem[Merkatas and Särkkä(2022)]{merkatas2022identification}
C.~Merkatas and S.~Särkkä.
\newblock System identification using bayesian neural networks with
  nonparametric noise models.
\newblock \emph{arXiv preprint}, 2022.
\newblock arXiv: \url{2104.12119} [stat.ME].

\bibitem[Pardo(2005)]{pardo05}
L.~Pardo.
\newblock \emph{Statistical Inference Based on Divergence Measures}.
\newblock Chapman and Hall/CRC, 2005.

\bibitem[Rao and Yang(2021)]{rao2021}
S.~S. Rao and J.~Yang.
\newblock Reconciling the {G}aussian and {W}hittle likelihood with an
  application to estimation in the frequency domain.
\newblock \emph{Ann. Stat.}, 49(5):\penalty0 2774--2802, 2021.
\newblock DOI: \url{10.1214/21-AOS2055}.

\bibitem[Rousseau et~al.(2012)Rousseau, Chopin, and Liso]{rousseau2012}
J.~Rousseau, N.~Chopin, and B.~Liso.
\newblock Bayesian nonparametric estimation of the spectral density of a long
  or intermediate memory {G}aussian process.
\newblock \emph{Ann. Stat.}, 40(2):\penalty0 964--995, 2012.
\newblock DOI: \url{10.1214/11-AOS955}.

\bibitem[Serov(2017)]{Serov}
V.~Serov.
\newblock \emph{Fourier series, Fourier transform and their applications to
  mathematical physics}.
\newblock Applied mathematical sciences ; v. 197. Springer, Cham, 2017.
\newblock ISBN 9783319652627.

\bibitem[Tang et~al.(2023)Tang, Kirch, Lee, and Meyer]{TangYifu2023Pcft}
Y.~Tang, C.~Kirch, J.~E. Lee, and R.~Meyer.
\newblock Posterior consistency for the spectral density of non-gaussian
  stationary time series.
\newblock \emph{Scandinavian journal of statistics}, 50\penalty0 (3):\penalty0
  1152--1182, 2023.
\newblock ISSN 0303-6898.

\bibitem[van~der Vaart and Wellner(1996)]{van_der_vaart_and_wellner1996}
A.~van~der Vaart and J.~Wellner.
\newblock \emph{Weak Convergence and Empirical Processes: with Applications to
  Statistics}.
\newblock Springer, 1996.

\bibitem[Zhang(2011)]{zhang11_matrix_book}
F.~Zhang.
\newblock \emph{Matrix Theory: Basic Results and Techniques}.
\newblock Springer, 2nd edition, 2011.

\end{thebibliography}
\newpage
\appendix
\section*{Appendix}
\section{Technical lemmas}\label{sec:technical}

\begin{Lemma}\label{lemma:cfLike}
Let~$\matr A\in\mathbb R^{n\times r}$ and~$\matr B,C\in\mathcal S_n^+$.
Then it holds
\begin{align*}
 &(a)\quad \lambda_{\min}(\matr A^T\matr B\matr A) \geq \lambda_{\min}(\matr A^T\matr A)\lambda_{\min}(\matr B), \\
&(b)\quad  \lambda_{\max}(\matr A^T\matr B\matr A) \leq \lambda_{\max}(\matr A^T\matr A)\lambda_{\max}(\matr B),\\
&(c)\quad \operatorname{tr}(BC)\le \lambda_{\max}(B)\,\operatorname{tr}(C).
\end{align*}
\begin{proof}
	The result for (a) and (b) readily follows from the Min-Max principle of Courant-Fisher, (c) is Lemma B.4(d) in \cite{meier}.
\end{proof}
\end{Lemma}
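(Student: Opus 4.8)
The plan is to obtain (a) and (b) directly from the Courant--Fischer variational characterization of the extreme eigenvalues of a symmetric matrix, and to prove (c) via the spectral decomposition of $C$. All three assertions are elementary once one exploits positive definiteness of $\matr B$ and $C$.

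For (b) I would begin from $\lambda_{\max}(\matr A^T\matr B\matr A)=\max_{\|\vect v\|=1}\vect v^T\matr A^T\matr B\matr A\vect v$. Writing $\vect w=\matr A\vect v$ and using that $\matr B\in\mathcal S_n^+$ gives the pointwise bound $\vect w^T\matr B\vect w\le\lambda_{\max}(\matr B)\|\vect w\|^2$, so the integrand is dominated by $\lambda_{\max}(\matr B)\,\vect v^T\matr A^T\matr A\vect v\le\lambda_{\max}(\matr B)\,\lambda_{\max}(\matr A^T\matr A)$; taking the maximum over the unit sphere yields the claim. For (a) the same substitution gives $\vect v^T\matr A^T\matr B\matr A\vect v=\vect w^T\matr B\vect w\ge\lambda_{\min}(\matr B)\|\vect w\|^2=\lambda_{\min}(\matr B)\,\vect v^T\matr A^T\matr A\vect v$, and since $\lambda_{\min}(\matr B)>0$ this is bounded below by $\lambda_{\min}(\matr B)\,\lambda_{\min}(\matr A^T\matr A)$; minimizing over the unit sphere establishes (a).

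For (c) I would diagonalize $C=\sum_{i=1}^n\mu_i\vect u_i\vect u_i^T$ with orthonormal eigenvectors $\vect u_i$ and eigenvalues $\mu_i\ge0$ (possible as $C\in\mathcal S_n^+$). Then by linearity and the cyclic property of the trace, $\tr(BC)=\sum_i\mu_i\,\vect u_i^T B\vect u_i\le\lambda_{\max}(B)\sum_i\mu_i=\lambda_{\max}(B)\,\tr(C)$, where I use $\vect u_i^T B\vect u_i\le\lambda_{\max}(B)$ together with $\mu_i\ge0$. Alternatively, (c) can simply be quoted from Lemma~B.4(d) in \cite{meier}.

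None of these steps presents a genuine obstacle; the only point requiring care is the direction-reversal in (a), where one must invoke strict positivity of $\matr B$ so that multiplying the inequality $\|\vect w\|^2=\vect v^T\matr A^T\matr A\vect v\ge\lambda_{\min}(\matr A^T\matr A)$ by the nonnegative factor $\lambda_{\min}(\matr B)$ preserves its direction.
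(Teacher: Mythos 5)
Your proposal is correct and follows essentially the same route as the paper: parts (a) and (b) via the Courant--Fischer/Rayleigh-quotient characterization with the substitution $\vect w=\matr A\vect v$, and part (c) by the standard spectral-decomposition argument that the paper simply cites from the reference. The only cosmetic slip is calling the maximized quantity an ``integrand''; the mathematics is fine.
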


The following result
bounds the KL terms between multivariate normals from above in terms of distances
of the respective matrices and mean vectors (a more detailed proof can be found in ~\cite{meier}, Lemma 9.2). \begin{Lemma}\label{lemma:KLhelpRealValued}
Let~$\vect \mu_0,\vect \mu_1 \in \mathbb R^d$ and~$\matr \Sigma_0,\matr \Sigma_1 \in \mathcal S_d^+(\mathbb R)$,
where~$\mathcal S_d^+(\mathbb R)$ denotes the cone of symmetric positive definite
matrices in~$\mathbb R^{d\times d}$,
with~$\lambda_{\min}(\matr \Sigma_i)\geq\tau_0$ and~$\lambda_{\max}(\matr \Sigma_i)\leq\tau_1$
for~$i=0,1$ and some positive constants~$\tau_0,\tau_1$.
Assume that~$\lambda_{\min}(\matr\Sigma_1^{-1/2}\matr\Sigma_0 \matr\Sigma_1^{-1/2})\geq\frac{1}{2}$.
Let~$p_i$ denote the density of the~$N_d(\vect \mu_i,\matr \Sigma_i)$ distribution for~$i=0,1$.
Denote by
\[
  K:=\int_{\mathbb R^d} \log\frac{p_0(\vect z)}{p_1(\vect z)}p_0(\vect z)d\vect z, \quad
  V:=\int_{\mathbb R^d} \left( \log\frac{p_0(\vect z)}{p_1(\vect z)} - K \right)^2p_0(\vect z)d\vect z
\]
the Kullback-Leibler divergence and associated variance term from~$p_0$ to~$p_1$.
Then it holds
\begin{align*}
  K \lleq \| \matr \Sigma_0-\matr \Sigma_1 \|_F^2 + \|\vect \mu_0-\vect \mu_1\|^2
\end{align*}
and
\begin{align*}  V \lleq \| \matr \Sigma_0-\matr \Sigma_1 \|_F^2   + \|\vect \mu_0-\vect \mu_1\|^2
\end{align*}
with proportionality constants only depending on~$\tau_0$ and $\tau_1$.
\begin{proof}
Some calculations yield for~$\vect z\in\mathbb R^d$
\begin{align}  2\log\frac{p_0(z)}{p_1(z)} 
  &= \left[\log|\matr \Sigma_1| - \log|\matr \Sigma_0| + (\vect z-\vect \mu_0)^T \left(\matr \Sigma_1^{-1}-\matr \Sigma_0^{-1}\right) (\vect z-\vect \mu_0)\right] \notag\\
  &\quad     
    + 2(\vect \mu_0-\vect \mu_1)^T \matr \Sigma_1^{-1} (\vect z-\vect \mu_0) 
    + (\vect \mu_0-\vect \mu_1)^T \matr \Sigma_1^{-1} (\vect \mu_0-\vect \mu_1)\notag\\
    &=:l_1(\vect z)+2\, l_2(\vect z) + (\vect \mu_0-\vect \mu_1)^T \matr \Sigma_1^{-1} (\vect \mu_0-\vect \mu_1).
   \label{eq:KLrealMultiHelp}
\end{align}
Since~$l_1(\vect z)/2$  is the log-likelihood-ratio between an~$N_d(\vect \mu_0,\matr\Sigma_0)$ and an~$N_d(\vect \mu_0,\matr\Sigma_1)$ distribution, we conclude (see e.g.\ Lemma~7.5 in \cite{meier}) for the corresponding Kullback-Leibler divergence
\[
  \int l_1(\vect z)p_0(\vect z)dz 
  \leq \frac{2}{\lambda_{\min}(\matr\Sigma_1)^{2}}\|\matr \Sigma_0-\matr \Sigma_1\|_F^2
  \lleq \|\matr \Sigma_0-\matr \Sigma_1\|_F^2.
\]
By $\int l_2(\vect z)p_0(\vect z)d\vect z=0$ and $(\vect \mu_0-\vect \mu_1)^T \matr \Sigma_1^{-1} (\vect \mu_0-\vect \mu_1)\lleq\| \vect \mu_0-\vect \mu_1 \|^2$, the first assertion follows.

\par
For the second assertion, let~$\vect Z \distr N_d(\vect 0,\matr\Sigma_0)$.
Then~$V=\Var[\log\frac{p_0(\vect Z+\mu_0)}{p_1(\vect Z+\mu_0)}]$ and from~\eqref{eq:KLrealMultiHelp} this yields \[
  4V = \Var \left[ \vect Z^T\left( \matr \Sigma_1^{-1}-\matr \Sigma_0^{-1}\right)\vect Z + 2(\vect \mu_0-\vect \mu_1)^T\matr \Sigma_1^{-1}\vect Z  \right]
    =: \Var \left[ \eta_1+2\eta_2 \right].
\]
$\Var[\eta_1]$ is the KL variance term  (up to a multiplicative constant)
between an~$N_d(\vect 0,\matr\Sigma_0)$ and an~$N_d(\vect 0,\matr\Sigma_1)$
distribution, so that (see e.g.\ Lemma~7.5 in \cite{meier})
\[
  \Var[\eta_1]
  \lleq \frac{1}{\lambda_{\min}(\matr\Sigma_1)^{2}}\|\matr \Sigma_0-\matr \Sigma_1\|_F^2
  \lleq \|\matr \Sigma_0-\matr \Sigma_1\|_F^2.
\]
Furthermore,
\begin{align*} 
 & \Var[\eta_2]
  = \Var\left[ (\vect \mu_0-\vect \mu_1)^T\matr \Sigma_1^{-1}\vect Z \right]
  = \E \left[ (\vect \mu_0-\vect \mu_1)^T\matr \Sigma_1^{-1}\vect Z\vect Z^T\matr \Sigma_1^{-1}(\vect \mu_0-\vect \mu_1) \right]  \\
  &=  (\vect \mu_0-\vect \mu_1)^T\matr \Sigma_1^{-1}\matr \Sigma_0\matr \Sigma_1^{-1}(\vect \mu_0-\vect \mu_1)\leq \lambda_{\max}\left( \matr \Sigma_1^{-1}\matr \Sigma_0\matr \Sigma_1^{-1} \right) \|\vect \mu_0-\vect \mu_1\|^2\lleq \|\vect \mu_0-\vect \mu_1\|^2.
\end{align*}
The assertion follows by $|\Cov(\eta_1,\eta_2)|\le \max(\Var(\eta_1),\Var(\eta_2))$ by an application of the Cauchy-Schwarz inequality.\end{proof}
\end{Lemma}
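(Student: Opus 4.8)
The plan is to write the log-density-ratio of the two Gaussians explicitly and split it into a piece depending only on the covariances and a piece carrying the mean difference. First I would expand $2\log(p_0(\vect z)/p_1(\vect z))$ and, substituting $\vect z-\vect\mu_1=(\vect z-\vect\mu_0)+(\vect\mu_0-\vect\mu_1)$, rearrange it into
\[
2\log\frac{p_0(\vect z)}{p_1(\vect z)} = l_1(\vect z) + 2\,l_2(\vect z) + (\vect\mu_0-\vect\mu_1)^T\matr\Sigma_1^{-1}(\vect\mu_0-\vect\mu_1),
\]
where $l_1(\vect z)=\log|\matr\Sigma_1|-\log|\matr\Sigma_0|+(\vect z-\vect\mu_0)^T(\matr\Sigma_1^{-1}-\matr\Sigma_0^{-1})(\vect z-\vect\mu_0)$ is (twice) the log-likelihood ratio between $N_d(\vect\mu_0,\matr\Sigma_0)$ and $N_d(\vect\mu_0,\matr\Sigma_1)$, i.e.\ two normals with a common mean, and $l_2(\vect z)=(\vect\mu_0-\vect\mu_1)^T\matr\Sigma_1^{-1}(\vect z-\vect\mu_0)$ is linear in $\vect z$. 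This separation is what lets me treat the covariance and mean contributions by different tools.

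For the Kullback--Leibler term $K$ I would integrate against $p_0$. Because $\E_{p_0}[\vect z-\vect\mu_0]=\vect 0$, the cross term $l_2$ integrates to zero, and the covariance-only part reduces to the KL divergence between two equal-mean normals. The decisive estimate $\int l_1\,p_0\lleq\|\matr\Sigma_0-\matr\Sigma_1\|_F^2$ is exactly the equal-mean case, which I would quote from an existing bound (e.g.\ Lemma~7.5 in~\cite{meier}); this is where the hypothesis $\lambda_{\min}(\matr\Sigma_1^{-1/2}\matr\Sigma_0\matr\Sigma_1^{-1/2})\geq 1/2$ is used, since it keeps the eigenvalues of $\matr\Sigma_1^{-1}\matr\Sigma_0$ bounded away from zero so that $\lambda-1-\log\lambda=O((\lambda-1)^2)$. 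The residual mean term is controlled by $(\vect\mu_0-\vect\mu_1)^T\matr\Sigma_1^{-1}(\vect\mu_0-\vect\mu_1)\leq\lambda_{\min}(\matr\Sigma_1)^{-1}\|\vect\mu_0-\vect\mu_1\|^2\lleq\|\vect\mu_0-\vect\mu_1\|^2$ via $\lambda_{\min}(\matr\Sigma_1)\geq\tau_0$, yielding the first assertion.

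For the variance term $V$ I would take variances under $\vect Z\sim N_d(\vect 0,\matr\Sigma_0)$; the constant mean term drops out, leaving $4V=\Var[l_1+2\,l_2]$. Here $\Var[l_1]$ is, up to constants, the KL variance term between two equal-mean normals, again bounded by $\|\matr\Sigma_0-\matr\Sigma_1\|_F^2$ through the same imported lemma. The term $\Var[l_2]$ I would compute directly as $(\vect\mu_0-\vect\mu_1)^T\matr\Sigma_1^{-1}\matr\Sigma_0\matr\Sigma_1^{-1}(\vect\mu_0-\vect\mu_1)\leq\lambda_{\max}(\matr\Sigma_1^{-1}\matr\Sigma_0\matr\Sigma_1^{-1})\|\vect\mu_0-\vect\mu_1\|^2\lleq\|\vect\mu_0-\vect\mu_1\|^2$, using the eigenvalue bounds. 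The interaction is disposed of by Cauchy--Schwarz, $|\Cov(l_1,l_2)|\leq\sqrt{\Var[l_1]\Var[l_2]}\leq\max(\Var[l_1],\Var[l_2])$, so no new estimate is required; summing the three pieces gives the bound on $V$.

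The main obstacle is the equal-mean covariance estimate $\int l_1\,p_0\lleq\|\matr\Sigma_0-\matr\Sigma_1\|_F^2$ together with its variance analogue: one must show that the trace-log expression $\tr(\matr\Sigma_1^{-1}\matr\Sigma_0)-d+\log(|\matr\Sigma_1|/|\matr\Sigma_0|)$ is \emph{quadratic}, rather than merely linear, in the covariance discrepancy. This is precisely where the lower bound on $\lambda_{\min}(\matr\Sigma_1^{-1/2}\matr\Sigma_0\matr\Sigma_1^{-1/2})$ is indispensable, preventing the logarithmic term from degenerating; it is the only genuinely analytic input, and I would import it from the equal-mean lemma, leaving everything else as routine bookkeeping of the mean-difference terms.
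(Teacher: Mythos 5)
Your proposal is correct and follows essentially the same route as the paper: the identical decomposition $2\log(p_0/p_1)=l_1+2l_2+(\vect\mu_0-\vect\mu_1)^T\matr\Sigma_1^{-1}(\vect\mu_0-\vect\mu_1)$, the same reduction of the covariance part to the equal-mean KL bound imported from Lemma~7.5 of the cited reference, the same direct computation of $\Var[l_2]$, and the same Cauchy--Schwarz treatment of the cross-covariance. Your added remark on where the hypothesis $\lambda_{\min}(\matr\Sigma_1^{-1/2}\matr\Sigma_0\matr\Sigma_1^{-1/2})\geq\tfrac12$ enters is accurate but does not change the argument.
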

The following result bounds the Hellinger distance between two  normals.
A corresponding upper bound for multivariate normals can be found in Lemma~9.4 of \cite{meier} while in this situation the lower bound can be obtained similarly to Lemma~7.16 in \cite{meierPaper}.

\begin{Lemma}\label{lemma:hellingerBoundRealValuedSemiparametric}
Let~$\mu_1,\mu_2 \in \mathbb R$ and~$0<\tau_0\le \sigma_i^2\le \tau_1<\infty$, $i=1,2$, for some positive constants~$\tau_0,\tau_1$.
Let~$p_i$ denote the density of the~$N(\mu_i,\sigma_i^2)$ distribution for~$i=1,2$.
Then it holds
\begin{align*}
	|\sigma_1^2-\sigma_2^2|^2+ 1-\exp\left(-c|\mu_1-\mu_2|^2\right)
\lleq
d_H^2(p_1,p_2) \lleq |\sigma_1^2-\sigma_2^2| + |\mu_1-\mu_2|
\end{align*}
with $c>0$ and proportionality constants depending only on~$\tau_0,\tau_1$.
\begin{proof}
We start by noting the representation (see~\cite{pardo05}, p.51 and 46)
\begin{align}
  d_H^2(p_1,p_2) 
  &= 1 - \sqrt{\frac{2\sigma_1\sigma_2}{\sigma_1^2+\sigma_2^2}}\,\exp\left( -\frac{1}{4}\,\frac{(\mu_1-\mu_2)^2}{\sigma_1^2+\sigma_2^2} \right)\notag\\
  &=\left(1-\sqrt{\frac{2\sigma_1\sigma_2}{\sigma_1^2+\sigma_2^2}}\right)+\sqrt{\frac{2\sigma_1\sigma_2}{\sigma_1^2+\sigma_2^2}}\left( 1-\exp\left( -\frac{1}{4}\,\frac{(\mu_1-\mu_2)^2}{\sigma_1^2+\sigma_2^2} \right)\right).\label{eq_decomp_Hellinger}
\end{align}
For the first summand we obtain the upper bound
\begin{align*}
	&0\le 1-\sqrt{\frac{2\sigma_1\sigma_2}{\sigma_1^2+\sigma_2^2}}\le \sqrt{\frac{(\sigma_1-\sigma_2)^2}{\sigma_1^2+\sigma_2^2}}\le \frac{1}{\sqrt{\sigma_1^2+\sigma_2^2}}\,\frac{1}{2 \sqrt{\tau_0}}\,|\sigma_1^2-\sigma_2^2|\lleq |\sigma_1^2-\sigma_2^2|,
\end{align*}
where in the second to last step we used the mean value theorem to bound $|\sqrt{x_1}-\sqrt{x_2}|$  (with $x_i=\sigma_i^2$).
For the second term we get 
\begin{align*}
	&\sqrt{\frac{2\sigma_1\sigma_2}{\sigma_1^2+\sigma_2^2}}\left( 1-\exp\left( -\frac{1}{4}\,\frac{(\mu_1-\mu_2)^2}{\sigma_1^2+\sigma_2^2} \right)\right)\lleq 1-\exp\left( -\frac{1}{4}\,\frac{(\mu_1-\mu_2)^2}{\sigma_1^2+\sigma_2^2}\right) \\
	&\leq 1-\exp\left( -\frac{1}{8\,\tau_0}\,(\mu_1-\mu_2)^2\right) \lleq |\mu_1-\mu_2|,
\end{align*}
because
the function~$h_c$ with $h_c(x) := 1-\exp(-cx^2)$
has points of inflection at~$x^*=\pm (2c)^{-1/2}$, where the maximal derivative
is attained, with value~$h_c'(x^*)=\sqrt{2c}\exp(-1/2)$. This completes the proof of the upper bound.
\par

Because $1-\sqrt{x}\geq (1-x)/2$ we get the lower bound for the first summand in \eqref{eq_decomp_Hellinger}
\begin{align*}
	&1-\sqrt{\frac{2\sigma_1\sigma_2}{\sigma_1^2+\sigma_2^2}}\geq \frac 1 2 \,\frac{(\sigma_1-\sigma_2)^2}{\sigma_1^2+\sigma_2^2}\ggeq |\sigma_1^2-\sigma_2^2|^2,
\end{align*}
where, in the last step,  we used again the mean value theorem to bound $|\sqrt{x_1}-\sqrt{x_2}|$  (with $x_i=\sigma_i^2$) this time from below. The lower bound for the second summand in \eqref{eq_decomp_Hellinger} follows straightforwardly from 
$\tau_0\le \sigma_i^2\le \tau_1$.
\end{proof}
\end{Lemma}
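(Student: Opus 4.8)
The plan is to begin from the exact closed-form expression for the squared Hellinger distance between two univariate Gaussians,
\[
d_H^2(p_1,p_2)=1-\sqrt{\frac{2\sigma_1\sigma_2}{\sigma_1^2+\sigma_2^2}}\,\exp\left(-\frac14\,\frac{(\mu_1-\mu_2)^2}{\sigma_1^2+\sigma_2^2}\right),
\]
which I would either recall as standard or derive by completing the square in the affinity integral $\int\sqrt{p_1(z)p_2(z)}\,dz$. Writing the subtracted product as $ab$ with $a:=\sqrt{2\sigma_1\sigma_2/(\sigma_1^2+\sigma_2^2)}$ and $b$ the exponential factor, and using $1-ab=(1-a)+a(1-b)$, the distance splits into a purely variance-driven summand $1-a$ and a mean-driven summand $a(1-b)$. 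I would then bound each summand separately, from above and from below, which is exactly what the two-sided claim requires.

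For the variance summand the key identity is $a^2=1-(\sigma_1-\sigma_2)^2/(\sigma_1^2+\sigma_2^2)$. Setting $x:=a^2\in(0,1]$, the elementary inequalities $1-\sqrt x\le\sqrt{1-x}$ and $1-\sqrt x\ge(1-x)/2$ yield respectively an upper bound proportional to $|\sigma_1-\sigma_2|/\sqrt{\sigma_1^2+\sigma_2^2}$ and a lower bound proportional to $(\sigma_1-\sigma_2)^2/(\sigma_1^2+\sigma_2^2)$. Converting $|\sigma_1-\sigma_2|$ into $|\sigma_1^2-\sigma_2^2|$ by the mean value theorem and using $\tau_0\le\sigma_i^2\le\tau_1$ to control both the intermediate point and the denominator then gives $1-a\lleq|\sigma_1^2-\sigma_2^2|$ and $1-a\ggeq|\sigma_1^2-\sigma_2^2|^2$, with constants depending only on $\tau_0,\tau_1$. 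The differing powers here are precisely the two exponents on the two sides of the lemma.

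For the mean summand I would use $a\le1$ (AM--GM) in the upper bound and $a\ggeq1$ (from $\sigma_1\sigma_2\ge\tau_0$ and $\sigma_1^2+\sigma_2^2\le2\tau_1$) in the lower bound, so that up to constants the mean contribution equals $1-b$. Inserting $\sigma_1^2+\sigma_2^2\le2\tau_1$ into the exponent produces the lower bound $1-b\ge1-\exp(-c|\mu_1-\mu_2|^2)$ with $c=1/(8\tau_1)$, already in the desired form. For the upper bound I would observe that $h_c(t):=1-\exp(-ct^2)$ is globally Lipschitz, its derivative being maximised at the inflection points $t=\pm(2c)^{-1/2}$ with value $\sqrt{2c}\,e^{-1/2}$; since $h_c(0)=0$, the mean value theorem gives $1-b\le h_{1/(8\tau_0)}(|\mu_1-\mu_2|)\lleq|\mu_1-\mu_2|$ after bounding the exponent's denominator below by $2\tau_0$.

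I do not expect a genuine obstacle: once the closed form is in hand, the argument reduces entirely to elementary scalar inequalities. The only point demanding care is the asymmetry between the two sides — linear in $|\sigma_1^2-\sigma_2^2|$ and $|\mu_1-\mu_2|$ from above, but quadratic in $|\sigma_1^2-\sigma_2^2|$ and of the saturating form $1-e^{-c|\mu_1-\mu_2|^2}$ from below. This is not an artefact to be removed but an intrinsic feature of the Hellinger distance, which behaves like a squared metric locally yet is bounded by $1$; in particular the mean contribution must be kept in the form $1-e^{-c(\cdot)^2}$ in the lower bound rather than linearised. Tracking which elementary inequality produces which exponent, and checking at every step that the implied constants depend only on $\tau_0$ and $\tau_1$, is the bulk of the work.
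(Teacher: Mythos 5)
Your proposal is correct and follows essentially the same route as the paper: the closed-form Hellinger affinity for univariate Gaussians, the decomposition $1-ab=(1-a)+a(1-b)$, the elementary inequalities $1-\sqrt{x}\le\sqrt{1-x}$ and $1-\sqrt{x}\ge(1-x)/2$ for the variance summand with the mean value theorem converting $|\sigma_1-\sigma_2|$ to $|\sigma_1^2-\sigma_2^2|$, and the Lipschitz bound on $t\mapsto 1-\exp(-ct^2)$ via its inflection points for the mean summand. All constants are tracked correctly in terms of $\tau_0,\tau_1$, so nothing needs to be added.
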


The previous lower bound carries over to a lower bound
for the average squared Hellinger distance~$d_{n,H}^2((\vect\theta, f),(\vect\theta_0, f_0))$ (see~\eqref{eq:hellingerDefSemi}), 
as summarized in the following corollary.
\begin{Corollary}\label{lemma:hellingerFromBelowBvm}
Let~$ f \in \mathcal C_{\tau_0,\tau_1}$ with~$\mathcal C_{\tau_0,\tau_1}$
from~\eqref{eq:truncationSetForBvm}. Then it holds
\[
  d_{n,H}^2((\vect\theta, f),(\vect\theta_0, f_0))
  \ggeq \| f- f_0\|^3_{\infty} + \sum_{j=1}^n\frac{1-\exp\left(-c|(\matrt X_n(\vect\theta-\vect\theta_0))_j|^2\right)}{n} +O\left( \frac 1 n \right).
\]
\begin{proof}
The proof is analogous to the proof of Lemma~7.17 in \cite{meierPaper},
using the lower bound from Lemma~\ref{lemma:hellingerBoundRealValuedSemiparametric}.
\end{proof}
\end{Corollary}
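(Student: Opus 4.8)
The plan is to reduce the statement to a coordinatewise application of the lower bound in Lemma~\ref{lemma:hellingerBoundRealValuedSemiparametric}, followed by a Lipschitz-plus-counting argument that produces the cubic power of the sup-norm. First I would record that, under the Whittle likelihood, the diagonal structure of $\matr D_n(f)$ renders the Fourier coefficients independent, so that the $j$-th coordinate density $p_j(\cdot|\vect\theta,f)$ is that of a univariate $N\big((\matrt X_n\vect\theta)_j,(\matr D_n(f))_{jj}\big)$, where $(\matr D_n(f))_{jj}=2\pi f(\omega_j)$ for the corresponding Fourier frequency $\omega_j$. Since $f_0$ and $f\in\mathcal C_{\tau_0,\tau_1}$ are both bounded in $[\tau_0,\tau_1]$ away from $0$ and $\infty$, these variances lie in $[2\pi\tau_0,2\pi\tau_1]$, so the hypotheses of Lemma~\ref{lemma:hellingerBoundRealValuedSemiparametric} hold with the roles $\sigma_1^2\mapsto 2\pi f_0(\omega_j)$, $\sigma_2^2\mapsto 2\pi f(\omega_j)$, and $\mu_1-\mu_2\mapsto(\matrt X_n(\vect\theta-\vect\theta_0))_j$.

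Applying that lower bound coordinatewise and averaging over $j$ then gives
\[
  d_{n,H}^2\big((\vect\theta,f),(\vect\theta_0,f_0)\big)
  \ggeq \frac{1}{n}\sum_{j=1}^n \big|2\pi f_0(\omega_j)-2\pi f(\omega_j)\big|^2
  + \frac{1}{n}\sum_{j=1}^n\Big(1-\exp\big(-c\,|(\matrt X_n(\vect\theta-\vect\theta_0))_j|^2\big)\Big),
\]
where the second summand is precisely the mean contribution appearing in the assertion. It therefore remains to show that the first summand, which equals $\tfrac{(2\pi)^2}{n}\sum_j|g(\omega_j)|^2$ with $g:=f-f_0$ (interior frequencies entering twice, the boundary frequencies $0$ and, for even $n$, $\pi$ entering once), is bounded below by $\|g\|_\infty^3+O(1/n)$.

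For this cubic bound I would reason as follows. Both $f$ and $f_0$ are Lipschitz with constant bounded by a multiple of $\tau_1$ (using $\|f\|_L\le\tau_1$ together with the differentiability of $f_0$ from Assumption~\ref{ass:fModel}), so $g$ has a bounded Lipschitz constant $L$. Picking $\omega^*$ with $|g(\omega^*)|=\|g\|_\infty=:m$, Lipschitz continuity forces $|g(\omega)|\ge m/2$ on an interval about $\omega^*$ of half-width $m/(2L)$; as consecutive Fourier frequencies are spaced $2\pi/n$ apart in $[0,\pi]$, that interval contains at least $\tfrac{mn}{4\pi L}-1$ of them (the half-width being taken to absorb the case where $\omega^*$ sits at a boundary). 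On those frequencies $|g(\omega_j)|^2\ge m^2/4$, whence $\tfrac{1}{n}\sum_j|g(\omega_j)|^2\ge c_1 m^3-c_2 m^2/n$ for positive constants $c_1,c_2$. Because $m\le 2\tau_1$ is bounded, the correction $c_2m^2/n$ is $O(1/n)$; and in the residual regime $m<4\pi L/n$, where the near-maximal interval may contain no frequency, one simply notes that $m^3=O(1/n^3)$ while the left-hand side is nonnegative, so the claimed inequality persists after everything is absorbed into the $O(1/n)$ term. The main obstacle is exactly this last step: the power $3$ rather than $2$ is what must be extracted, and it arises from combining one factor of $m$ coming from the \emph{width} of the near-maximal interval with the two factors coming from the squared magnitude; care is needed with the boundary frequencies and especially with the small-$m$ regime, so that the discretization (Riemann-sum) error is genuinely controlled at order $1/n$.
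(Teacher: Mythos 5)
Your proposal is correct and follows essentially the same route as the paper, which simply defers to Lemma~7.17 of \cite{meierPaper}: a coordinatewise application of the univariate Hellinger lower bound of Lemma~\ref{lemma:hellingerBoundRealValuedSemiparametric} to the independent Fourier coefficients, followed by the Lipschitz/frequency-counting argument that converts the averaged squared deviations at the Fourier frequencies into the $\|f-f_0\|_\infty^3+O(1/n)$ bound. Your treatment of the boundary and small-$\|f-f_0\|_\infty$ regimes fills in exactly the details that the cited proof handles.
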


\begin{Lemma}\label{lemma:uniform_convergence}
Denote $(\Omega, \mathcal{F}, P)$ a probability space in which all the random elements considered below are defined. $E$ is the corresponding expectation. 

Let $\matr W_n(\cdot)=(w_{i,j;n}(\cdot))_{i=1,\cdots,r;j=1\cdots,n} $ be an $r\times n$ matrix, where $r\in\mathbb{N}$ is a constant and $w_{i,j;n}:\mathcal{H}\to\mathbb{R}$ with $\mathcal{H}=\mathcal{C}_{\tau_0,\tau_1}$ defined in \eqref{eq:truncationSetForBvm}. Suppose for each $n\in\mathbb{N}$, $\vect\xi_{n}=(\xi_{1;n},\cdots,\xi_{n;n})^{T}\sim N(\vect 0,\matr\Sigma_n)$, where $\matr\Sigma_n=\diag(\sigma_{1;n}^2,\cdots,\sigma_{n;n}^2)$ with $\sigma_{i;n}^2\leq M$ for any $i=1,\cdots,n$, $n\in\mathbb{N}$ and $M>0$ is a constant. If
\begin{itemize}
\item[(i) ] $\|\matr W_n(f) - \matr W_n(g)\|_{F}\leq A_{n}\|f-g\|_{\infty}$ for any $f,g\in\mathcal{H}$;
\item[(ii) ] $A_n=o\left( \sqrt{\log n} \right)$ when $n\to\infty$.
\end{itemize}
Then we have
\begin{align*}
\sup_{f\in\mathcal{H}_{n}}\|\matr W_n(f)\vect\xi_{n}-\matr W_n(f_0)\vect\xi_{n}\|\to0
\end{align*}
in $P$-probability, where $\mathcal{H}_n$ is defined in \eqref{eq:ThetaNHNDefH}.
\begin{proof}
Denote $Q_{i;n}(\cdot)=\sum_{j=1}^{n}w_{i,j;n}(\cdot)\xi_{j;n}$, $i=1,\cdots,r$. Due to the following equality
\begin{align*}
\|\matr W_n(f)\vect\xi_{n}-\matr W_n(f_0)\vect\xi_{n}\|^2=\sum_{i=1}^r (Q_{i;n}(f) - Q_{i;n}(f_0))^2,
\end{align*}
we only need to prove $E\sup_{f\in\mathcal{H}_{n}}|Q_{i;n}(f) - Q_{i;n}(f_0)|\to0$ for each $i=1,\cdots,r$. It is easily seen that $Q_{i;n}(f)$ is a centered Gaussian process indexed by $f\in\mathcal{H}$. We can always choose $Q_{i;n}$ to be a separable version in view of the fact that $(\mathcal{H},\|\cdot\|_{\infty})$ is compact (cf. Section 2.3.3 of \cite{van_der_vaart_and_wellner1996})). Denote $\rho_{i;n}(f,g)=\sqrt{E(Q_{i;n}(f) - Q_{i;n}(g))^2}$ the canonical metric induced by $Q_{i;n}$. Simple calculation yields
\begin{align*}
\rho_{i;n}(f,g)=\sqrt{\sum_{j=1}^n (w_{i,j;n}(f) - w_{i,j;n}(g))^2\sigma_{i;n}^2}\leq \sqrt{M}A_n\|f-g\|_{\infty}
\end{align*}
for any $f,g\in\mathcal{H}$, where the inequality is due to the boundedness of $\sigma_{i,n}^2$ and condition (i) stated above.

By using Dudley's bound (cf. Corollary 2.2.8 of \cite{van_der_vaart_and_wellner1996}) and the estimate of entropy number $N(\cdot,\cdot,\cdot)$ (see e.g. Theorem 2.7.1 of \cite{van_der_vaart_and_wellner1996}), we get
{\allowdisplaybreaks\begin{align*}
&E\sup_{f\in\mathcal{H}_{n}}|Q_{i;n}(f)-Q_{i;n}(f_0)| 
\leq E\sup_{\rho_{i;n}(f,g)\leq\sqrt{M}A_n(\log n)^{-1}}|Q_{i;n}(f)-Q_{i;n}(g)|\\
&\lleq \int_0^{\sqrt{M}A_n(\log n)^{-1}}\log^{\frac{1}{2}} N(\epsilon/2,\mathcal{H},\rho_{i;n})d\epsilon  \\
&
\leq \int_0^{\sqrt{M}A_n(\log n)^{-1}}\log^{\frac{1}{2}} N((2\sqrt{M}A_n)^{-1}\epsilon,\mathcal{H},\|\cdot\|_{\infty})d\epsilon  \\
&\lleq A_n\int_0^{(2\log n)^{-1}}\log^{\frac{1}{2}} N(\epsilon,\mathcal{H},\|\cdot\|_{\infty})d\epsilon \\
&
\lleq A_n\int_0^{(2\log n)^{-1}} \epsilon^{-\frac{1}{2}}d\epsilon
\lleq A_n(\log n)^{-\frac{1}{2}}=o(1),
\end{align*}}
where the last equality is due to condition (ii) stated above.
\end{proof}
\end{Lemma}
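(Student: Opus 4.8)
The plan is to handle each of the $r$ rows of $\matr W_n$ separately and to control the resulting centered Gaussian processes by Dudley's entropy bound. Writing $Q_{i;n}(f):=\sum_{j=1}^n w_{i,j;n}(f)\,\xi_{j;n}$ for $i=1,\dots,r$, the identity $\|\matr W_n(f)\vect\xi_{n}-\matr W_n(f_0)\vect\xi_{n}\|^2=\sum_{i=1}^r\bigl(Q_{i;n}(f)-Q_{i;n}(f_0)\bigr)^2$ together with the fact that $r$ is a fixed constant reduces the claim to showing $\E\sup_{f\in\mathcal H_n}|Q_{i;n}(f)-Q_{i;n}(f_0)|\to 0$ for each fixed $i$; convergence of these nonnegative expectations then yields the asserted convergence in $P$-probability. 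For fixed $n$ and $i$, the map $f\mapsto Q_{i;n}(f)$ is a centered Gaussian process indexed by $\mathcal H$, which admits a separable version because $(\mathcal H,\|\cdot\|_\infty)=(\mathcal C_{\tau_0,\tau_1},\|\cdot\|_\infty)$ is compact by Arzel\`a--Ascoli (its members are uniformly bounded and equi-Lipschitz).

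The first technical step is to compute the canonical (intrinsic) metric of this process and to compare it to the sup-norm. Since $\matr\Sigma_n$ is diagonal, the increments are linear combinations of independent Gaussians, so $\rho_{i;n}(f,g):=\bigl(\E(Q_{i;n}(f)-Q_{i;n}(g))^2\bigr)^{1/2}=\bigl(\sum_{j=1}^n(w_{i,j;n}(f)-w_{i,j;n}(g))^2\sigma_{j;n}^2\bigr)^{1/2}$. Bounding each $\sigma_{j;n}^2$ by $M$ and recognising the resulting single-row sum as at most the squared Frobenius increment $\|\matr W_n(f)-\matr W_n(g)\|_F^2$, condition~(i) gives the key comparison $\rho_{i;n}(f,g)\le\sqrt M\,A_n\,\|f-g\|_\infty$, valid on all of $\mathcal H$.

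With this comparison in hand I would apply Dudley's entropy integral bound (as in Corollary~2.2.8 of~\cite{van_der_vaart_and_wellner1996}) to majorise $\E\sup_{f\in\mathcal H_n}|Q_{i;n}(f)-Q_{i;n}(f_0)|$ by $\int_0^{\mathrm{diam}}\log^{1/2}N(\epsilon,\mathcal H,\rho_{i;n})\,d\epsilon$, noting that on $\mathcal H_n$ from~\eqref{eq:ThetaNHNDefH} one has $\|f-f_0\|_\infty\le 1/\log n$, so the relevant $\rho_{i;n}$-diameter is at most $\sqrt M\,A_n/\log n$. The Lipschitz comparison converts $\rho_{i;n}$-covering numbers into sup-norm covering numbers, after which the standard metric-entropy estimate for the equi-Lipschitz class, $\log N(\epsilon,\mathcal H,\|\cdot\|_\infty)\lleq\epsilon^{-1}$ (cf.\ Theorem~2.7.1 of~\cite{van_der_vaart_and_wellner1996}), applies. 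A change of variables $u=\epsilon/(2\sqrt M A_n)$ then collapses the integral to $\lleq A_n\int_0^{(2\log n)^{-1}}u^{-1/2}\,du\lleq A_n(\log n)^{-1/2}$.

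The main obstacle is the delicate trade-off between the possibly diverging Lipschitz constant $A_n$ and the shrinking radius $1/\log n$ of $\mathcal H_n$: the supremum runs over a class whose intrinsic diameter is of order $A_n/\log n$, and whether the entropy integral vanishes hinges entirely on how the factor $(\log n)^{-1/2}$ gained from integrating the square-root entropy $u^{-1/2}$ over $[0,(2\log n)^{-1}]$ competes with $A_n$. Condition~(ii), $A_n=o(\sqrt{\log n})$, is calibrated precisely so that $A_n(\log n)^{-1/2}=o(1)$; the crux is to carry out the $\rho_{i;n}$-to-$\|\cdot\|_\infty$ entropy conversion and the change of variables so as to extract exactly this cancellation, whence the expected suprema vanish and the conclusion follows.
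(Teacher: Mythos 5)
Your proposal is correct and follows essentially the same route as the paper's proof: row-wise reduction to centered Gaussian processes, the canonical-metric comparison $\rho_{i;n}(f,g)\le\sqrt M A_n\|f-g\|_\infty$ via condition (i), Dudley's entropy bound over the $\rho_{i;n}$-ball of radius $\sqrt M A_n/\log n$ induced by $\mathcal H_n$, the entropy conversion to sup-norm covering numbers with $\log N(\epsilon,\mathcal H,\|\cdot\|_\infty)\lleq\epsilon^{-1}$, and the final bound $A_n(\log n)^{-1/2}=o(1)$ from condition (ii). (You even write the variances inside the canonical metric with the correct index $\sigma_{j;n}^2$, where the paper's displayed formula has a harmless index typo.)
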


\end{document}